\numberwithin{equation}{section}
\newcommand{\R}{\mathbb R}
\newcommand{\cE}{{\mathcal E}}
\newcommand{\N}{\mathbb{N}} 
\newcommand{\cN}{\mathcal{N}} 
\newcommand{\norme}[1]{\left\lVert#1\right\rVert}
\newcommand{\ov}[1]{\overline{#1}}
\newcommand{\ds}{2_s^\star}
\newcommand{\dsa}[1]{2_{#1}^\star}
\newcommand{\pv}{\textnormal{p.v.}}
\newcommand{\weakly}{\rightharpoonup}
\newcommand{\eps}{\varepsilon}
\theoremstyle{plain} 
\newtheorem{prop}{Proposition}[section] 
\newtheorem{theo}[prop]{Theorem}
\newtheorem{lem}[prop]{Lemma}
\theoremstyle{definition}
\def\dx{\,\textnormal{d}x}
 \def\d{\,\textnormal{d}}
\def\esp{{\mathbb{E}}}
\newcommand{\vertiii}[1]{{\left\vert\kern-0.25ex\left\vert\kern-0.25ex\left\vert #1 
    \right\vert\kern-0.25ex\right\vert\kern-0.25ex\right\vert}}
\begin{document}

\title{Existence and convergence of solutions to fractional pure critical exponent problems}
\author{V\'ictor Hern\'andez-Santamar\'ia\thanks{Instituto de Matem\'aticas, Universidad Nacional Aut\'onoma de M\'exico, Circuito Exterior, C.U., C.P. 04510 CDMX, Mexico. E-mails: \texttt{victor.santamaria@im.unam.mx}, \ \texttt{alberto.saldana@im.unam.mx}} \and Alberto Salda\~{n}a\footnotemark[1]}
\date{}

\maketitle
 \begin{abstract}
We study existence and convergence properties of least-energy symmetric solutions (l.e.s.s.) to the pure critical exponent problem
\begin{equation*}
(-\Delta)^su_s=|u_s|^{\ds-2}u_s, \quad u_s\in D^s_0(\Omega),\quad 2^\star_s:=\frac{2N}{N-2s},
\end{equation*}
where $s$ is any positive number, $\Omega$ is either $\R^N$ or a smooth symmetric bounded domain, and $D^s_0(\Omega)$ is the homogeneous Sobolev space. Depending on the kind of symmetry considered, solutions can be sign changing. We show that, up to a subsequence, a l.e.s.s. $u_s$ converges to a l.e.s.s. $u_{t}$ as $s$ goes to any $t>0$. In bounded domains, this convergence can be characterized in terms of an homogeneous fractional norm of order $t-\varepsilon$.  A similar characterization is no longer possible in unbounded domains due to scaling invariance and an incompatibility with the functional spaces; to circumvent these difficulties, we use a suitable rescaling and characterize the convergence via cut-off functions. If $t$ is an integer, these results describe in a precise way the nonlocal-to-local transition. Finally, we also include a nonexistence result of nontrivial nonnegative solutions in a ball for any $s>1$.
\medbreak

\noindent{\bf 2020 MSC} 35B33, 
35B40 
(Primary); 
35R11, 
35J35, 
35J40. 

\noindent{\bf Keywords:} Equivariant solutions, symmetric concentration compactness, higher-order fractional Laplacian, asymptotic analysis
\end{abstract}

\section{Introduction}

In this paper we study existence and convergence properties of solutions to pure critical problems such as
\begin{equation}\label{eq:frac_crit_exp}
(-\Delta)^s u_s=|u_s|^{\ds-2}u_s, \qquad 
u_s\in D_0^s(\Omega),\qquad 2^\star_s:=\frac{2N}{N-2s},
\end{equation}
where $N\geq 1$, $s>0$, $N>2s$, $(-\Delta)^s$ is the (possibly higher-order) fractional Laplacian, $\Omega$ is either $\R^N$ or a smooth bounded domain of $\R^N$, and $D^s_0(\Omega)$ is the homogeneous (fractional) Sobolev space, namely, the closure of $C^\infty_c(\Omega)$ with respect to the Gagliardo seminorm $\|\cdot\|_s$, given by \eqref{def:norm_s} below.  See Section~\ref{sec:prelim} for precise definitions and main properties of the operator $(-\Delta)^s$ and the space $D^s_0(\Omega)$.

Problem \eqref{eq:frac_crit_exp} is an important paradigm in nonlinear analysis of PDEs and plays an important role in the study of the well-known Yamabe problem in differential geometry and its generalizations. Moreover, the fractional Laplacian plays an important role in the study of anomalous and nonlocal diffusion, which appears for instance in continuum mechanics, graph theory, and ecology, see \cite{BV16} and the references therein. 

For $s=1$ there is an extensive literature on existence of solutions of \eqref{eq:frac_crit_exp} using different methods, see, for instance, \cite{GNN79,Ding86,HV94,dPMP11,dPMP13,Clapp16,FP,CSS} and the references therein. When $s\in \mathbb N$ with $s\geq 2$, equation \eqref{eq:frac_crit_exp} is the pure critical exponent problem for the polyharmonic operator and existence of solutions has been studied in~\cite{EFJ90,Gru95,BWW03,GGS03,BSW04,Ge05}.  In the fractional setting, existence results in $\R^N$ for $s\in(0,1)$ are available in~\cite{Lie83,DdPS13,GM16,fang,ABR19,T20}, and for $s>1$ it is known that \eqref{eq:frac_crit_exp} has a family of radially symmetric solutions in $\R^N$, see~\cite{chen}.  

\medskip

The first objective of this paper is to present a unified approach to show existence of solutions of \eqref{eq:frac_crit_exp} for any $s\in(0,\infty)$. For $\Omega$ bounded this is a problem that depends strongly on the geometry of the domain, whereas for $\Omega=\R^N$ all positive solutions of \eqref{eq:frac_crit_exp} are completely characterized and therefore we are particularly interested in nonradial sign-changing entire solutions. The second objective, is to investigate the convergence properties of solutions, namely, if $(u_{s_k})_{k\in\N}$ are solutions of \eqref{eq:frac_crit_exp} (with $s_k\in(0,\infty)$ instead of $s$), then what can be said about the limit of $u_{s_k}$ as $s_k\to s_0>0$. For the critical nonlinearity $f(u)=|u|^{2^\star_s-2}u$, we are not aware of any previous result in this direction. 
  
Although problem \eqref{eq:frac_crit_exp} has a variational structure (with energy functional given by \eqref{eq:func_RN}), variational methods face several compactness issues, mainly due to the following scaling invariance
\begin{align}\label{issues}
\|u\|_s=\|u_{\lambda,\xi}\|_s,\qquad 
\int_{\R^N}|u|^{2^\star_s}=\int_{\R^N}|u_{\lambda,\xi}|^{2^\star_s},\qquad 
u_{\lambda,\xi}(x):=\lambda^{\frac{N}{2}-s}u(\lambda x+\xi),
\end{align}
for $u\in D_0^s(\R^N)$, $\lambda>0$, and $\xi\in\R^N$.

One way to overcome this difficulty, is to search for solutions within a symmetric framework. In this way, we regain some compactness to achieve least-energy solutions (among symmetric functions) and we also obtain directly important information about the shape of solutions, which can be used to guarantee multiplicity results.

Following the framework from~\cite{Clapp16,CLR18,CS20}, let us introduce some notation.  Let $G$ be a closed subgroup of the group $O(N)$ of linear isometries of $\R^N$ such that
\begin{itemize}
\item[{\bf ($A_1$)}] for each $x\in\mathbb R^N$, either $\dim(Gx)>0$ or $Gx=\{x\}$,
\end{itemize}
where $Gx:=\{gx\::\: g\in G\}$ is the $G$-orbit of $x$.
Let $\phi:G\to {\mathbb Z}_2:=\{-1,1\}$ be a continuous homomorphism of groups (\emph{i.e.} $\phi(g\circ h)=\phi(g)\phi(h)$) and let $\Omega$ be a $G$-invariant set of $\mathbb R^N$ (\emph{i.e.}, $Gx\subset \Omega$ if $x\in \Omega$). A function $u:\Omega\to \R$ is said to be \emph{$\phi$-equivariant} if
\begin{equation}\label{def:equi}
u(gx)=\phi(g)u(x) \quad\textnormal{for all } g\in G\text{ and } x\in \Omega.
\end{equation}

Depending on $\phi,$ it could happen that \eqref{def:equi} is only satisfied by $u\equiv 0$, for instance, if $G=O(N)$ and $\phi(g)$ is the determinant of $g\in G$. To avoid this, we need to impose some condition on $\phi$, namely, that
\begin{itemize}
\item[{\bf ($A_2$)}] there exists $\xi\in\mathbb R^N$ such that $\{g\in G:g\xi=\xi\}\subset \ker \phi:=\{g\in G: \phi(g)=1\}$.  
\end{itemize}
Under this condition, the space 
\begin{equation}\label{D0phi:def}
D_0^s(\Omega)^{\phi}:=\left\{u\in D_0^{s}(\Omega):u \textnormal{ is $\phi$-equivariant}\right\}
\end{equation}
has infinite dimension, see~\cite[Theorem 3.1]{BCM05}.

Our first result concerns bounded domains.  Let $\Omega^G:=\{x\in\Omega\::\: gx=x\text{ for all }g\in G\}$ be the set of $G$-fixed points of $\Omega$  and let ${\mathbb N}_0:=\mathbb N\cup \{0\}$.
\begin{theo}\label{main:thm:bdd} Assume that $G$ and $\phi$ verify assumptions \textnormal{\textbf{($A_1$)}} and \textnormal{\textbf{($A_2$)}}. Let $N\geq 1$ and let $\Omega\subset \R^N$ be a smooth bounded $G$-invariant domain such that $\Omega^G=\emptyset$.
 \begin{enumerate}
  \item (Existence) For every $s>0$ with $N>2s$ there is a $\phi$-equivariant least-energy solution $u_s$ of
  \begin{equation}
(-\Delta)^s u_s=|u_s|^{\ds-2}u_s, \qquad 
u_s\in D_0^s(\Omega)^\phi\backslash \{0\}.\label{Ps}
\end{equation}
The solution is sign-changing if $\phi:G\to\{-1,1\}$ is surjective.
\item (Convergence) Let $(s_k)_{k\in\N}\subset(0,\frac{N}{2})$ such that $s_k\to s$ as $k\to\infty$ with $N>2s>0$, and let $u_{s_k}$ be a $\phi$-equivariant least-energy solution of
\begin{equation*}
(-\Delta)^{s_k} u_{s_k}=|u_{s_k}|^{2^\star_{s_k}-2}u_{s_k}, \qquad 
u_{s_k}\in D_0^{s_k}(\Omega)^\phi\backslash \{0\}.
\end{equation*}
Then, up to a subsequence, there is a $\phi$-equivariant least-energy solution $u_s$ of \eqref{Ps} such that
\begin{align}\label{conv:bdd}
 u_{s_k}\to u_s\quad \text{ in }D^{t}_0(\Omega) \text{ as $k\to\infty$ for all }t\in[0,s).
\end{align}
\end{enumerate}
\end{theo}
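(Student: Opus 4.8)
The plan is to argue variationally, using the equivariant structure to restore compactness at the critical exponent.

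\emph{Existence.} Fix $s>0$ with $N>2s$ and consider the $C^1$ functional $J_s(u)=\frac12\|u\|_s^2-\frac1{\ds}\int_\Omega|u|^{\ds}$ on the Hilbert space $D_0^s(\Omega)^\phi$ (well defined by the Sobolev embedding $D_0^s(\Omega)\hookrightarrow L^{\ds}(\Omega)$), together with its ground-state level $c_s^\phi=\frac sN(S_s^\phi)^{N/(2s)}$, where $S_s^\phi:=\inf\{\|u\|_s^2:u\in D_0^s(\Omega)^\phi,\ \int_\Omega|u|^{\ds}=1\}$. The core point is that $S_s^\phi$ is attained. Taking a minimizing sequence $(v_n)$ (normalized by $\int_\Omega|v_n|^{\ds}=1$), it is bounded in the weakly closed subspace $D_0^s(\Omega)^\phi$, so $v_n\weakly v$; a (higher-order) concentration-compactness argument shows that the defect measures of $(|v_n|^{\ds})$ and of the energy densities $|(-\Delta)^{s/2}v_n|^2$ are $G$-invariant, and if such a measure carried an atom at some $x_0\in\Omega$, then, as $\Omega^G=\emptyset$, assumption \textbf{($A_1$)} gives $\dim Gx_0>0$, producing a whole positive-dimensional orbit of atoms of equal positive mass for the second measure, contradicting $\|v_n\|_s^2\to S_s^\phi<\infty$; since $\Omega$ is bounded, concentration is the only obstruction, so this is excluded and $v_n\to v$ in $L^{\ds}(\Omega)$. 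This is precisely the symmetric concentration-compactness mechanism of \cite{Clapp16,CLR18,CS20}. A suitable rescaling $u_s$ of the resulting minimizer lies on the Nehari set $\mathcal{N}_s^\phi$; testing the Lagrange-multiplier equation with $u_s$ forces the multiplier to vanish, and Palais' principle of symmetric criticality ($J_s$ is invariant under the isometric action $g\cdot u=\phi(g)u(g^{-1}\,\cdot)$, whose fixed-point set is $D_0^s(\Omega)^\phi$) then makes $u_s$ a weak solution of \eqref{Ps} with $J_s(u_s)=c_s^\phi$. If $\phi$ is surjective, picking $g_0$ with $\phi(g_0)=-1$ yields $u_s(g_0\,\cdot)=-u_s$, so the nontrivial $u_s$ changes sign.

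\emph{Convergence.} Now let $s_k\to s$ and let $u_{s_k}$ be $\phi$-equivariant least-energy solutions of the $s_k$-problem. Since at a Nehari point $c_{s_k}^\phi=\frac{s_k}N\|u_{s_k}\|_{s_k}^2=\frac{s_k}N(S_{s_k}^\phi)^{N/(2s_k)}$, testing $S_{s_k}^\phi$ with a fixed nonzero $\varphi\in C_c^\infty(\Omega)^\phi$ (available by \textbf{($A_2$)}), using $\|\varphi\|_{s_k}\to\|\varphi\|_s$ and $\int_\Omega|\varphi|^{\dsa{s_k}}\to\int_\Omega|\varphi|^{\ds}$, and combining with $S_{s_k}^\phi\ge S_{s_k}$ and the continuity in the order of the optimal Sobolev constant of $\R^N$, I obtain $0<c_1\le c_{s_k}^\phi\le c_2$, hence a uniform bound on $\|u_{s_k}\|_{s_k}$. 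For $t<s$, set $\sigma=(s+t)/2$; for $k$ large $s_k\ge\sigma$ and $D_0^{s_k}(\Omega)\hookrightarrow D_0^\sigma(\Omega)$ has a uniform constant (this is where $\Omega$ bounded enters), so $(u_{s_k})$ is bounded in $D_0^\sigma(\Omega)$. After a diagonal extraction I may assume $u_{s_k}\weakly u_s$ in $D_0^t(\Omega)$ for every $t<s$, with $u_s\in D_0^s(\Omega)$, and, by Rellich on the bounded domain $\Omega$ together with $\dsa{\tau}\to\ds$ as $\tau\uparrow s$, also $u_{s_k}\to u_s$ in $L^p(\Omega)$ for all $p<\ds$ and a.e.\ in $\Omega$ (in particular $u_s\in L^{\ds}(\Omega)$ by Fatou). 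Testing the $s_k$-equation against $\psi\in C_c^\infty(\Omega)^\phi$, the bilinear term $\langle u_{s_k},\psi\rangle_{s_k}$ (the polar form of $\|\cdot\|_{s_k}$) tends to $\langle u_s,\psi\rangle_s$ (from the uniform bound and $s_k\to s$), while, $\psi$ being compactly supported and $|u_{s_k}|^{\dsa{s_k}-2}u_{s_k}$ being bounded in a fixed $L^q$, $q>1$, and convergent a.e., the nonlinear term tends to $\int_\Omega|u_s|^{\ds-2}u_s\psi$; thus $u_s$ solves \eqref{Ps}, though a priori $u_s$ might be $\equiv0$.

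\emph{No bubbling and conclusion.} The decisive step is $u_s\not\equiv0$. Here I would re-run the symmetric concentration-compactness analysis of the first part, now for $(u_{s_k})$ with the exponents $\dsa{s_k}$ and the spaces $D_0^{s_k}(\Omega)$ varying with $k$: a concentration point would again, by equivariance and \textbf{($A_1$)} with $\Omega^G=\emptyset$, create a positive-dimensional orbit of atoms, contradicting the uniform bound on $\|u_{s_k}\|_{s_k}$, and $\Omega$ bounded rules out escape of mass. Hence $\int_\Omega|u_{s_k}|^{\dsa{s_k}}\to\int_\Omega|u_s|^{\ds}$, and as the left-hand side equals $\|u_{s_k}\|_{s_k}^2\ge c_1>0$ we get $u_s\neq0$. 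Then, using the Nehari identities for $u_s$ and for each $u_{s_k}$, $J_s(u_s)=\frac sN\|u_s\|_s^2=\frac sN\int_\Omega|u_s|^{\ds}=\frac sN\lim_k\|u_{s_k}\|_{s_k}^2=\lim_k\frac s{s_k}c_{s_k}^\phi=\lim_k c_{s_k}^\phi$; comparing with $J_s(u_s)\ge c_s^\phi$ and with $\limsup_k c_{s_k}^\phi\le c_s^\phi$ (from a fixed near-optimal smooth test function for $S_s^\phi$) gives $J_s(u_s)=c_s^\phi$, so $u_s$ is a $\phi$-equivariant least-energy solution and $c_{s_k}^\phi\to c_s^\phi$. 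Finally, for \eqref{conv:bdd}: for each $t<s$, $(u_{s_k})$ is bounded in $D_0^{(s+t)/2}(\Omega)$ and the embedding $D_0^{(s+t)/2}(\Omega)\hookrightarrow D_0^t(\Omega)$ is compact since $\Omega$ is bounded, so $(u_{s_k})$ is relatively compact in $D_0^t(\Omega)$; together with $u_{s_k}\weakly u_s$ this forces $u_{s_k}\to u_s$ in $D_0^t(\Omega)$, the case $t=0$ being the $L^2$ convergence already obtained, and a further diagonal extraction makes a single subsequence serve all $t\in[0,s)$ at once.

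\emph{Main obstacle.} I expect the hardest part to be the concentration-compactness analysis for $(u_{s_k})$ as $s_k\to s$: one must run the symmetric concentration-compactness argument uniformly in $k$ across a moving critical exponent and a moving family of homogeneous Sobolev spaces, and, relatedly, establish the continuity in $s$ of the Gagliardo seminorms on $C_c^\infty$ functions, of the optimal constants $S_s$ and $S_s^\phi$, and of the embedding constants $D_0^s(\Omega)\hookrightarrow D_0^t(\Omega)$ on bounded domains. Once non-bubbling is available, the upgrade from weak to strong convergence in $D_0^t(\Omega)$ for $t<s$ is comparatively soft, using only compactness of the embedding into lower-order spaces on a bounded domain, which is precisely why one reaches $t<s$ but not $t=s$.
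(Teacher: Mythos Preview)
Your overall variational strategy is sound and your identification of the main obstacle is accurate, but the proposal diverges from the paper in a substantive way, and the divergence matters most exactly where you flag the difficulty.

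\textbf{Existence.} You argue via Lions-style defect measures: a $G$-invariant atom would sit on a positive-dimensional orbit, which is impossible for a finite measure. The paper instead runs a Struwe-style rescaling/blow-up (Theorem~\ref{thm:concentration}): from a Palais--Smale sequence it extracts $\lambda_k\to 0$ and $G$-fixed points $\xi_k$, and shows the rescaled sequence has a nontrivial limit, forcing $\xi_k\in(\overline{\Omega})^G$, which is empty. Both routes are valid for fixed $s$; your defect-measure version is arguably more elementary conceptually, but it presupposes a Lions concentration-compactness lemma for the energy density of $(-\Delta)^{s/2}$ valid for \emph{every} $s>0$, which is not entirely off-the-shelf in the higher-order fractional regime. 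The paper's rescaling approach avoids this by relying only on a Brezis--Kato-type inequality (Lemma~\ref{lem:bk}).

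\textbf{Convergence and the step $u_s\neq 0$.} Here is the genuine gap. You propose to ``re-run'' the defect-measure argument with $s_k$, the spaces $D_0^{s_k}(\Omega)$, and the exponents $2^\star_{s_k}$ all moving with $k$. This is not a standard concentration-compactness setting: there is no fixed energy density whose weak-$*$ limit you can analyze, and even the statement ``no atoms implies $\int_\Omega|u_{s_k}|^{2^\star_{s_k}}\to\int_\Omega|u_s|^{2^\star_s}$'' needs a separate argument when the exponent varies. The paper does \emph{not} attempt this. Instead it re-uses the rescaling machinery: assuming $u=0$, Lemma~\ref{arg:lem} produces $\lambda_k\to 0$ and $G$-fixed $\xi_k\to\xi$ with $\operatorname{dist}(\xi_k,\Omega)\leq C\lambda_k$, and the rescaled functions converge to a nontrivial limit; this forces $\xi\in\overline{\Omega}\cap(\R^N)^G$, contradicting $\Omega^G=\emptyset$. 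The engine behind Lemma~\ref{arg:lem} is the Brezis--Kato-type estimate of Lemma~\ref{lem:bk},
\[
\|w_k\varphi\|_{s_k}^2\leq (1+\eps)\,\cE_{s_k}(w_k,\varphi^2 w_k)+o(1),
\]
proved \emph{uniformly in $k$} via interpolation and sharp Hardy--Littlewood--Sobolev bounds (Lemmas~\ref{lem:sigma} and~\ref{lem:alpha}). This replaces the direct computations one would do for the Laplacian and is precisely what lets the argument survive the moving order $s_k$. Your proposal does not supply an analogue of this step, and without it the no-bubbling claim for the $s_k$-family is unproven.

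In short: for existence your route is a legitimate alternative; for convergence, the missing idea is a \emph{$k$-uniform} localization estimate of Brezis--Kato type, which the paper supplies via Lemma~\ref{lem:bk} and packages into the rescaling Lemma~\ref{arg:lem}. Everything you write after ``$u_s\neq 0$'' (least-energy via Fatou and upper semicontinuity of $c_{s_k}^\phi$, strong $D_0^t$-convergence from compactness of $D_0^{(s+t)/2}(\Omega)\hookrightarrow D_0^t(\Omega)$) matches the paper's argument.
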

Different choices for $G$ and $\phi$ in Theorem~\ref{main:thm:bdd} produce different kinds of solutions.  For instance, if $G=O(N)$ and $\phi\equiv 1$, then Theorem~\ref{main:thm:bdd} yields a solution $u_s$ of \eqref{Ps} which is radially symmetric.  On the other hand, if $G=G_i$ and $\phi=\phi_i$ are given as in \eqref{G:def} and \eqref{phi:def} below, then Theorem~\ref{main:thm:bdd} guarantees that problem \eqref{Ps} has at least $\lfloor \frac{N}{4} \rfloor$ nonradial sign-changing solutions, where $\lfloor x \rfloor$ denotes the greatest integer less than or equal to $x$; this existence result is new even in the local case for $s\in \mathbb N$ with $s\geq 3$ (the cases $s=1$ and $s=2$ are shown in~\cite{Clapp16,CS20} respectively).

To prove the first part of Theorem~\ref{main:thm:bdd} (existence), we extend to the fractional setting the strategy used in~\cite{Clapp16,CLR18,CS20} for local problems, where a symmetric-concentration compactness argument is used.  The main difficulty in this extension is the adaptation of a Brezis-Kato-type argument which is based on direct calculations for the Laplacian.  Direct computations are much harder in nonlocal problems (specially in the higher-order regime $s>1$). We overcome this difficulty using interpolation inequalities and sharp Hardy-Littlewood-Sobolev inequalities among other tools, see Section~\ref{sec:bk}.

The proof of the convergence result relies strongly on the $\phi$-equivariance of the solutions, which yields the necessary compactness to extract a convergent subsequence and to guarantee that the limit is a least-energy $\phi$-equivariant solution as well.  We remark that \eqref{conv:bdd} also holds in the standard Sobolev norm $\|\cdot\|_{H^{s-\delta}}$, which is equivalent to the homogeneous norm $\|\cdot\|_{s-\delta}$ in $D^{s-\delta}_0(\Omega)$ with $\Omega$ bounded.  After Theorem~\ref{main:thm:unbdd} we comment more on these results and compare our findings with previously known convergence results for subcritical problems.

The assumption $\Omega^G=\emptyset$ is fundamental, since the existence of solutions of critical problems is closely related to the geometry of the domain. Indeed, a consequence of the Pohozaev identity is that, if $\Omega$ is star-shaped and $s=1$, then \eqref{Ps} only admits trivial solutions. Although for any $s\in(0,\infty)$ there  are versions of the Pohozaev identity (see~\cite[Corollary 1.7]{rs15}), a general nonexistence result as in the case $s=1$ is, as far as we know, not available for \eqref{Ps} if $s\neq 1$. This is because the nonexistence proof also requires a unique continuation principle and the existence of a suitable extension of the solution to $\R^N$.

Using maximum principles, one can show nonexistence of \emph{nonnegative} solutions in starshaped domains for \eqref{Ps} if $s\in(0,1)\cup\{2\}$, see \cite[Corollary 1.3]{RS14} and \cite[Theorem 7.33]{GGS03}. On balls, the nonexistence of nonnegative solutions is also known for $s\in\N$, see \cite{LS08} (see also \cite[Theorem 7.34]{GGS03}). Using the Pohozaev identities from \cite{RS14} and a fractional higher-order Hopf Lemma from \cite{AJS18_poisson}, we can extend this nonexistence result to any $s>1.$ 

\begin{prop}\label{prop:nonex}
 Let $\alpha\in (0,1)$, $s>1$, $N>2s$, and let $B:=\{x\in\R^N\::\: |x|<1\}$. The problem 
\begin{equation}\label{nonex:eq}
(-\Delta)^su=|u|^{\ds-2}u, \qquad 
u\in D_0^s(B)\cap C^\alpha(\overline{B}),
\end{equation}
does not admit nontrivial nonnegative solutions.
\end{prop}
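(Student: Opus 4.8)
The plan is to argue by contradiction. Suppose $u\not\equiv 0$ is a nonnegative solution of \eqref{nonex:eq}, and set $\delta(x):=\operatorname{dist}(x,\partial B)$. Since $u\geq 0$, the right-hand side of the equation is $|u|^{\ds-2}u=u^{\ds-1}\geq 0$, and it is not identically zero because $u\not\equiv 0$. First I would improve the regularity of $u$: since $u\in C^\alpha(\ov B)\subset L^\infty(B)$, the right-hand side $u^{\ds-1}$ is bounded, so the interior and boundary regularity theory for $(-\Delta)^s$ (as developed in \cite{AJS18_poisson} and the references therein) allows one to bootstrap and conclude that $u$ is a strong solution, smooth inside $B$, and that $u/\delta^s$ extends to a continuous (indeed H\"older continuous) function on $\ov B$. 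This last property is exactly the regularity required to make the boundary term in the Pohozaev identity meaningful.

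The second step is to invoke the Pohozaev identity for $(-\Delta)^s$ on the ball, which is star-shaped with respect to the origin (see \cite{RS14} for $s\in(0,1)$ and \cite[Corollary 1.7]{rs15} for $s>1$). For a solution of $(-\Delta)^su=f(u)$ with primitive $F(t)=\int_0^tf$, this identity has the schematic form
\begin{equation*}
\frac{N-2s}{2}\int_B u\,(-\Delta)^su\dx-N\int_B F(u)\dx=-c_s\int_{\partial B}\left(\frac{u}{\delta^s}\right)^2(x\cdot\nu)\,d\sigma,
\end{equation*}
where $\nu$ is the outward unit normal and $c_s>0$ depends only on $N$ and $s$. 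For the pure critical power $f(u)=u^{\ds-1}$ with $u\geq 0$ we have $u\,(-\Delta)^su=u^{\ds}$ and $F(u)=\frac1{\ds}u^{\ds}$, so the left-hand side equals $\bigl(\frac{N-2s}{2}-\frac{N}{\ds}\bigr)\int_B u^{\ds}\dx=0$ since $\ds=\frac{2N}{N-2s}$. Hence the boundary integral must vanish. Because $B$ is the unit ball centered at the origin, $x\cdot\nu=|x|^2=1>0$ on $\partial B$, and therefore $u/\delta^s\equiv 0$ on $\partial B$.

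Finally, I would derive a contradiction from the higher-order fractional Hopf lemma in balls from \cite{AJS18_poisson}. Since $u\in D_0^s(B)$ is nonnegative and $(-\Delta)^su=u^{\ds-1}$ is nonnegative and not identically zero, the Hopf lemma provides a constant $c>0$ and a neighborhood of $\partial B$ on which $u(x)\geq c\,\delta(x)^s$; in particular $u/\delta^s$ is bounded below by $c>0$ up to the boundary, which contradicts the conclusion $u/\delta^s\equiv 0$ on $\partial B$ obtained above. Thus \eqref{nonex:eq} admits no nontrivial nonnegative solution.

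I expect the main obstacle to be the first step, namely justifying that an a priori merely $C^\alpha(\ov B)$ solution is regular enough for the Pohozaev identity of \cite{rs15} to apply, with $u/\delta^s$ well defined and continuous up to $\partial B$ and the weight $\delta^s$ matching the order of the operator; one must also check that the hypotheses of the Hopf lemma in \cite{AJS18_poisson} are genuinely met for the (non-integer, higher-order) operator $(-\Delta)^s$ with $s>1$ on the ball. Once these technical points are settled, the cancellation of the interior terms forced by the criticality of the exponent, together with the strict star-shapedness of the ball, closes the argument.
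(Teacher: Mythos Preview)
Your proposal is correct and follows the same strategy as the paper: the Pohozaev identity of \cite[Corollary~1.7]{rs15} forces the boundary quantity $u/\delta^s$ to vanish on $\partial B$, while a Hopf-type result from \cite{AJS18_poisson} forces it to be strictly positive, yielding the contradiction. The only methodological difference is that the paper bypasses your regularity bootstrap by invoking directly the explicit integral representation formula for $\mathscr{D}^s u(z)=\lim_{x\to z}u(x)/(1-|x|^2)^s$ from \cite[Corollary~1.9 and Lemma~2.1]{AJS18_poisson}, which simultaneously establishes that this boundary trace exists and equals a manifestly positive integral of $u^{\ds-1}$ against a positive kernel; this packages the regularity and the Hopf lemma into a single step and removes the technical obstacle you anticipated.
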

Maximum principles (and in particular Hopf Lemmas) do not hold in general domains if $s>1$; for instance, positivity preserving properties fail in ellipses  for $s\in(1,\frac{3}{2}+\sqrt{3})$, see \cite{AJS18_ellipse}, in dumbbell domains for $s\in (m,m+1)$ with $m$ odd, see \cite[Theorem 1.11]{AJS18_poisson}, and in two disjoint balls for $s\in (m,m+1)$ with $m$ odd, see \cite[Theorem 1.1]{AJS18_loss} (curiously, this last set has a \emph{positive} Green's function if $s\in(m,m+1)$ and $m$ is \emph{even}, see \cite[Theorem 1.10]{AJS18_poisson}).

\medskip

Next we present our existence and convergence results for \emph{entire solutions}, namely, when $\Omega=\R^N$.  This setting is more delicate for several reasons.  For the existence part, there is an inherent lack of compactness due to the scaling and translation invariance \eqref{issues}.  This is controlled in our proofs using the symmetric structure of $D_0^s(\R^N)^\phi$.  On the other hand, the characterization of the convergence of solutions faces a problem regarding the incompatibility of the functional spaces.  To be more precise, by the Sobolev inequality,
\begin{align*}
 D_0^s(\R^N)=D^s(\R^N):=\{u\in L^{2^\star_s}(\R^N)\::\: \|u\|_s<\infty\},
\end{align*}
(see Theorem \ref{thm:sobolev} below, see also \cite{BGV18} for a survey on homogeneous Sobolev spaces).  In particular, it is not true that $D^t(\R^N)\subset D^s(\R^N)$ for $t>s$, as it happens in bounded domains, and therefore it is not trivial to find a suitable norm to describe the convergence properties of solutions; for instance, a characterization such as \eqref{conv:bdd} is not possible in $\R^N$ since $u_s$ might not belong to $D^{t}(\R^N)$ for $t\neq s$. This is not a problem of local smoothness, but rather an incompatibility with the decay at infinity. In the following result we show that entire solutions converge when multiplied by an arbitrary function in $C^\infty_c(\R^N)$.

\begin{theo}\label{main:thm:unbdd} Assume that $N\geq 1$ and that $G$ and $\phi$ verify assumptions \textnormal{\textbf{($A_1$)}} and \textnormal{\textbf{($A_2$)}}. 
 \begin{enumerate}
  \item (Existence) For every $s>0$ with $N>2s$ there is a $\phi$-equivariant least-energy solution $w_s$ of
  \begin{equation}
(-\Delta)^s w_s=|w_s|^{\ds-2}w_s, \qquad 
w_s\in D^s(\R^N)^\phi\backslash \{0\}.\label{Us}
\end{equation}
The solution is sign-changing if $\phi:G\to\{-1,1\}$ is surjective.
\item (Convergence) Let $(s_k)_{k\in\N}\in(0,\frac{N}{2})$ such that $s_k\to s$ as $k\to\infty$ with $N>2s>0$, and let $w_{s_k}$ be a $\phi$-equivariant least-energy solution of
  \begin{equation*}
(-\Delta)^{s_k} w_{s_k}=|w_{s_k}|^{2^\star_{s_k}-2}w_{s_k}, \qquad 
w_{s_k}\in D^{s_k}(\R^N)^\phi\backslash \{0\}.
\end{equation*}
Then, up to a rescaled subsequence of $w_{s_k}$ denoted by $\widetilde w_{s_k}$, there is a $\phi$-equivariant least-energy solution $w_s$ of \eqref{Us} such that
\begin{align}\label{conv:unbdd}
 \eta\widetilde w_{s_k}\to \eta w_s\quad \text{ in }D^{t}(\R^N) \text{ as $k\to\infty$ for all }\eta\in C^\infty_c(\R^N) \text{ and }t\in[0,s).
\end{align}
In particular, $\widetilde w_{s_k}\to w_s$ in $L_{loc}^q(\R^N)$ as $k\to\infty$ for all $q\in[1,2^\star_s)$.
\end{enumerate}
\end{theo}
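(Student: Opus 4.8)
The plan is to prove Theorem~\ref{main:thm:unbdd} in two stages, following the structure already announced in the paper: first establish existence of a $\phi$-equivariant least-energy solution of \eqref{Us} by a symmetric concentration-compactness argument, then prove convergence by combining uniform energy bounds with a rescaling and the $\phi$-equivariance.

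\textbf{Existence.} I would work with the energy functional associated to \eqref{Us} restricted to the Nehari manifold inside $D^s(\R^N)^\phi$, and let $c_s^\phi$ denote the least symmetric energy level. Taking a minimizing sequence $(v_n)\subset D^s(\R^N)^\phi$, the scaling and translation invariances \eqref{issues} mean we cannot expect convergence directly; instead I would apply the symmetric version of the concentration-compactness principle (in the spirit of \cite{Clapp16,CLR18,CS20}). Assumption \textbf{($A_1$)} forces every orbit to be either a single fixed point or positive-dimensional, which rules out the ``vanishing'' scenario and controls ``dichotomy'': if concentration occurred at a point $x_0$ with positive-dimensional orbit $Gx_0$, the $\phi$-equivariance would force the mass to concentrate on the whole orbit $Gx_0$, contradicting the energy being at the least \emph{symmetric} level (the symmetric Sobolev constant is strictly larger than the non-symmetric one because several bubbles must appear). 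Assumption \textbf{($A_2$)} guarantees $D^s(\R^N)^\phi$ is infinite-dimensional and that nontrivial $\phi$-equivariant functions exist, so $c_s^\phi$ is finite and positive. After ruling out the bad alternatives, the rescaled/translated minimizing sequence converges strongly to a minimizer $w_s$, which by the Lagrange multiplier rule (and a rescaling to normalize the multiplier) solves \eqref{Us}. Finally, if $\phi$ is surjective, pick $g\in G$ with $\phi(g)=-1$; then $w_s(gx)=-w_s(x)$, so $w_s$ must change sign.

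\textbf{Convergence.} Let $w_{s_k}$ be $\phi$-equivariant least-energy solutions with $s_k\to s$. The first step is a uniform control of the energy levels: using a fixed nontrivial test function $\eta\in C^\infty_c(\R^N)^\phi$ and the continuity of the Gagliardo seminorms and critical exponents in the order parameter (proved in the preliminary sections), one shows $c_{s_k}^\phi$ is bounded above and below away from $0$ uniformly in $k$. This gives a uniform bound $\|w_{s_k}\|_{s_k}\le C$ and $\int_{\R^N}|w_{s_k}|^{2^\star_{s_k}}\le C$. Because of the scaling invariance \eqref{issues}, these solutions may drift off to infinity or spread out, so I would first \emph{rescale}: using a profile-decomposition / concentration-compactness argument adapted to the symmetric setting (again exploiting \textbf{($A_1$)} and \textbf{($A_2$)}), select $\lambda_k>0$ and $\xi_k\in\R^N$ (compatible with the symmetry, e.g. $\xi_k$ fixed by $G$) so that the rescaled functions $\widetilde w_{s_k}:=(w_{s_k})_{\lambda_k,\xi_k}$ do not vanish and do not spread — i.e. a definite amount of mass sits in a fixed ball. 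Passing to a subsequence, $\widetilde w_{s_k}\weakly w_s$ weakly (in the appropriate local sense) and strongly in $L^q_{loc}$ for $q<2^\star_s$, by a Rellich-type compactness statement for the $D^{s_k}$ spaces (using $s_k\to s$ and interpolation). One then checks $w_s\not\equiv0$, that $w_s$ is $\phi$-equivariant (equivariance passes to local limits), and that $w_s$ solves \eqref{Us} by testing the equations against $C^\infty_c(\R^N)$ functions and passing to the limit in the bilinear forms (continuity of $(-\Delta)^{s_k}$ in $s_k$ against fixed test functions) and in the nonlinear term (strong $L^q_{loc}$ convergence plus the uniform $L^{2^\star_{s_k}}$ bound). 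A lower-semicontinuity argument on the energy, combined with the uniform upper bound $\limsup c_{s_k}^\phi\le c_s^\phi$ coming from the test-function construction, forces $w_s$ to be a \emph{least-energy} $\phi$-equivariant solution and upgrades the weak convergence to strong convergence of the truncated pieces. Concretely, for fixed $\eta\in C^\infty_c(\R^N)$ one shows $\|\eta\widetilde w_{s_k}-\eta w_s\|_t\to0$ for every $t\in[0,s)$: the localized seminorms are controlled because the mass is concentrated, the compact support removes the incompatibility-at-infinity obstruction described in the introduction, and the gap $t<s$ lets one interpolate between the uniform $D^{s_k}$ bound and the strong $L^2_{loc}$ convergence. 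The final assertion $\widetilde w_{s_k}\to w_s$ in $L^q_{loc}$ for $q\in[1,2^\star_s)$ then follows from \eqref{conv:unbdd} together with the fractional Sobolev embedding $D^t_0(B_R)\hookrightarrow L^q(B_R)$ for $t$ chosen close enough to $s$.

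\textbf{Main obstacle.} I expect the hard part to be the rescaling/concentration analysis in the \emph{variable-order} setting: standard profile decompositions are stated for a fixed Sobolev space, whereas here the ambient space $D^{s_k}(\R^N)$ itself varies with $k$, so one must simultaneously track the order $s_k\to s$, the scaling parameters $\lambda_k$, and the translations $\xi_k$, and show the symmetric energy threshold still prevents splitting into multiple bubbles. Making the compactness (Rellich-type) and the passage to the limit in the seminorms rigorous when both the function \emph{and} the space are moving — in particular proving that no mass escapes to infinity after the correct rescaling, and that $w_s$ has exactly the least symmetric energy and not less — is the technical crux; everything else (existence via symmetric concentration-compactness, equivariance of limits, limits in the nonlinear term) is comparatively routine given the tools cited in the preliminaries.
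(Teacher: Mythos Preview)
Your overall strategy is sound and the ingredients you list are the right ones, but the paper's route differs from yours in two places, and in both cases the paper's choice is simpler and sidesteps exactly the obstacle you flag.

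\textbf{Existence.} You propose running symmetric concentration--compactness directly on a minimizing sequence in $D^s(\R^N)^\phi$. The paper instead works on the \emph{unit ball} $B$: since $0\in B^G$, Lemma~\ref{lem:c_infinity} gives $c_s^\phi(B)=c_s^\phi(\R^N)$, so a Palais--Smale sequence for $J_s$ on $D_0^s(B)^\phi$ is produced via the mountain-pass structure and then fed into the concentration Theorem~\ref{thm:concentration} (already proved for bounded domains). Either alternative (I) or (II) of that theorem yields a minimizer of $J_s$ on $\mathcal N_s^\phi(\R^N)$. This avoids having to redo concentration--compactness in the unbounded setting; your direct approach would work but duplicates effort.

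\textbf{Convergence.} Here the difference is more substantial. You propose selecting $\lambda_k,\xi_k$ via a profile-decomposition argument and then ruling out splitting. The paper avoids this entirely: it simply \emph{dilates} each $w_{s_k}$ (around $0$, which is $G$-fixed, so no translation is needed) so that
\[
\int_{B_1(0)}|\widetilde w_{s_k}|^{2^\star_{s_k}}=\tau
\]
for a fixed small $\tau<(3\kappa_{N,s_k}^2)^{-N/2s_k}$. This normalization can always be achieved by scaling alone. The nontriviality of the limit $w$ is then obtained not by excluding vanishing in a concentration--compactness dichotomy, but by a Brezis--Kato-type inequality (Lemma~\ref{lem:bk}):
\[
\|\widetilde w_{s_k}\varphi\|_{s_k}^2\le \tfrac{3}{2}\,\cE_{s_k}(\widetilde w_{s_k},\varphi^2\widetilde w_{s_k})+o(1),
\]
which, combined with the equation, H\"older, Sobolev, and the choice of $\tau$, forces $\|\varphi\widetilde w_{s_k}\|_{s_k}\to 0$ for every $\varphi\in C_c^\infty(B_1(0))$ if $w=0$, contradicting the normalization. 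This is precisely how the paper circumvents your ``main obstacle'': there is no variable-order profile decomposition at all, just a one-parameter rescaling plus an elementary test-function estimate. The remaining steps (passing to the limit in the equation via Lemma~\ref{phi:con}, and showing least energy by Fatou plus the test-function upper bound you describe) are as you outline.
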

As in the bounded domain case, if $G=O(N)$ and $\phi\equiv 1$, then a solution $w_s$ of \eqref{Us} is a radially symmetric function, see~\cite{chen} for a study of this type of solutions.  If $G=G_i$ and $\phi=\phi_i$ are those given in \eqref{G:def} and \eqref{phi:def}, then Theorem~\ref{main:thm:unbdd} yields the existence of at least $\lfloor \frac{N}{4} \rfloor$ non-radial sign-changing solutions to \eqref{Us}. For $s\in(0,1)$, this existence result was proved in the recent paper~\cite{T20}, for $s=1$ it is shown in~\cite{Clapp16}, and for $s=2$ it is a particular case of~\cite[Theorem 1.1]{CS20}.  All these papers follow a strategy based on a symmetric-concentration compactness argument, but at a technical level they have important differences and none of them can be easily extended to guarantee existence of solutions in the whole higher-order range $s\in (1,\infty)$. In this sense, the method we present here is more flexible and universal. We emphasize that the solutions given by Theorem \ref{main:thm:unbdd} are different from those obtained in \cite{Ding86} for $s=1$, in \cite{BSW04} for $s\in\N$, and in \cite{fang,ABR19} for $s\in(0,1)$.

These entire solutions are obtained by a suitable rescaling of a concentrating energy-minimizing sequence, see Theorem~\ref{thm:concentration}, where it is also shown that the concentration point is necessarily a $G$-fixed point.  See also~\cite[Theorem 2.5]{Clapp16} for other variants of these type of results for the Laplacian.

In the convergence part in Theorem~\ref{main:thm:unbdd}, the rescaling $\widetilde w_{s_k}$ of the sequence $w_{s_k}$ is needed to avoid the scaling invariance \eqref{issues} typical in critical problems.  Without this rescaling it can happen that $w_{s_k}$ converges to 0 or that it diverges at every point.  A particularly useful rescaling is presented in Theorem~\ref{thm:conv:entire} via the condition \eqref{rscl}, which is convenient for technical reasons. The use of cut-off functions to characterize the convergence \eqref{conv:unbdd} is one of the main methodological contributions of this work and it requires delicate uniform estimates.

As far as we know, Theorems~\ref{main:thm:bdd} and~\ref{main:thm:unbdd} are the first results to consider the convergence of solutions in the critical regime (r.h.s. $|u|^{2^\star_s-2}u$) and for higher-order problems ($s\in(1,\infty)$). Previous convergence results were only available for subcritical problems (where the compactness of the embedding $D^s_0(\Omega)\hookrightarrow L^{p}(\Omega)$, $0<p<2^\star_s$, is the main tool) and only for $s_k\nearrow 1$, see~\cite{BS19,BS20,FBS20}.  For linear problems, the continuity of the solution map $s\mapsto v_s$ is considered in~\cite{BHS18} as $s\nearrow 1$ and the continuity and differentiability of this map at any $s\in(0,1)$ is studied in~\cite{JSW20}.

Furthermore, we mention that our convergence characterizations \eqref{conv:bdd} and \eqref{conv:unbdd} are stronger than those of~\cite{BS19,BS20,FBS20}, which are in terms of $L^2$ and $L^2_{loc}$ norms. Note that solutions of nonlinear equations with a potential (such as those considered in~\cite{BS19,BS20,FBS20}) would have $L^2$ as a common space for all solutions regardless of the value of $s$, but this is not the case for the pure critical exponent problem \eqref{Us}.

\medskip 

The paper is organized as follows. In Section~\ref{sec:prelim} we detail our symmetry setting and functional framework and exhibit a family of symmetry groups $G_i$ and surjective homomorphism $\phi_i$ that, together with Theorems~\ref{main:thm:bdd} and~\ref{main:thm:unbdd}, yield nonradial sign changing solutions. Section~\ref{sec:bk} contains the main technical tools used to show our main existence and convergence results. In Section~\ref{sec:c} we show a concentration result using a symmetric-concentration compactness argument.  Sections~\ref{sec:bdd} is devoted to the proof of Theorem~\ref{main:thm:bdd} and the nonexistence result stated in Proposition \ref{prop:nonex}. Finally, in Section \ref{sec:ubd}, we provide the proof of Theorem~\ref{main:thm:bdd}.

\section{Preliminaries}\label{sec:prelim}

In this section, we introduce the symmetric setting and functional framework to study the pure critical exponent problem \eqref{eq:frac_crit_exp}. We also detail the definition and some properties of the (possibly higher-order) fractional Laplacian and the homogeneous (fractional) Sobolev space.  

\subsection{Functional framework}

For $u\in C_c^\infty(\R^N)$ the fractional Laplacian of order $2\sigma$ is given by
\begin{equation*}
(-\Delta)^\sigma u(x)
=c_{N,\sigma}\pv\int_{\R^N}\frac{u(x)-u(y)}{|x-y|^{N+2\sigma}}\d{y}
=c_{N,\sigma}\lim_{\eps\to0}\int_{\{|x-y|>\eps\}}\frac{u(x)-u(y)}{|x-y|^{N+2\sigma}}\d{y}
\qquad \text{ for } x\in\R^N,
\end{equation*}
where $\pv$ means in the principal value sense,
\begin{align}\label{cnsigma}
c_{N,\sigma}:=4^\sigma\pi^{-N/2}\sigma(1-\sigma)\frac{\Gamma(N/2+\sigma)}{\Gamma(2-\sigma)} 
\end{align}
is a normalization constant, and $\Gamma$ is the usual gamma function.  Let $s=m+\sigma>1$ with $m\in\mathbb N$ and $\sigma\in(0,1)$. The fractional Laplacian of order $2s$ is given by
\begin{equation*}
(-\Delta)^su(x):=
\begin{cases}
\displaystyle
(-\Delta)^{\frac{m}{2}}(-\Delta)^\sigma(-\Delta)^{\frac{m}{2}}u(x), & \text{for $m$ even}, \\
\sum_{i=1}^{N}(-\Delta)^{\frac{m-1}{2}}\left(\partial_i (-\Delta)^\sigma\left(\partial_i(-\Delta)^{\frac{m-1}{2}}u(x)\right)\right), &\text{for $m$ odd}.
\end{cases}
\end{equation*}
We remark that other pointwise evaluations of $(-\Delta)^s$ are possible, see for example~\cite{AJS18,ssurvey}, and we refer to~\cite{AJS18_halfspace,AJS18_green,AJS18_poisson} for recent studies on boundary value problems involving higher-order fractional Laplacians.

For $s>0$ let $H^s(\R^N):=\left\{u\in L^2(\R^N):(1+|\xi|^2)^{\frac{s}{2}}\widehat u\in L^2(\R^N)\right\}$ denote the usual fractional Sobolev space, where $\widehat{u}$ denotes the Fourier transform of $u$. For $\Omega\subset \R^N$ a smooth open set, let $D^s_0(\Omega)$ be the closure of $C^\infty_c(\Omega)$ with respect to the norm 
\begin{equation}\label{def:norm_s}
\|u\|_s:=\left(\cE_s(u,u)\right)^{1/2},
\end{equation}
where 
\begin{equation}\label{eq:equiv_fourier}
\cE_{s}(u,v)=\int_{\R^N}|\xi|^{2s}\widehat u(\xi)\widehat v(\xi)\d{\xi}
\end{equation}%
is the associated scalar product.  If $\Omega=\R^N$, then we simply write $D^s(\R^N)$ instead of $D^s_0(\R^N)$.  Let $\mathcal H^s_0(\Omega):=\{u\in H^s(\R^N):u=0 \text{ on } \R^N\setminus \Omega\}$ equipped with the standard $H^s$-norm.  If $\Omega$ is bounded, then
\begin{equation}\label{eq:norm_equiv_h0s}
\ell_{1,s}\norme{u}_{\mathcal H_0^s(\Omega)}\leq \norme{u}_s\leq \norme{u}_{\mathcal H_0^s(\Omega)},
\end{equation}
where $\ell_{1,s}=2^{-1}\min\{1,\lambda_{s,1}\}$, $\lambda_{1,s}=\lambda_{1,s}(\Omega)$ is the first eigenvalue of $((-\Delta)^s,{\mathcal H}_0^s(\Omega))$, see, for example,~\cite{AJS18_loss}.

If $m\in \mathbb N$, $\sigma\in(0,1)$, $s=m+\sigma$, and $u,v\in D^s(\R^N)$,  then the following are equivalent expressions for $\cE_s$.
\begin{align}
 \cE_{\sigma}(u,v)&=
\frac{c_{N,\sigma}}{2}\int_{\R^N}\int_{\R^N}\frac{(u(x)-u(y))(v(x)-v(y))}{|x-y|^{N+2\sigma}}\d{x}\d{y},\nonumber\\
\cE_{s}(u,v)&=\begin{cases}
\cE_{\sigma}((-\Delta)^{\frac{m}{2}}  u,(-\Delta)^{\frac{m}{2}} v), & \text{if $m$ is even,}\\
\sum_{k=1}^{N}\cE_{\sigma}(\partial_k (-\Delta)^{\frac{m-1}{2}} u,\partial_k (-\Delta)^{\frac{m-1}{2}} v), & \text{if $m$ is odd.}
\end{cases} \label{bilin:def}
\end{align}
For some results we also consider $s\in\N$, in these cases we have that 
\begin{align*}
\cE_{s}(u,v)&=\begin{cases}
\displaystyle \int_{\R^N}(-\Delta)^{\frac{m}{2}}  u(-\Delta)^{\frac{m}{2}} v, & \text{if $m$ is even,}  \vspace{0.1 cm} \\
\displaystyle \int_{\R^N}\nabla (-\Delta)^{\frac{m-1}{2}} u\nabla (-\Delta)^{\frac{m-1}{2}} v, & \text{if $m$ is odd.}
\end{cases}
\end{align*}
In any case, for $s>0$, $\int_{\R^N}(-\Delta)^su(x)v(x)\dx=\cE_s(u,v)$ for $u\in C_c^\infty(\R^N)$ and $v\in D^s(\R^N)$, see, for example,~\cite{AJS18,AJS18_loss}.  Throughout the paper the $L^q$-norm is denoted by
\begin{align*}
 |f|_q:=\left(\int_{\Omega}|f(x)|^q\dx\right)^{1/q}\qquad \text{ for }q\in[1,\infty).
\end{align*}

We close this section with two important results.

\begin{theo}[Fractional Sobolev inequality]\label{thm:sobolev} Let $N\geq 1$, $s>0$, and $N>2s$. There is $\kappa_{N,s}>0$ such that $|u|_{\ds}\leq \kappa_{N,s}\|u\|_{s}$ for all $u\in D^s(\R^N),$ where
\begin{equation}\label{eq:best_constant}
\kappa_{N,s}=2^{-2s}\pi^{-s}\frac{\Gamma(\frac{N-2s}{2})}{\Gamma(\frac{N+2s}{2})} \left(\frac{\Gamma(N)}{\Gamma(\frac{N}{2})}\right)^{\frac{2s}{N}}.
\end{equation}
\end{theo}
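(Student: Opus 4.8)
The plan is to obtain both the inequality and the precise value of the constant by duality from Lieb's sharp Hardy--Littlewood--Sobolev (HLS) inequality. Since $C^\infty_c(\R^N)$ is dense in $D^s(\R^N)$ by definition, it suffices to argue for $u\in C^\infty_c(\R^N)$ and then pass to the limit along a $\|\cdot\|_s$-Cauchy sequence (the estimate makes such a sequence Cauchy in $L^{\ds}$, and one identifies the limit with $u$ along an a.e.-convergent subsequence). For $u\in C^\infty_c(\R^N)$ put $v:=(-\Delta)^{s/2}u\in L^2(\R^N)$, so that $\widehat v(\xi)=|\xi|^s\widehat u(\xi)$, $\|u\|_s=|v|_2$, and $u=(-\Delta)^{-s/2}v$. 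By Plancherel and Cauchy--Schwarz, for every $\varphi\in C^\infty_c(\R^N)$,
\begin{equation*}
\Bigl(\int_{\R^N}u\,\varphi\,\dx\Bigr)^{2}=\Bigl(\int_{\R^N}v\,(-\Delta)^{-s/2}\varphi\,\dx\Bigr)^{2}\le \|u\|_s^{2}\,\bigl|(-\Delta)^{-s/2}\varphi\bigr|_2^{2},
\end{equation*}
so, taking the supremum over $\varphi$ with $|\varphi|_{p}=1$ where $p:=(\ds)'=\tfrac{2N}{N+2s}$, the claim — in the form $|u|_{\ds}^2\le\kappa_{N,s}\|u\|_s^2$, which is the normalization matching \eqref{eq:best_constant} — reduces to the scalar estimate $\bigl|(-\Delta)^{-s/2}\varphi\bigr|_2^{2}\le\kappa_{N,s}\,|\varphi|_p^2$.

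To prove this last estimate I would use the classical fact that $(-\Delta)^{-s}=(-\Delta)^{-s/2}\circ(-\Delta)^{-s/2}$ is the Riesz potential of order $2s$, i.e.\ convolution with $A_{N,s}\,|x|^{2s-N}$, where $A_{N,s}=\Gamma(\tfrac{N-2s}{2})\bigl(4^{s}\pi^{N/2}\Gamma(s)\bigr)^{-1}$. Then, again by Plancherel, $\bigl|(-\Delta)^{-s/2}\varphi\bigr|_2^2=\langle\varphi,(-\Delta)^{-s}\varphi\rangle=A_{N,s}\iint_{\R^N\times\R^N}\varphi(x)\,|x-y|^{-(N-2s)}\varphi(y)\,\dx\,\d y$, which is finite since $N>2s$. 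Lieb's sharp HLS inequality with exponent $N-2s\in(0,N)$ and conjugate exponent $p$ bounds the double integral by $C_{N,s}|\varphi|_p^2$, with $C_{N,s}=\pi^{(N-2s)/2}\,\Gamma(s)\,\Gamma(\tfrac{N+2s}{2})^{-1}\bigl(\Gamma(N/2)/\Gamma(N)\bigr)^{-2s/N}$ and equality exactly for translates and dilates of $(1+|x|^2)^{-(N+2s)/2}$. Multiplying the two explicit constants, everything except the $\Gamma$-ratios cancels and one gets $A_{N,s}C_{N,s}=2^{-2s}\pi^{-s}\,\Gamma(\tfrac{N-2s}{2})\,\Gamma(\tfrac{N+2s}{2})^{-1}\bigl(\Gamma(N)/\Gamma(N/2)\bigr)^{2s/N}=\kappa_{N,s}$, which is precisely \eqref{eq:best_constant}. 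This proves the inequality; optimality of $\kappa_{N,s}$, together with the identification of the extremals with the Sobolev bubbles $U(x)=c\,(1+|x|^2)^{-(N-2s)/2}$ (and their translates and dilates), follows by running the chain backwards: equality in HLS forces $\varphi$ to be such a bubble, equality in Cauchy--Schwarz forces $(-\Delta)^{s/2}u\propto(-\Delta)^{-s/2}\varphi$, and a Fourier computation — the transform of $(1+|x|^2)^{-\beta}$ in terms of Bessel functions — identifies the resulting $u$ as $U$.

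Two remarks. The argument is uniform for all $s>0$ with $N>2s$: it uses only the Fourier symbol $|\xi|^{2s}$ of $\|\cdot\|_s^2$ and the Riesz representation of $(-\Delta)^{-s}$, so the higher-order range $s\ge 1$ is not special. I expect the main — indeed essentially the only — obstacle to be the $\Gamma$-function bookkeeping: pinning down the normalization $A_{N,s}$ of the Riesz kernel for the Fourier convention implicit in \eqref{eq:equiv_fourier}, and checking that the HLS extremal is mapped by $(-\Delta)^{-s/2}$ to a genuine element of $D^s(\R^N)$; conceptually there is nothing hard. A cleaner but essentially equivalent alternative, used by Lieb for $s\in(0,1)$ and by Cotsiolis--Tavoularis for general $s$, is to transplant the problem to $S^N$ via stereographic projection: both $\|u\|_s^2$ and $|u|_{\ds}^2$ are conformally invariant and become $\langle A_{2s}w,w\rangle_{L^2(S^N)}$ and $|w|_{\ds(S^N)}^2$, where $A_{2s}$ is the conformally covariant operator acting on degree-$k$ spherical harmonics by $\Gamma(k+\tfrac N2+s)/\Gamma(k+\tfrac N2-s)$; Beckner's sharp Sobolev inequality on the sphere then gives the bound with constant realized by $w\equiv\text{const}$, and undoing the conformal factors recovers \eqref{eq:best_constant}.
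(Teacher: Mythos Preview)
Your argument is correct. The paper does not give its own proof: it simply cites \cite[Theorem 1.1]{CT04} (Cotsiolis--Tavoularis). Your HLS-duality route is precisely the argument used in that reference, so there is no genuine methodological difference to compare; you have just unpacked the citation. Your observation about the normalization is also right: the constant \eqref{eq:best_constant} is the sharp constant for the squared inequality $|u|_{\ds}^{2}\le \kappa_{N,s}\|u\|_{s}^{2}$ (for $s=1$ it reduces to $1/S_N$ with $S_N=\pi N(N-2)(\Gamma(N/2)/\Gamma(N))^{2/N}$), so the theorem as written should read $|u|_{\ds}\le \kappa_{N,s}^{1/2}\|u\|_{s}$; this is a typo in the paper, not a flaw in your proof.
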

\begin{proof}
 See~\cite[Theorem 1.1]{CT04}.
\end{proof}

\begin{theo}\label{thm:rellich_type}Let $\Omega$ be a bounded smooth domain, $s>0$, $N>2s$, $p\in[1,\ds)$, and $\eps\in(0,s]$. Then the embeddings $D^s_0(\Omega)\hookrightarrow D^{s-\eps}_0(\Omega)$ and 
$D_0^s(\Omega)\hookrightarrow L^p(\Omega)$ are compact.
\end{theo}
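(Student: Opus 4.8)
The plan is to prove the compactness of $D^s_0(\Omega)\hookrightarrow D^{s-\eps}_0(\Omega)$ by a Fourier‑side argument in the spirit of Rellich's lemma, and then to deduce the compactness of $D^s_0(\Omega)\hookrightarrow L^p(\Omega)$ from it by interpolation. Throughout I would use two facts that hold because $\Omega$ is bounded: by the definition of $\mathcal H^s_0(\Omega)$ and the norm equivalence \eqref{eq:norm_equiv_h0s}, every $u\in D^s_0(\Omega)$ vanishes a.e.\ outside $\Omega$ and satisfies $|u|_2\le C\|u\|_s$ (hence also $|u|_1\le C|\Omega|^{1/2}\|u\|_s$), so that the Fourier transform of such a $u$ is, up to a constant, the bounded $C^1$ function $\xi\mapsto\int_\Omega u(x)e^{-ix\cdot\xi}\dx$.

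First I would fix a bounded sequence $(u_n)\subset D^s_0(\Omega)$, say $\|u_n\|_s\le M$, and pass to a subsequence with $u_n\weakly u$ weakly in the Hilbert space $D^s_0(\Omega)$ (so $\|u\|_s\le M$); by the continuous embedding into $L^2(\Omega)$ and Plancherel's theorem, $\widehat{u_n}\weakly\widehat u$ weakly in $L^2(\R^N)$. On the other hand, the Paley--Wiener‑type bounds $|\widehat{u_n}(\xi)|\le C|u_n|_1\le C'$ and $|\widehat{u_n}(\xi)-\widehat{u_n}(\zeta)|\le C|u_n|_1(\sup_{x\in\Omega}|x|)|\xi-\zeta|\le C''|\xi-\zeta|$ show that $(\widehat{u_n})$ is equibounded and equi‑Lipschitz on $\R^N$; by the Arzelà--Ascoli theorem and a diagonal argument, a further subsequence satisfies $\widehat{u_n}\to g$ locally uniformly on $\R^N$, and comparing with the weak $L^2$‑limit forces $g=\widehat u$ a.e. Using \eqref{eq:equiv_fourier} and writing, for $R>0$,
\begin{equation*}
\|u_n-u\|_{s-\eps}^2=\int_{|\xi|\le R}|\xi|^{2(s-\eps)}|\widehat{u_n}(\xi)-\widehat u(\xi)|^2\d\xi+\int_{|\xi|>R}|\xi|^{2(s-\eps)}|\widehat{u_n}(\xi)-\widehat u(\xi)|^2\d\xi,
\end{equation*}
I would bound the high‑frequency term by $R^{-2\eps}\|u_n-u\|_s^2\le(2M)^2R^{-2\eps}$ (using $|\xi|^{-2\eps}\le R^{-2\eps}$ there), uniformly in $n$; and since $\eps\le s$ gives $2(s-\eps)\ge0$, the integrand in the low‑frequency term is dominated on $\{|\xi|\le R\}$ by the fixed integrable function $(2C')^2|\xi|^{2(s-\eps)}$ and tends to $0$ there uniformly, so that term vanishes as $n\to\infty$ by dominated convergence. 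Letting first $n\to\infty$ and then $R\to\infty$ yields $u_n\to u$ in $D^{s-\eps}_0(\Omega)$, proving compactness of the first embedding.

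For the second embedding I would apply the previous step with $\eps=s$, which (since $\|\cdot\|_0=|\cdot|_2$) gives $u_n\to u$ in $L^2(\Omega)$. When $p\in[1,2]$, Hölder on the bounded set $\Omega$ gives $|u_n-u|_p\le|\Omega|^{\frac1p-\frac12}|u_n-u|_2\to0$. When $p\in(2,\ds)$, pick $\theta\in(0,1)$ with $\frac1p=\frac\theta2+\frac{1-\theta}{\ds}$; since $(u_n)$ is bounded in $L^{\ds}(\Omega)$ by Theorem~\ref{thm:sobolev}, the interpolation inequality gives $|u_n-u|_p\le|u_n-u|_2^{\theta}|u_n-u|_{\ds}^{1-\theta}\le C|u_n-u|_2^{\theta}\to0$. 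In every case $u_n\to u$ in $L^p(\Omega)$, so $D^s_0(\Omega)\hookrightarrow L^p(\Omega)$ is compact.

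The statement is classical in nature (for $s\in(0,1)$ it is the fractional Rellich--Kondrachov theorem), so I do not expect a serious conceptual obstacle; the delicate point is to keep every estimate uniform in $n$ and valid for all $s>0$ simultaneously, which is why I would route the whole argument through the Fourier identity \eqref{eq:equiv_fourier} and the equivalence \eqref{eq:norm_equiv_h0s} rather than quoting embedding theorems usually stated only for $s<1$. Concretely, the genuine work lies in the equicontinuity of $(\widehat{u_n})$ — the Paley--Wiener estimate above — which is exactly what makes the low‑frequency part of $\|u_n-u\|_{s-\eps}^2$ tend to zero.
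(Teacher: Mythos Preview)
Your proof is correct and proceeds along a genuinely different route from the paper's. The paper does not argue directly: for the first embedding it invokes abstract interpolation theory from Triebel's book (showing that $\mathcal H^s_0(\Omega)$ is an interpolation space between $H^{\lceil s\rceil}_0(\Omega)$ and $L^2(\Omega)$, and then applying a general compactness theorem for interpolation spaces together with the classical Rellich embedding $H^{\lceil s\rceil}_0(\Omega)\hookrightarrow L^2(\Omega)$), and for the second it simply cites \cite[Theorem~1.5]{CT04}. Your argument, by contrast, is self-contained and elementary: you exploit the compact support of functions in $D^s_0(\Omega)$ to get Paley--Wiener equicontinuity of the Fourier transforms, then combine Arzelà--Ascoli on the low frequencies with the uniform $\|\cdot\|_s$-bound on the high frequencies. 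The paper's approach is shorter on the page but outsources the work to nontrivial external references; your approach makes transparent \emph{why} compactness holds (it is precisely the compact support that kills the low-frequency tail) and works uniformly for every $s>0$ without splitting into integer and fractional cases. Deducing the $L^p$-compactness from the $\eps=s$ case by $L^2$--$L^{2^\star_s}$ interpolation is also cleaner than quoting a separate theorem.
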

\begin{proof}
The compactness of the embedding $\mathcal H_0^s(\Omega)\hookrightarrow \mathcal H_0^{s-\eps}(\Omega)$ follows by interpolation theory, see~\cite{T78} (the space $\mathcal H_0^t(\Omega)$ is defined in~\cite[Sec.\,4.3.2 (1a)]{T78};
that $A=\mathcal H_0^s(\Omega)$ is an interpolation space between $A_0=H^{\lceil s \rceil}_0(\Omega)$ and $A_1=L^2(\Omega)$ is a consequence of \cite[Sec.\,4.3.2 Thm 2]{T78} together with~\cite[Sec.\,2.4.2 (10)]{T78}; finally, the compactness of the embedding $\mathcal H_0^s(\Omega) \hookrightarrow \mathcal H_0^{s-\eps}(\Omega)$ follows from 
\cite[Sec.\,1.16.4 Thm. 2 (a)]{T78} together with the compactness of the embedding $A_0\hookrightarrow A_1$, see~\cite[Sec.\,7.10]{GT01}). Then the compactness of $D^s_0(\Omega)\hookrightarrow D^{s-\eps}_0(\Omega)$ holds by the equivalence of norms \eqref{eq:norm_equiv_h0s}.  The embedding $D_0^s(\Omega)\hookrightarrow L^p(\Omega)$ is compact for $p\in[1,\ds)$ by~\cite[Theorem 1.5]{CT04}.
\end{proof}

\subsection{Symmetric setting}

Following~\cite{Clapp16,CLR18,CS20} we now present a series of results connecting the symmetric framework presented in the introduction and the variational structure of equation \eqref{eq:frac_crit_exp}. 

Let $G$ be a closed subgroup of $O(N)$ and let $\phi: G\to \mathbb{Z}_2:=\{-1,1\}$ be a continuous homomorphism of groups satisfying the properties {\bf ($A_1$)} and {\bf ($A_2$)} presented in the introduction. Let $\Omega$ be a $G$-invariant bounded smooth domain of $\mathbb R^N$ and recall the definition of $\phi$-equivariance given in \eqref{def:equi} and of the space $D_0^s(\Omega)^{\phi}$ given in \eqref{D0phi:def}. We say that $u\in D_0^s(\Omega)$ is a solution of 
\begin{equation}\label{eq:frac_domain}
(-\Delta)^su=|u|^{\ds-2}u, \qquad 
u\in D_0^s(\Omega),
\end{equation}
if $u$ is a critical point of the $C^1$-functional $J_s:D_0^s(\Omega)\to \R$ defined by
\begin{equation}\label{eq:func_RN}
J_s(u):=\frac{1}{2}\|u\|_s^2-\frac{1}{\ds}|u|_{\ds}^{\ds}.
\end{equation}
The next lemma is a type of principle of symmetric criticality in the $\phi$-equivariant setting, and it extends \cite[Lemma 3.1]{CS20} to the fractional setting.  For $\varphi\in C_c^\infty(\Omega)$ let
\begin{equation}\label{eq:def_phi}
\varphi_\phi(x):=\frac{1}{\mu(G)}\int_{G}\phi(g)\varphi(gx) \d{\mu},
\end{equation}
where $\mu$ is the Haar measure on G. In particular, $\varphi_\phi\in C_c^\infty(\Omega)^\phi$.

\begin{lem}\label{lem:deriv_func_equiv}Let $m\in\N_0$ and $\sigma\in[0,1]$ such that $s:=m+\sigma>0$.  If $u\in D_0^s(\Omega)^\phi$, then
\begin{equation*}
J_s^\prime(u)\varphi_\phi=J_s^\prime(u)\varphi \quad\textnormal{for every } \varphi\in C_c^\infty(\Omega).
\end{equation*}
Moreover, if $J_s^\prime(u)\vartheta=0$ for every $\vartheta\in C_c^\infty(\Omega)^\phi$, then $J_s^\prime(u)\varphi=0$ for every $\varphi\in C_c^\infty(\Omega)$.
\end{lem}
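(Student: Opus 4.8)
The plan is to exploit the linearity of $J_s'(u)$ in the test direction together with the $G$-invariance of all the objects involved (the seminorm $\|\cdot\|_s$, the $L^{\ds}$-norm, and the Haar measure $\mu$). The key observation is that $\varphi_\phi$ is obtained from $\varphi$ by an averaging over the group, and since $u$ is already $\phi$-equivariant, each ``twisted translate'' $x\mapsto \phi(g)\varphi(gx)$ contributes the same amount to $J_s'(u)$.

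First I would write out $J_s'(u)\psi = \cE_s(u,\psi) - \int_\Omega |u|^{\ds-2}u\,\psi\dx$ for $\psi\in C_c^\infty(\Omega)$, which is valid since $u\in D_0^s(\Omega)$ and the pairing $\cE_s(u,\psi)$ makes sense by \eqref{eq:equiv_fourier}–\eqref{bilin:def}. Next, for fixed $g\in G$ define $\psi_g(x):=\phi(g)\varphi(gx)$, so that $\varphi_\phi=\frac{1}{\mu(G)}\int_G \psi_g\,\d\mu(g)$. The core computation is the claim that $J_s'(u)\psi_g = J_s'(u)\varphi$ for every $g\in G$. For the nonlinear term this is immediate from the change of variables $y=gx$ (which preserves Lebesgue measure since $g\in O(N)$) together with the equivariance $u(gx)=\phi(g)u(x)$ and $|\phi(g)|=1$: one gets $\int_\Omega |u(x)|^{\ds-2}u(x)\phi(g)\varphi(gx)\dx = \int_\Omega |u(y)|^{\ds-2}u(y)\varphi(y)\dy$. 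For the bilinear term $\cE_s(u,\psi_g)$ I would argue the same way at the level of the Gagliardo-type expressions in \eqref{bilin:def}: the kernel $|x-y|^{-N-2\sigma}$ is invariant under the simultaneous action of $g$ on both variables, derivatives and powers of $(-\Delta)$ commute with the orthogonal action, and the equivariance of $u$ cancels the factor $\phi(g)$ against $\phi(g)$ appearing from $u$. Integrating the identity $J_s'(u)\psi_g=J_s'(u)\varphi$ over $g\in G$ against $\frac{1}{\mu(G)}\d\mu$ and using linearity of $\psi\mapsto J_s'(u)\psi$ (justified by Fubini/continuity of the linear functional, as $g\mapsto\psi_g$ is continuous into $C_c^\infty(\Omega)$ with support in a fixed compact set) yields $J_s'(u)\varphi_\phi = J_s'(u)\varphi$.

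For the second assertion, given arbitrary $\varphi\in C_c^\infty(\Omega)$ the function $\varphi_\phi$ lies in $C_c^\infty(\Omega)^\phi$ (this is stated just before the lemma, and follows from $\varphi_\phi(hx) = \frac{1}{\mu(G)}\int_G\phi(g)\varphi(ghx)\d\mu(g) = \phi(h)^{-1}\frac{1}{\mu(G)}\int_G\phi(gh)\varphi((gh)x)\d\mu(g) = \phi(h)\varphi_\phi(x)$ by right-invariance of the Haar measure and $\phi(h)^{-1}=\phi(h)$). Hence if $J_s'(u)\vartheta=0$ for all $\vartheta\in C_c^\infty(\Omega)^\phi$, we may take $\vartheta=\varphi_\phi$ and combine with the first part: $J_s'(u)\varphi = J_s'(u)\varphi_\phi = 0$.

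The main obstacle I anticipate is the rigorous justification of the $G$-invariance of the bilinear form $\cE_s(u,\cdot)$ when $s>1$, because the operator is then defined through a composition of $(-\Delta)^{m/2}$ (or $\partial_k(-\Delta)^{(m-1)/2}$) with $(-\Delta)^\sigma$, and one must check carefully that each of these pieces intertwines the $O(N)$-action correctly, including the $m$-odd case where a sum over $k$ of first derivatives appears and the orthogonal transformation mixes the components $\partial_k$. The cleanest route around this is to work in Fourier variables via \eqref{eq:equiv_fourier}: the Fourier transform conjugates the action of $g\in O(N)$ on functions to the (same) action on the frequency variable, $|\xi|^{2s}$ is radial hence invariant, and the equivariance of $u$ transfers verbatim to $\widehat u$; this reduces the whole identity to a single change of variables $\xi\mapsto g\xi$ with $|\det g|=1$. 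One should also note the edge cases $\sigma=0$ and $\sigma=1$ allowed by the hypothesis $m\in\N_0$, $\sigma\in[0,1]$, but the Fourier representation handles these uniformly.
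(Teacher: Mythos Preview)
Your proposal is correct and follows essentially the same strategy as the paper: show that each twisted translate $\psi_g(x)=\phi(g)\varphi(gx)$ yields the same value of $J_s'(u)$ as $\varphi$ itself, by a change of variables exploiting the $O(N)$-invariance of the kernel and the $\phi$-equivariance of $u$, then average over $G$. The paper carries this out explicitly for $m$ even and $\sigma\in(0,1)$ using the Gagliardo representation \eqref{bilin:def}, and declares the remaining cases analogous. Your suggestion to handle the bilinear form directly in Fourier variables via \eqref{eq:equiv_fourier} is a mildly different and arguably cleaner implementation: since $|\xi|^{2s}$ is radial and $\widehat{v\circ g}=\widehat v\circ g$ for $g\in O(N)$, one change of variables covers all parities of $m$ and all $\sigma\in[0,1]$ at once, sidestepping the component-mixing of the $\partial_k$'s under orthogonal maps in the $m$-odd case that you correctly identify as requiring extra bookkeeping.
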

\begin{proof} We show the case $m\in\N_0$ even and $\sigma\in(0,1)$. The other cases follow analogously.  First, notice that $(-\Delta)^{\frac{m}{2}}(v\circ g)=(-\Delta)^{\frac{m}{2}}v\circ g$ for every $v\in D_0^s(\Omega)$ and $g\in G$. So, if $u$ is $\phi$-equivariant, we have that $(-\Delta)^{\frac{m}{2}}u$ is $\phi$-equivariant too. Also, 
\begin{equation}\label{eq:iden_varphi_phi}
(-\Delta)^{\frac{m}{2}}(\varphi_\phi)(x)=\frac{1}{\mu(G)}\int_{G}(-\Delta)^{\frac{m}{2}}(\phi(g)\varphi\circ g)(x)\d{\mu}=\frac{1}{\mu(G)}\int_{G}\phi(g)(-\Delta)^{\frac{m}{2}}\varphi(gx)\d{\mu}
\end{equation}
and
\begin{align}
J_s^\prime(u)\varphi_{\phi} 
&=\cE_\sigma((-\Delta)^{\frac{m}{2}}u,(-\Delta)^{\frac{m}{2}}\varphi_\phi) -\int_{\Omega}|u(x)|^{p-2}u(x)\varphi_{\phi}(x)\d{x}=: J_1+J_2 \label{eq:deriv_phi_phi}.
\end{align}
For $J_2$, we use that $u$ is $\phi$-equivariant to obtain that
\begin{align}
J_2=\frac{1}{\mu(G)}\int_{\Omega}\int_{G}|u(x)|^{p-2}u(x)\phi(g)\varphi(gx)\d{\mu}\d{x} 
= \int_{\Omega}|u(y)|^{p-2}u(y)\varphi(y)\d{y}.\label{eq:J2_term}
\end{align}
For $J_1$ we argue as follows. Since $\varphi_\phi\in C_c^\infty(\Omega)^\phi$, we use~\eqref{eq:iden_varphi_phi} to obtain that
\begin{align*}
&\int_{\R^N}\int_{\R^N}\frac{[(-\Delta)^{\frac{m}{2}}u(x)-(-\Delta)^{\frac{m}{2}}u(y)] [\int_{G}\phi(g)((-\Delta)^{\frac{m}{2}}\varphi(gx)-(-\Delta)^{\frac{m}{2}}\varphi(gy))\,d\mu]}{|x-y|^{N+2\sigma}}\d{x}\d{y} \\
&=\int_{\R^N}\int_{\R^N}\int_{G}\frac{\left[(-\Delta)^{\frac{m}{2}}u(gx)-(-\Delta)^{\frac{m}{2}}u(gy)\right]\left[(-\Delta)^{\frac{m}{2}}\varphi(gx)-(-\Delta)^{\frac{m}{2}}\varphi(gy)\right]}{|x-y|^{N+2\sigma}}\d{\mu}\d{x}\d{y}.
\end{align*}
By setting the change of variable $\bar{x}=gx$ (resp. $\bar y=g y$) and using Fubini's theorem, we have that, for every $g\in G$,
\begin{align}\notag
J_1&=\frac{c_{N,\sigma}}{\mu(G)}\int_{G}\int_{\R^N}\int_{\R^N}\frac{[(-\Delta)^{\frac{m}{2}}u(\bar{x})-(-\Delta)^{\frac{m}{2}}u(\bar{y})][(-\Delta)^{\frac{m}{2}}\varphi(\bar{x})-(-\Delta)^{\frac{m}{2}}\varphi(\bar{y})]}{|g^{-1}(\bar{x}-\bar{y})|^{N+2\sigma}}\d{ x}\d{ y} \d{\mu}\\ \label{eq:J11_term}
& =c_{N,\sigma} \int_{\R^N}\int_{\R^N}\frac{[(-\Delta)^{\frac{m}{2}}u(\bar{x})-(-\Delta)^{\frac{m}{2}}u(\bar{y})][(-\Delta)^{\frac{m}{2}}\varphi(\bar{x})-(-\Delta)^{\frac{m}{2}}\varphi(\bar{y})]}{|\bar{x}-\bar{y}|^{N+2\sigma}}\d{\bar x}\d{\bar y}.
\end{align}
To conclude, it is enough to collect identities~\eqref{eq:J2_term}--\eqref{eq:J11_term} and replace into~\eqref{eq:deriv_phi_phi} to deduce that
$J_s^\prime(u)\varphi_{\phi}=J_s^\prime(u)\varphi$.  The rest of the proof follows immediately.
\end{proof}

As a consequence of the previous results, the non-trivial $\phi$-equivariant solutions of problem~\eqref{eq:frac_domain} belong to the Nehari set
\begin{equation*}\label{N:set}
\mathcal N_s^\phi(\Omega):=\left\{u\in D_0^s(\Omega)^\phi: u\neq 0,\ {\|u\|_s^2=|u|^{\ds}_{\ds}} \right\}.
\end{equation*}

Let
\begin{equation*}
c_s^{\phi}(\Omega):=\inf_{u\in \mathcal N_s^{\phi}(\Omega)} J_s(u). 
\end{equation*}
The following result gives some properties of $\mathcal N_s^\phi(\Omega)$ and $c^\phi_s(\Omega)$. 
\begin{lem}\label{lem:min_max}Let $s>0$.
\begin{enumerate}
\item[a)] There exists $a_0>0$ such that ${\|u\|_s\geq a_0}$ for every $u\in \mathcal N_s^\phi(\Omega).$
\item[b)] $\mathcal N_s^\phi(\Omega)$ is a $C^1$-Banach sub-manifold of $D_0^s(\Omega)$ and a natural constraint for $J_s$.
\item[c)] Let $\mathcal T:=\left\{\sigma\in C^0\left([0,1];D_0^{s}(\Omega)^\phi\right): \sigma(0)=0, \sigma(1)\neq 0, J_s(\sigma(1))\leq 0\right\}$. Then 
\begin{equation*}
c_s^\phi(\Omega)=\inf_{\sigma\in\mathcal T}\max_{t\in[0,1]}J_s(\sigma(t)).
\end{equation*}
\end{enumerate}
\end{lem}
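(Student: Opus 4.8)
The three assertions are the standard Nehari--manifold properties for a functional of the form $J_s(u)=\tfrac12\|u\|_s^2-\tfrac1{\ds}|u|_{\ds}^{\ds}$ with $\ds>2$ (which holds since $N>2s$), so the plan is to transcribe the classical arguments, taking care only at the one point where $\phi$-equivariance intervenes. For part a) it suffices to combine the Nehari identity $\|u\|_s^2=|u|_{\ds}^{\ds}$ with the Sobolev inequality of Theorem~\ref{thm:sobolev}, which gives $\|u\|_s^2\le\kappa_{N,s}^{\ds}\|u\|_s^{\ds}$; since $\|u\|_s>0$ for $u\in\mathcal N_s^\phi(\Omega)$, this forces $\|u\|_s\ge a_0:=\kappa_{N,s}^{-\ds/(\ds-2)}$.

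For part b) I would work with the $C^1$ map $\Psi_s(u):=\|u\|_s^2-|u|_{\ds}^{\ds}$ on the closed subspace $D_0^s(\Omega)^\phi$, so that $\mathcal N_s^\phi(\Omega)=\{u\in D_0^s(\Omega)^\phi\setminus\{0\}:\Psi_s(u)=0\}$. The one computation needed is $\Psi_s'(u)u=2\|u\|_s^2-\ds|u|_{\ds}^{\ds}=(2-\ds)\|u\|_s^2\le(2-\ds)a_0^2<0$ on $\mathcal N_s^\phi(\Omega)$ by part a); hence $0$ is a regular value of $\Psi_s$ on the open set $D_0^s(\Omega)^\phi\setminus\{0\}$, and the implicit function theorem shows $\mathcal N_s^\phi(\Omega)$ is a $C^1$-Banach submanifold of codimension one. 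For the natural-constraint property, if $u\in\mathcal N_s^\phi(\Omega)$ is a critical point of $J_s|_{\mathcal N_s^\phi(\Omega)}$, the Lagrange multiplier rule gives $\lambda\in\R$ with $J_s'(u)=\lambda\Psi_s'(u)$ in $(D_0^s(\Omega)^\phi)'$; evaluating at $u$ and using $J_s'(u)u=\Psi_s(u)=0$ together with $\Psi_s'(u)u\neq0$ yields $\lambda=0$, so $J_s'(u)$ vanishes on all of $D_0^s(\Omega)^\phi$, in particular on $C_c^\infty(\Omega)^\phi$. Lemma~\ref{lem:deriv_func_equiv} then upgrades this to $J_s'(u)\varphi=0$ for every $\varphi\in C_c^\infty(\Omega)$, i.e. $u$ solves \eqref{eq:frac_domain}; this is the only step that uses the $\phi$-equivariant structure.

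For part c) the plan is the usual sandwich. I would first record the mountain-pass geometry of $J_s$ on $D_0^s(\Omega)^\phi$: $J_s(0)=0$; by Theorem~\ref{thm:sobolev} one can fix $\rho\in(0,a_0)$ such that $\Psi_s(u)\ge\tfrac12\|u\|_s^2>0$, hence $J_s(u)\ge(\tfrac12-\tfrac1{\ds})\|u\|_s^2>0$, whenever $0<\|u\|_s\le\rho$; and $J_s(tu)\to-\infty$ as $t\to\infty$ for each fixed $u\neq0$. To bound the minimax value by $c_s^\phi(\Omega)$ from above, given $u\in\mathcal N_s^\phi(\Omega)$ I would take the ray $\sigma(t):=tTu$ with $T\ge1$ chosen so that $J_s(Tu)\le0$; since $u$ lies on the Nehari manifold, $r\mapsto J_s(ru)=\|u\|_s^2\big(\tfrac{r^2}{2}-\tfrac{r^{\ds}}{\ds}\big)$ is maximized over $[0,\infty)$ at $r=1$, so $\max_{[0,1]}J_s\circ\sigma=J_s(u)$, and taking the infimum over $\mathcal N_s^\phi(\Omega)$ gives $\inf_{\sigma\in\mathcal T}\max_{[0,1]}J_s\circ\sigma\le c_s^\phi(\Omega)$. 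For the reverse inequality I would show that every $\sigma\in\mathcal T$ meets $\mathcal N_s^\phi(\Omega)$: from $J_s(\sigma(1))\le0$ and $\sigma(1)\neq0$ one gets $\Psi_s(\sigma(1))<0$ and $\|\sigma(1)\|_s>\rho$; setting $t_{**}:=\sup\{t:\sigma(t)=0\}<1$ and $\widetilde t:=\inf\{t\ge t_{**}:\|\sigma(t)\|_s\ge\rho\}$, the curve is nonzero on $[\widetilde t,1]$ and $\Psi_s\circ\sigma$ is positive at $\widetilde t$ (where $\|\sigma(\widetilde t)\|_s=\rho$) and negative at $1$, so the intermediate value theorem gives $t_*\in(\widetilde t,1)$ with $\sigma(t_*)\in\mathcal N_s^\phi(\Omega)$; hence $\max_{[0,1]}J_s\circ\sigma\ge J_s(\sigma(t_*))\ge c_s^\phi(\Omega)$, and the two bounds together give the claimed identity.

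I expect the only delicate point to be, in the reverse inequality of part c), ensuring that the path crosses the Nehari manifold at a \emph{nonzero} function — this is why I truncate the path after its last visit to the origin and use the quantitative lower bound $\Psi_s\ge\tfrac12\|\cdot\|_s^2$ near $0$. Everything else is a routine adaptation of the classical Nehari and mountain-pass arguments to the present symmetric, possibly higher-order fractional setting.
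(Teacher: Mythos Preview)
Your proposal is correct and follows exactly the standard Nehari/mountain-pass argument that the paper invokes by citation (the paper's proof reads simply ``The proof follows exactly as in~\cite[Lemma 2.1]{CLR18} using \Cref{thm:sobolev}''). Your write-up spells out the details the paper omits, including the upgrade via Lemma~\ref{lem:deriv_func_equiv} in part~b), which the paper defers to the proof of Proposition~\ref{cor:nonfixed}; this is a harmless relocation, not a difference in approach.
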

\begin{proof}
The proof follows exactly as in~\cite[Lemma 2.1]{CLR18} using \Cref{thm:sobolev}. 
\end{proof}

For a $G$-invariant domain $\Omega$, let us denote by $\Omega^G$ the set of $G$-fixed points in $\Omega$, more precisely
\begin{equation*}
\Omega^G:=\left\{x\in \Omega: Gx=\{x\} \right\}.
\end{equation*}
The next result characterizes the least-energy level on domains with $G$-fixed points.
\begin{lem}\label{lem:c_infinity}Let $s>0$. If $\Omega^G\neq \emptyset$, then $c_s^\phi(\Omega)=c_s^\phi(\R^N)$.
\end{lem}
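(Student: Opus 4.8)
The plan is to prove the two inequalities $c_s^\phi(\Omega)\le c_s^\phi(\R^N)$ and $c_s^\phi(\Omega)\ge c_s^\phi(\R^N)$ separately. For the inequality $c_s^\phi(\Omega)\le c_s^\phi(\R^N)$, I would fix a point $\xi\in\Omega^G$; after translating we may assume $\xi=0\in\Omega^G$, so that $B_\rho(0)\subset\Omega$ for some $\rho>0$ and $B_\rho(0)$ is itself $G$-invariant (since $0$ is $G$-fixed and $G\subset O(N)$ preserves balls centered at $0$). Given any $u\in D^s(\R^N)^\phi$, I would use the scaling invariance \eqref{issues}: the rescaled function $u_{\lambda,0}(x)=\lambda^{\frac N2-s}u(\lambda x)$ satisfies $\|u_{\lambda,0}\|_s=\|u\|_s$ and $|u_{\lambda,0}|_{\ds}=|u|_{\ds}$, and it is still $\phi$-equivariant because $0$ is $G$-fixed (so $(gx)$ scales the same way as $x$). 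The issue is that $u_{\lambda,0}$ need not be compactly supported in $\Omega$, so I would first approximate $u$ by $\phi$-equivariant functions with compact support — e.g. multiply by a $G$-invariant cut-off $\eta_\phi$ (which equals $\eta$ since $\eta$ is radial hence $G$-invariant) and project onto the $\phi$-equivariant subspace via the averaging operator $(\cdot)_\phi$ of \eqref{eq:def_phi}, using density of $C_c^\infty(\R^N)^\phi$ in $D^s(\R^N)^\phi$. For such a compactly supported $\phi$-equivariant $v$, the rescaling $v_{\lambda,0}$ has support in $\lambda^{-1}\operatorname{supp} v$, which lies inside $B_\rho(0)\subset\Omega$ for $\lambda$ large. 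Thus $v_{\lambda,0}\in D^s_0(\Omega)^\phi$ with the same Nehari-type quantities, and after the usual normalization onto the Nehari manifold $\mathcal N_s^\phi(\Omega)$ (scaling $v_{\lambda,0}$ by the appropriate positive constant) the value of $J_s$ is unchanged; taking the infimum over $v$ and passing through the approximation gives $c_s^\phi(\Omega)\le c_s^\phi(\R^N)$.

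For the reverse inequality $c_s^\phi(\R^N)\ge c_s^\phi(\Omega)$, the point is simply that $D^s_0(\Omega)^\phi\subset D^s(\R^N)^\phi$ isometrically (functions in $D^s_0(\Omega)$ are defined on all of $\R^N$ and vanish outside $\Omega$, and the norm $\|\cdot\|_s$ and the $L^{\ds}$-norm are computed the same way in both spaces). Hence $\mathcal N_s^\phi(\Omega)\subset\mathcal N_s^\phi(\R^N)$, and taking infima of $J_s$ over the smaller set gives $c_s^\phi(\Omega)\ge c_s^\phi(\R^N)$. Combining the two inequalities yields the claim.

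The main obstacle is the first inequality, specifically handling the non-compact support of the rescaled entire solution: one cannot directly insert $u_{\lambda,0}$ into $D^s_0(\Omega)$ because $D^s_0(\Omega)$ is the closure of $C_c^\infty(\Omega)$, so genuine approximation is needed, and one must verify that the projection $(\cdot)_\phi$ onto the $\phi$-equivariant subspace is continuous on $D^s(\R^N)$ and preserves (or at least controls) the Nehari quantities — the energy quotient $\|v\|_s^2/|v|_{\ds}^2$ must stay close to that of the minimizer $u$ along the approximation. An alternative that avoids projecting is to start directly from a $\phi$-equivariant minimizing sequence $(u_k)\subset C_c^\infty(\R^N)^\phi$ for $c_s^\phi(\R^N)$ (which exists by Lemma 2.7(c) and density), rescale each $u_k$ so that its support fits in $B_\rho(0)$, and conclude $c_s^\phi(\Omega)\le\lim J_s(u_k)=c_s^\phi(\R^N)$; this is cleaner since each $u_k$ is already compactly supported and $\phi$-equivariant, and the only thing to check is that $G$-equivariance is preserved under dilation about the fixed point $0$, which is immediate from $g(\lambda x)=\lambda(gx)$ for $g\in O(N)$.
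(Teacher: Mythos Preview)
Your proposal is correct and, in particular, the ``alternative'' approach you describe at the end (taking a $\phi$-equivariant minimizing sequence $(\varphi_k)\subset C_c^\infty(\R^N)^\phi$ for $c_s^\phi(\R^N)$, rescaling each $\varphi_k$ about a fixed point $x_0\in\Omega^G$ so that its support lies in $\Omega$, and using that the rescaling preserves both $\phi$-equivariance and the Nehari constraint) is exactly the argument the paper gives. The reverse inequality via $\mathcal N_s^\phi(\Omega)\subset\mathcal N_s^\phi(\R^N)$ also matches the paper's proof verbatim.
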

\begin{proof}
From the inclusion $D_0^s(\Omega)\subset D^s(\R^N)$, we have that $\mathcal N_s^\phi(\Omega)\subset \mathcal N_s^\phi(\R^N)$, then
\begin{align*}
 c_s^\phi(\R^N)=\inf_{\mathcal N_s^\phi(\R^N)} J_s  \leq \inf_{\mathcal N_s^\phi(\Omega)} J_s=c_s^\phi(\Omega).
\end{align*}
For the converse, consider a sequence $(\varphi_k)_{k}$ in $\mathcal N_s^\phi(\mathbb \R^N)\cap C_c^\infty(\R^N)$ such that $J_s(\varphi_k)\to c_s^\phi(\R^N)$ and let $x_0\in \Omega^G$ and $\lambda_k>0$ such that 
$\varphi^\star_k(x):=\lambda_k^{-\frac{N}{2}+s}\varphi_k\left(\lambda_k^{-1}(x-x_0)\right)$ has support in $\Omega$. As $x_0$ is a $G$-fixed point, $\varphi_k^\star$ is $\phi$-equivariant. Thus $\varphi^\star_k\in\mathcal{N}^\phi(\Omega)$ and hence
$c_s^\phi(\Omega)\leq J_s(\varphi^\star_k)=J_s(\varphi_k)$ for all $k$. Letting $k\to+\infty$ we conclude that $c_s^\phi(\Omega)\leq c_s^\phi(\R^N)$. This ends the proof.
\end{proof} 

The following lemma can be found in~\cite[Lemma 2.4]{CLR18} or~\cite[Lemma 3.4]{CS20}.
\begin{lem}\label{lem:prop_G_seq}
If $G$ satisfies \textnormal{\textbf{($A_1$)}} then, for every pair of sequences $(\lambda_k)_{k\in\N}\subset(0,\infty)$ and $(x_k)_{k\in\N}\subset\R^N$, there exists $C_0>0$ and $(\xi_k)_{k\in\N}\subset\R^N$ such that, up to a subsequence, $\lambda_k^{-1}\textnormal{dist}(Gx_k,\xi_k)\leq C_0$ for all $k\in \N.$  Moreover, one of the following statements hold true: either $\xi_k\in \Omega^G$ or, for each $m\in \mathbb N$, there exist $g_1,\ldots,g_m\in G$ such that
$\lambda_k^{-1}|g_i\xi_k-g_j\xi_k|\to \infty$ as $k\to\infty$ if $i\neq j.$
\end{lem}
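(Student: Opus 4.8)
The plan is to distinguish two regimes according to whether the orbit $Gx_k$, seen at scale $\lambda_k$, stays near the $G$-fixed subspace or not. Let $F:=(\R^N)^G=\bigcap_{g\in G}\ker(g-I)$, a linear subspace of $\R^N$ on which $G$ acts trivially; since $G\subset O(N)$ preserves $F$, it preserves $F^\perp$ as well. Writing $x=x_\parallel+x_\perp\in F\oplus F^\perp$ for $x\in\R^N$, we get $gx=x_\parallel+gx_\perp$ for all $g\in G$, hence $\textnormal{dist}(Gx,x_\parallel)=|x_\perp|=\textnormal{dist}(x,F)$ and $|gx-hx|=|gx_\perp-hx_\perp|$ for all $g,h\in G$. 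I would set $r_k:=\textnormal{dist}(x_k,F)$ and pass to a subsequence so that $r_k/\lambda_k\to\ell\in[0,\infty]$.

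\emph{Regime $\ell<\infty$.} Take $\xi_k:=(x_k)_\parallel$, the orthogonal projection of $x_k$ onto $F$. Then $\textnormal{dist}(Gx_k,\xi_k)=r_k\le C_0\lambda_k$ with $C_0:=\sup_k r_k/\lambda_k<\infty$, and $\xi_k$ is a $G$-fixed point, which is the first alternative. (When $\Omega$ is bounded and $\Omega^G=\emptyset$ one typically has $\lambda_k\to0$ while the $x_k$ of interest lie in $\Omega$, so $r_k/\lambda_k\to\infty$ and this regime does not occur.)

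\emph{Regime $\ell=\infty$.} Take $\xi_k:=x_k$, so $\textnormal{dist}(Gx_k,\xi_k)=0$ and the distance bound holds trivially. Put $v_k:=(x_k)_\perp$; then $|v_k|=r_k>0$ for $k$ large and $|gx_k-hx_k|=|gv_k-hv_k|$. I would normalize, $u_k:=v_k/|v_k|$, and pass to a further subsequence so that $u_k\to u_\infty$ in the compact set $F^\perp\cap S^{N-1}$. Since $u_\infty\ne0$ lies in $F^\perp$, it is not $G$-fixed, so $Gu_\infty\ne\{u_\infty\}$, and here is where assumption \textbf{($A_1$)} enters: it forces $\dim(Gu_\infty)>0$, hence $Gu_\infty$ is an infinite set. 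Given $m\in\N$, choose $g_1,\dots,g_m\in G$ with $g_1u_\infty,\dots,g_mu_\infty$ pairwise distinct, set $\delta:=\min_{i\ne j}|g_iu_\infty-g_ju_\infty|>0$, and use continuity of the linear maps $x\mapsto g_ix$ together with $u_k\to u_\infty$ to get $|g_iu_k-g_ju_k|\ge\delta/2$ for $k$ large. Then
\[
\lambda_k^{-1}|g_i\xi_k-g_j\xi_k|=\lambda_k^{-1}|g_iv_k-g_jv_k|=\frac{|v_k|}{\lambda_k}\,|g_iu_k-g_ju_k|\ge\frac{\delta}{2}\cdot\frac{r_k}{\lambda_k}\longrightarrow\infty,
\]
which is the second alternative, completing the proof.

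The step I expect to be the crux is the regime $\ell=\infty$: since $x_k$ may fail to converge even after rescaling, one cannot read off the spreading of $Gx_k$ directly. The idea is to retain only the \emph{direction} $u_k=v_k/|v_k|$, which lives on a compact sphere, extract a limiting direction $u_\infty$, and use \textbf{($A_1$)} to ensure the limiting orbit is genuinely infinite (a nontrivial but \emph{finite} orbit would not be enough, and that is exactly the configuration \textbf{($A_1$)} rules out). Continuity of the $O(N)$-action then transfers the separation of finitely many points of $Gu_\infty$ back to $u_k$ for large $k$, and the factor $|v_k|=r_k\gg\lambda_k$ amplifies it into the required divergence. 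The rest — that $F$ is a $G$-invariant subspace with $GF^\perp\subseteq F^\perp$, and the subsequence bookkeeping — is routine.
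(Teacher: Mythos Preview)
Your argument is correct. The paper does not give its own proof of this lemma but defers to \cite[Lemma 2.4]{CLR18} and \cite[Lemma 3.4]{CS20}; your dichotomy on $r_k/\lambda_k$ with $r_k=\textnormal{dist}(x_k,(\R^N)^G)$, projection onto the fixed subspace in the bounded regime, and the compactness--normalization trick $u_k=v_k/|v_k|\to u_\infty$ combined with \textbf{($A_1$)} in the unbounded regime, is precisely the standard argument used there (note also that ``$\Omega^G$'' in the statement is a typo for $(\R^N)^G$, which you correctly read).
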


\subsection{Groups and homomorphism for sign-changing solutions}\label{subsec:Gphi}
In this section we present some symmetry groups and surjective homomorphisms that can be used to obtain different sign-changing $\phi$-equivariant solutions.  These groups and homomorphism were also used in~\cite[Lemma 3.2]{CLR18} and \cite[Lemma 4.2]{CS20}.

Let $r_\theta:\R^2\to\R^2$ be a rotation matrix (counterclockwise through an angle $\theta\in[0,\pi)$) and for $x=(x_1,x_2,x_3,x_4)^T\in \R^4$ let $R_\theta,\rho:\R^4\to \R^4$ be given by
\begin{align*}
 R_\theta x := 
 \begin{pmatrix}
r_\theta & 0 \\
0 & r_\theta
\end{pmatrix}
x\quad \text{ and }\quad 
\rho x := (x_3,x_4,x_1,x_2)^T.
\end{align*}
Let $\Upsilon$ be the subgroup generated by $\{R_\theta,\rho\::\: \theta\in [0,2\pi)\}$ and let $\phi:\Upsilon\to{\mathbb Z}_2$ be the homomorphism given by $\phi(R_\theta):=1$ for any $\theta\in [0,2\pi)$ and $\phi(\rho)=-1$. For $N\geq 4$, let $n:=\lfloor \frac{N}{4} \rfloor\geq 1$, $\Lambda_j:=O(N-4)$ if $j=1,\ldots,n-1$, and $\Lambda_n:=\{1\}$.  The $\Lambda_j$-orbit of a point $y\in \mathbb R^{N-4j}$ is an $(N-4j-1)$-dimensional sphere if $j=1,\ldots,n-1$, and it is a single point if $j=n$.  Define 
\begin{align}\label{G:def}
G_j:=(\Upsilon)^j\times \Lambda_j 
\end{align}
acting on $\R^N=\R^{4j}\times \R^{N-4j}$ by $(\gamma_1,\ldots,\gamma_j,\eta)(x_1,\ldots,x_j,y):=(\gamma_1x_1,\ldots,\gamma_jx_j,\eta y),$ where $\gamma_i\in\Upsilon$, $\eta\in \Lambda_j$, $x_i\in \R^4$, and let 
\begin{align}\label{phi:def}
\text{$\phi_j: G_j\to {\mathbb Z}_2$ be the homomorphism $\phi_j(\gamma_1,\ldots,\gamma_j,\eta):=\phi(\gamma_1)\cdots\phi(\gamma_j).$} 
\end{align}
The $G_j$-orbit of $(z_1,\ldots,z_j,y)$ is the product of orbits $G_j(z_1,\ldots,z_j,y) =  \Upsilon z_1 \times \cdots \times \Upsilon z_j \times \Lambda_j y.$ 

Note that $\phi_j$ is surjective and \textbf{($A_1$)}, \textbf{($A_2$)} are satisfied by $G_j$ and $\phi_j$ for each $j=1,\ldots,n$.  Moreover, if $u$ is $\phi_i$-equivariant, $v$ is $\phi_j$-equivariant with $i<j$, and $u(x)=v(x) \neq 0$ for some $x=(z_1,\ldots,z_j,y) \in \R^N$, then, as $u(z_1,\ldots,\varrho z_j,y) = u(z_1,\ldots,z_j,y)$  and $v(z_1,\ldots,\varrho  z_j,y) = -v(z_1,\ldots,z_j,y),$ we have that $u(z_1,\ldots,\varrho z_j,y) \neq v(z_1,\ldots,\varrho z_j,y)$. As a consequence, $u\neq v$, and Theorems \ref{main:thm:bdd} and \ref{main:thm:unbdd} yield the existence of at least $\lfloor \frac{N}{4} \rfloor$ nonradial sign-changing solutions, where $\lfloor x \rfloor$ denotes the greatest integer less than or equal to $x$.

\medskip

We refer to~\cite[Remark  4.3]{CS20} for an example of a $\phi_j$-equivariant function and for an explanation on why a similar construction is impossible for $N=1,2,3.$

\section{Uniform bounds and asymptotic estimates}\label{sec:bk}

Let $\Omega\subset\R^N$ be a smooth bounded domain.

\begin{lem} \label{A:l} Let $s>0$, $\delta\in(0,s)$, $s_k\in (s-\frac{\delta}{2},s+\frac{\delta}{2})$, and let $u_k\in D_0^{s_k}(\Omega)$ for $k\in\N$. There is $C>0$ depending only on $s,$ $\Omega$, and $\delta$ such that $\norme{u_k}_{s-\delta}\leq C \norme{u_k}_{s_k}$ for all $k\in\N.$
\end{lem}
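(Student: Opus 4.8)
The plan is to work on the Fourier side, where the homogeneous norms have the clean representation $\|u\|_t^2 = \int_{\R^N} |\xi|^{2t} |\widehat u(\xi)|^2 \, d\xi$ from \eqref{eq:equiv_fourier}. Fix $u_k \in D_0^{s_k}(\Omega)$; since $\Omega$ is bounded we may regard $u_k$ as an element of $\mathcal H_0^{s_k}(\R^N)$, and in particular $u_k \in L^2(\R^N)$ with, by \eqref{eq:norm_equiv_h0s}, a bound $|u_k|_2 \le \lambda_{1,s_k}^{-1/2}\|u_k\|_{s_k}$ coming from the Poincar\'e-type inequality $\lambda_{1,s_k} |u_k|_2^2 \le \|u_k\|_{s_k}^2$. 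The point is then to split the frequency integral for $\|u_k\|_{s-\delta}^2$ at $|\xi| = 1$ and estimate the low- and high-frequency parts separately against $|u_k|_2^2$ and $\|u_k\|_{s_k}^2$ respectively.

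Concretely, I would write
\begin{align*}
\|u_k\|_{s-\delta}^2 = \int_{|\xi| \le 1} |\xi|^{2(s-\delta)} |\widehat u_k|^2 \, d\xi + \int_{|\xi| > 1} |\xi|^{2(s-\delta)} |\widehat u_k|^2 \, d\xi.
\end{align*}
On $\{|\xi| \le 1\}$ we have $|\xi|^{2(s-\delta)} \le 1$ (using $s - \delta > 0$), so the first integral is at most $|u_k|_2^2$. On $\{|\xi| > 1\}$ we have $|\xi|^{2(s-\delta)} \le |\xi|^{2 s_k}$ precisely when $s - \delta \le s_k$, which holds since $s_k > s - \delta/2 > s - \delta$; hence the second integral is at most $\|u_k\|_{s_k}^2$. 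Combining, $\|u_k\|_{s-\delta}^2 \le |u_k|_2^2 + \|u_k\|_{s_k}^2$. It then remains to control $|u_k|_2^2$ by $\|u_k\|_{s_k}^2$ with a constant uniform in $k$: this is where the restriction $s_k \in (s - \delta/2, s + \delta/2)$ and the boundedness of $\Omega$ enter, via the first eigenvalue $\lambda_{1,s_k}(\Omega)$.

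The main obstacle is therefore the uniformity of the Poincar\'e constant: I need $\inf_k \lambda_{1,s_k}(\Omega) > 0$, i.e. that the first Dirichlet eigenvalue of $(-\Delta)^{s_k}$ does not degenerate to $0$ as $s_k$ ranges over the compact interval $[s-\delta/2, s+\delta/2]$. One clean way to see this without invoking continuity of eigenvalues: enclose $\Omega$ in a ball $B_R$ and use the Fourier/Sobolev characterization directly. For $u \in C_c^\infty(\Omega) \subset C_c^\infty(B_R)$, interpolating $|u|_2^2 = \int |\widehat u|^2$ between $\int |\xi|^{2 s_k}|\widehat u|^2 = \|u\|_{s_k}^2$ and, say, a bound on $\int_{|\xi|\le 1}|\widehat u|^2 \le |B_1| \|\widehat u\|_\infty^2 \lesssim |B_1|\, |u|_1^2 \le |B_1|\,|B_R|\,|u|_2^2$... actually the cleanest route is: split again at $|\xi| = \rho$ for a parameter $\rho > 0$, get $|u|_2^2 \le |B_\rho|\,\|\widehat u\|_\infty^2 + \rho^{-2s_k}\|u\|_{s_k}^2 \le C_N \rho^N \|u\|_{L^1(B_R)}^2 + \rho^{-2s_k}\|u\|_{s_k}^2 \le C_N \rho^N |B_R|\, |u|_2^2 + \rho^{-2 s_k}\|u\|_{s_k}^2$, and then choose $\rho$ small enough (depending only on $N$ and $R$) that $C_N \rho^N |B_R| \le 1/2$, absorbing the first term. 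This yields $|u|_2^2 \le 2 \rho^{-2 s_k}\|u\|_{s_k}^2$, and since $s_k$ lies in a fixed compact interval, $\rho^{-2 s_k}$ is bounded uniformly in $k$ (by $\max\{\rho^{-2(s+\delta/2)}, \rho^{-2(s-\delta/2)}\}$, using $\rho \le 1$). Passing to the $D_0^{s_k}$-closure gives the same bound for all $u_k \in D_0^{s_k}(\Omega)$. Plugging this back, $\|u_k\|_{s-\delta}^2 \le (1 + 2\rho^{-2s_k})\|u_k\|_{s_k}^2 \le C^2 \|u_k\|_{s_k}^2$ with $C$ depending only on $s$, $\delta$, and $\Omega$ (through $R$ and $N$), which is the claim. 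I would present the argument in this order: (1) Fourier splitting of $\|u_k\|_{s-\delta}^2$ at $|\xi|=1$; (2) the uniform Poincar\'e/$L^2$ bound via the second splitting at $|\xi| = \rho$ and the $L^1$-vs-$L^\infty$ Fourier estimate on $B_R$; (3) combine and track the dependence of constants.
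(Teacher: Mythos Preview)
Your proof is correct and follows the same core idea as the paper: Fourier-side frequency splitting combined with an $L^2$ bound coming from the boundedness of $\Omega$. The one difference is in how the $L^2$ term is handled. You split at $|\xi|=1$ and then prove separately a Poincar\'e inequality $|u_k|_2^2 \le C\|u_k\|_{s_k}^2$ that is \emph{uniform in $s_k$}, via a second splitting at $|\xi|=\rho$ and the Hausdorff--Young bound $\|\widehat u\|_\infty \le C|u|_1 \le C|B_R|^{1/2}|u|_2$. The paper instead splits at a free point $|\xi|=\eps$ and uses the Poincar\'e inequality only at the \emph{fixed} level $s-\delta$, namely $|u_k|_2^2 \le C_0\|u_k\|_{s-\delta}^2$, then chooses $\eps$ small so that the resulting term $\eps^{2(s-\delta)}C_0\|u_k\|_{s-\delta}^2 \le \tfrac12\|u_k\|_{s-\delta}^2$ can be absorbed into the left-hand side. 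The paper's route is marginally shorter because it never needs uniformity of the Poincar\'e constant in $s_k$; your route, on the other hand, gives that uniform Poincar\'e inequality as a self-contained byproduct, which is itself a useful statement.
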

\begin{proof}
We argue as in~\cite[Lemma 5.1]{JSW20}. By \eqref{eq:equiv_fourier}, \eqref{def:norm_s},
\begin{align*}
\|u_{k}\|^2_{s-\delta}&=\cE_{s-\delta}(u_{k},u_{k})=\int_{\R^N}|\xi|^{2(s-\delta)}|\widehat{u}_{k}|^2\d{\xi}\leq \eps^{2(s-\delta)}\norme{u_{k}}^2_{L^2(\R^N)}+\int_{|\xi|\geq \eps}|\xi|^{2(s-\delta)}|\widehat{u}_{k}|^2\d{\xi}
\end{align*}
for any $\eps\in(0,1]$, where $\widehat{u}_{k}$ is the Fourier transform of $u_{k}$. Then, using that $s_k<s+\frac{\delta}{2}$,
\begin{align}
\|u_{k}\|^2_{s-\delta}&\leq \eps^{2(s-\delta)}\norme{u_{k}}^2_{L^2(\R^N)}+\epsilon^{2(s-\delta-s_k)}\int_{\R^N}|\xi|^{2s_k}|\widehat{u}_{k}|^2\d{\xi} 
\leq \epsilon^{2(s-\delta)}\norme{u_{k}}^2_{L^2(\R^N)}+\eps^{-\delta}\norme{u_{k}}_{s_k}^2.\label{eq:est_usk:l}
\end{align}
Since $s_k>s-\frac{\delta}{2}$ we have, by Theorem~\ref{thm:rellich_type}, that $u_k\in D_0^{s-\delta}(\Omega)$ and, by the fractional Poincar\'e inequality (see e.g.~\cite[Proposition 3.3]{AJS18_loss}) and \eqref{eq:norm_equiv_h0s}, there exists $C_0>0$ only depending on $s$, $\delta$, and $\Omega$ such that $|u_{k}|^2_2\leq C_0 \|u_{k}\|^2_{s-\delta}$.  Fix $\eps=\min\{1,(\tfrac{1}{2C_0})^{\frac{1}{2(s-\delta)}}\}$, then, by \eqref{eq:est_usk:l}, $\norme{u_{k}}_{s-\delta}^2\leq C \norme{u_{k}}_{s_k}^2$, where $C=2\eps^{-\delta}$ depends only on $s,$ $\Omega$, and $\delta$.  
\end{proof}

\begin{lem}\label{aship:lem}
Let $(s_k)_{k\in\N}\subset (0,\infty)$ be such that $s_k\to s=m+\sigma$ as $k\to \infty$ with $m\in\N_0$ and $\sigma\in(0,1]$.  Let $w_k\in D^{s_k}(\R^N)$ be such that
\begin{align}\label{bd}
\|w_k\|_{s_k}<C\qquad \text{ for all $k\in\N$ and for some $C>0$},
\end{align}
then, up to a subsequence, there is $w\in D^{s}(\R^N)$ such that
 \begin{align}\label{aship}
 \eta w_k\to \eta w\qquad \text{ in $D^{s-\delta}(\R^N)$ as $k\to\infty$ for all $\eta\in C^\infty_c(\R^N)$ and all $\delta\in(0,s]$.}
 \end{align}
 In particular, for $p\in[1,2^\star_s)$,
\begin{align}\label{app}
w_k \to w\ \textnormal{in } L^p_{loc}(\R^N),\quad 
w_k \to w\ \textnormal{a.e. in } \R^N, \quad
w_k \to w\ \textnormal{in } H^m_{loc}(\R^N).
\end{align}
\end{lem}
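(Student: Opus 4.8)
The plan is to exploit the uniform bound \eqref{bd} together with the compactness provided by Lemma~\ref{A:l} and Theorem~\ref{thm:rellich_type}, applied on an exhausting sequence of balls. First I would fix $R>0$ and let $\eta_R\in C^\infty_c(B_{2R})$ with $\eta_R\equiv 1$ on $B_R$. The key point is that multiplication by $\eta_R$ maps $D^{s_k}(\R^N)$ into $D^{s_k}_0(B_{2R})$ with a norm bound that is \emph{uniform in $k$}: for $s_k$ in a bounded range the commutator estimates for the (possibly higher-order) fractional Laplacian give $\|\eta_R w_k\|_{s_k}\le C(R)(\|w_k\|_{s_k}+\|w_k\|_{L^2(B_{2R})})$, and the low-order term is in turn controlled by $\|w_k\|_{s_k}$ via the argument in Lemma~\ref{A:l} (Fourier splitting plus the Poincaré inequality on $B_{2R}$). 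Hence $(\eta_R w_k)_k$ is bounded in $D^{s_k}_0(B_{2R})$, and by Lemma~\ref{A:l} also bounded in $D^{s-\delta}_0(B_{2R})$ for every small $\delta>0$, uniformly in $k$.

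Next, since $s-\delta<s$, the embedding $D^{s-\delta+\delta/2}_0(B_{2R})\hookrightarrow D^{s-\delta}_0(B_{2R})$ is compact by Theorem~\ref{thm:rellich_type}; combined with the uniform bound in the slightly higher space $D^{s-\delta/2}_0(B_{2R})$ (apply Lemma~\ref{A:l} with a smaller loss), we extract for each fixed $R$ a subsequence along which $\eta_R w_k$ converges strongly in $D^{s-\delta}_0(B_{2R})$ and (by the other compact embedding in Theorem~\ref{thm:rellich_type}) in $L^p(B_{2R})$ for $p\in[1,2^\star_s)$ and a.e. A diagonal argument over $R=1,2,3,\dots$ and over $\delta=1/j$ produces a single subsequence and a limit function $w$, which is easily seen to be independent of $R$ and $\delta$; the a.e. convergence and Fatou together with \eqref{bd} give $w\in D^s(\R^N)$. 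For a general $\eta\in C^\infty_c(\R^N)$ with $\operatorname{supp}\eta\subset B_R$, write $\eta w_k=\eta(\eta_R w_k)$ and use that multiplication by the fixed smooth $\eta$ is continuous $D^{s-\delta}_0(B_{2R})\to D^{s-\delta}(\R^N)$ to upgrade to \eqref{aship}. The statement \eqref{app} is then immediate: $L^p_{loc}$ and a.e. convergence follow from the $B_{2R}$-level convergence just obtained, and $H^m_{loc}$ convergence follows since $s-\delta\ge m$ for $\delta$ small when $\sigma\in(0,1]$ (for $\sigma=1$ one has $s=m+1$ and may take $\delta<1$; for $\sigma\in(0,1)$, $s-\delta>m$ for small $\delta$), so convergence in $D^{s-\delta}$ controls the $H^m$ norm on bounded sets via \eqref{eq:norm_equiv_h0s}.

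I expect the main obstacle to be the \emph{$k$-uniform commutator bound} $\|\eta_R w_k\|_{s_k}\le C(R)\|w_k\|_{s_k}$ for $s_k$ ranging over an interval, especially in the higher-order regime $s>1$ where $(-\Delta)^{s_k}$ is defined through compositions of integer powers of $-\Delta$ with a fractional factor $(-\Delta)^{\sigma_k}$. One must track how the Leibniz-type remainder terms depend on $s_k$ and check that the constants do not blow up as $s_k\to s$; this is where the normalization constant $c_{N,\sigma_k}$ in \eqref{cnsigma} (which stays bounded away from $0$ and $\infty$ for $\sigma_k$ in a compact subset of $(0,1]$, with the usual care near $\sigma_k\to 1$) and standard interpolation/Hardy–Littlewood–Sobolev estimates from Section~\ref{sec:bk} enter. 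Once this uniform localization estimate is in hand, the rest is a routine compactness-plus-diagonalization argument.
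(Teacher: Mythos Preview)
Your overall strategy---localize by a smooth cutoff, obtain a $k$-uniform bound on $\|\eta w_k\|_{s_k}$, invoke Lemma~\ref{A:l} to pass to a $k$-independent space $D_0^{s-\delta/2}$, use the compact embedding of Theorem~\ref{thm:rellich_type}, diagonalize over an exhaustion and over $\delta$, and finish with Fatou to get $w\in D^s(\R^N)$---is exactly the route the paper takes.

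The one place where your plan diverges is precisely what you flag as the ``main obstacle'': the uniform multiplication bound $\|\eta w_k\|_{s_k}\le C(\eta)\|w_k\|_{s_k}$. You propose to attack this via commutator/Leibniz estimates in the integral-differential representation of $(-\Delta)^{s_k}$, tracking how the constants (including $c_{N,\sigma_k}$) behave as $\sigma_k$ varies. This can be done, but it is considerably more work than necessary. The paper handles it in one line (Lemma~\ref{lem:prod_co}) by working on the Fourier side: from $\|u\|_t^2=\int_{\R^N}|\xi|^{2t}|\widehat u|^2$ and the convolution theorem one gets $\|\eta w_k\|_{s_k}\le |\widehat\eta|_1\,\|w_k\|_{s_k}$, with a constant $|\widehat\eta|_1$ that is manifestly independent of $s_k$. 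This completely sidesteps the higher-order bookkeeping and makes the behavior of $c_{N,\sigma_k}$ near $\sigma_k\to 1$ irrelevant, since the Fourier definition never sees that normalization. With this in hand, the rest of your plan collapses to the paper's short argument.
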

\begin{proof}
 Let $C$, $\delta$, $w_k$ be as in the statement, $\eta\in C^\infty_c(\R^N)$, and $K:=\operatorname{supp}\eta$. 
In the following $M>0$ denotes possibly different constants depending at most on $C$, $N$, $s$, $\delta$, and $\eta$. Then, by Lemmas~\ref{A:l} and \ref{lem:prod_co}, up to a subsequence, $\|\eta w_k\|_{s-\frac{\delta}{2}} \leq M\|\eta w_k\|_{s_k}\leq M$ for all $k\in\N$.  By Theorem~\ref{thm:rellich_type}, up to a subsequence, $\eta w_k \to \eta w$  in $D^{s-\delta}(\R^N)$ as $k\to\infty$ and \eqref{aship} follows.  Moreover, by \eqref{eq:equiv_fourier} and Fatou's Lemma, 
\begin{equation}\label{fatou}
\|w\|^2_{s}=
\int_{\R^N}|\xi|^{2s}|\widehat w(\xi)|^2\d{\xi}
\leq \liminf_{k\to \infty}
\int_{\R^N}|\xi|^{2{s_k}}|\widehat w_{k}(\xi)|^2\d{\xi}
=\liminf_{k\to\infty}\|w_{k}\|_{{s_k}}^2 < C,
\end{equation}
and therefore $w\in D^s(\R^N)$. The convergence \eqref{app} follows from Theorem~\ref{thm:rellich_type}.
\end{proof}

\begin{lem}\label{lem:sigma}
For every $k\in\N$ let $\sigma_k\in(0,1)$ and $w_k\in D^{\sigma_k}(\R^N)$ be
such that $\lim_{k\to\infty}\sigma_k=:\sigma\in[0,1]$, $\|w_k\|_{\sigma_k}<C$ for all $k\in\N$ and for some $C>0$, and
\begin{align}\label{l2loc}
 w_k\to 0\qquad \text{ in $L^2_{loc}(\R^N)$ as $k\to\infty$}.
 \end{align}
Then, up to a subsequence,
 \begin{align*}
\|w_k\eta\|^2_{\sigma_k}\leq \cE_{\sigma_k}(w_k,\eta^2w_k)+o(1)\quad \text{ as }k\to\infty \text{ for all }\eta\in C^\infty_c(\R^N).
 \end{align*}
\end{lem}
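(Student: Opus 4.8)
The goal is to compare the full Gagliardo energy $\|w_k\eta\|_{\sigma_k}^2 = \cE_{\sigma_k}(\eta w_k,\eta w_k)$ with $\cE_{\sigma_k}(w_k,\eta^2 w_k)$. The plan is to expand both bilinear forms using the kernel representation \eqref{bilin:def}, i.e.
\[
\cE_{\sigma_k}(u,v)=\frac{c_{N,\sigma_k}}{2}\int_{\R^N}\int_{\R^N}\frac{(u(x)-u(y))(v(x)-v(y))}{|x-y|^{N+2\sigma_k}}\d x\d y,
\]
and subtract. Writing $u=w_k$, $\eta=\eta$ and using the algebraic identity
\[
(\eta(x)w_k(x)-\eta(y)w_k(y))^2-(w_k(x)-w_k(y))(\eta^2(x)w_k(x)-\eta^2(y)w_k(y))=-w_k(x)w_k(y)(\eta(x)-\eta(y))^2,
\]
one gets exactly
\[
\|w_k\eta\|_{\sigma_k}^2-\cE_{\sigma_k}(w_k,\eta^2 w_k)=-\frac{c_{N,\sigma_k}}{2}\int_{\R^N}\int_{\R^N}\frac{w_k(x)w_k(y)(\eta(x)-\eta(y))^2}{|x-y|^{N+2\sigma_k}}\d x\d y.
\]
So the statement reduces to showing that this remainder term is $o(1)$ as $k\to\infty$ (it suffices to show it tends to $0$; a two-sided bound is not needed since we only want the inequality with $+o(1)$, though in fact the argument gives convergence to $0$).

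\textbf{Controlling the remainder.} Let $K=\operatorname{supp}\eta$ and let $K'$ be a compact neighbourhood of $K$, say $K'=\{x:\operatorname{dist}(x,K)\le 1\}$. The kernel $(\eta(x)-\eta(y))^2|x-y|^{-N-2\sigma_k}$ vanishes unless $x\in K$ or $y\in K$, so by symmetry it suffices to bound the integral over $x\in K$, $y\in\R^N$. Split this into the region $y\in K'$ and $y\in\R^N\setminus K'$. On $\{x\in K,\ y\notin K'\}$ we have $|x-y|\ge 1$ and $\eta(y)=0$, so $(\eta(x)-\eta(y))^2=\eta(x)^2$, and the contribution is bounded by
\[
\frac{c_{N,\sigma_k}}{2}\Big(\int_K|\eta(x)w_k(x)|\ \Big(\int_{|x-y|\ge1}\frac{|w_k(y)|}{|x-y|^{N+2\sigma_k}}\d y\Big)\d x\Big);
\]
the inner integral is bounded via Hölder by $\big(\int_{|z|\ge1}|z|^{-(N+2\sigma_k)\,(2^\star_{\sigma_k})'}\d z\big)^{1/(2^\star_{\sigma_k})'}|w_k|_{2^\star_{\sigma_k}}$, which is finite and uniformly bounded because $\|w_k\|_{\sigma_k}<C$ together with the Sobolev inequality (Theorem~\ref{thm:sobolev}) bound $|w_k|_{2^\star_{\sigma_k}}\le \kappa_{N,\sigma_k}C$, and one checks $\kappa_{N,\sigma_k}$ stays bounded as $\sigma_k\to\sigma$ using \eqref{eq:best_constant} (when $\sigma=0$ the exponent $2^\star_{\sigma_k}\to 2$ and one argues slightly differently, bounding $|w_k(y)|$ in the tail by its $L^2_{loc}$-to-$L^2$ growth — but since $y$ ranges over the unbounded set while $x$ is compact, Hölder with the $L^{2^\star}$ norm still works as long as $\sigma_k>0$). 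Then the outer factor $\int_K|\eta w_k|$ is controlled by $|w_k|_{L^1(K)}\le |K|^{1/2}|w_k|_{L^2(K)}\to 0$ by \eqref{l2loc}. So the far-region contribution is $o(1)$.

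\textbf{The near region.} On $\{x\in K,\ y\in K'\}$ both points lie in the fixed compact set $K'$, and here $(\eta(x)-\eta(y))^2\le \|\nabla\eta\|_\infty^2|x-y|^2$, so
\[
\frac{c_{N,\sigma_k}}{2}\int_K\int_{K'}\frac{|w_k(x)||w_k(y)|(\eta(x)-\eta(y))^2}{|x-y|^{N+2\sigma_k}}\d x\d y
\le \frac{c_{N,\sigma_k}\|\nabla\eta\|_\infty^2}{2}\int_{K'}\int_{K'}\frac{|w_k(x)||w_k(y)|}{|x-y|^{N+2\sigma_k-2}}\d x\d y.
\]
Since $\sigma_k<1$, the exponent $N+2\sigma_k-2<N$, so $|z|^{2-2\sigma_k-N}$ is locally integrable; bounding $2|w_k(x)||w_k(y)|\le w_k(x)^2+w_k(y)^2$ and integrating out one variable over the bounded set $K'$ gives a bound of the form $M\,c_{N,\sigma_k}\,|w_k|_{L^2(K')}^2$. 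By \eqref{l2loc} this is $o(1)$, provided $c_{N,\sigma_k}$ and the constant from $\int_{K'}|z|^{2-2\sigma_k-N}\d z$ stay bounded as $\sigma_k\to\sigma\in[0,1]$; from \eqref{cnsigma} one has $c_{N,\sigma_k}\asymp \sigma_k(1-\sigma_k)$, which actually \emph{vanishes} as $\sigma_k\to 0$ or $1$, and the singular integral $\int_{K'}|z|^{2-2\sigma_k-N}\d z$ blows up like $(1-\sigma_k)^{-1}$ as $\sigma_k\to1$ but the product $c_{N,\sigma_k}(1-\sigma_k)^{-1}\asymp\sigma_k$ stays bounded; for $\sigma_k\to0$ there is no blow-up at all. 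Hence the near-region contribution is also $o(1)$.

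\textbf{Main obstacle.} The only delicate point is the uniform-in-$k$ control of the constants as $\sigma_k$ approaches the endpoints $0$ and $1$ — one must track the precise behaviour of $c_{N,\sigma_k}$ from \eqref{cnsigma} against the (possibly singular) geometric constants coming from the kernel integrals, and against the degeneration of the Sobolev exponent $2^\star_{\sigma_k}$ when $\sigma=0$. Once one observes the fortunate cancellation $c_{N,\sigma_k}\asymp\sigma_k(1-\sigma_k)$ absorbing the $(1-\sigma_k)^{-1}$ singularity, everything else is a routine splitting-and-Hölder estimate combined with the hypothesis \eqref{l2loc}. Passing to a subsequence (as in the statement) is only needed insofar as earlier lemmas were applied along subsequences; the estimate itself holds for the full sequence.
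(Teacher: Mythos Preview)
Your argument is correct and follows the paper's proof closely: the same algebraic identity (though you have a sign slip --- the right-hand side of your displayed identity should be $+w_k(x)w_k(y)(\eta(x)-\eta(y))^2$, not minus; this is harmless since you then show the term is $o(1)$ in absolute value) and the same near/far decomposition around $K'=\{x:\operatorname{dist}(x,K)\le 1\}$. The far-region estimate is essentially identical to the paper's.

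The one genuine difference is in the near region. The paper bounds the convolution-type integral $\int_{K'}\int_{K'}|w_k(x)||w_k(y)|\,|x-y|^{-(N-2(1-\sigma_k))}\d x\d y$ via the sharp Hardy--Littlewood--Sobolev inequality, whose constant carries a factor $\Gamma(1-\sigma_k)\sim(1-\sigma_k)^{-1}$ that cancels against $c_{N,\sigma_k}\asymp\sigma_k(1-\sigma_k)$, and then passes from an $L^p$ norm with $p<2$ to $L^2$ by H\"older on the bounded set. You instead use the elementary bound $2|w_k(x)||w_k(y)|\le w_k(x)^2+w_k(y)^2$ and integrate out one variable directly; the resulting $\int_{K'}|z|^{2-2\sigma_k-N}\d z$ supplies the same $(1-\sigma_k)^{-1}$ factor, and you land on $|w_k|_{L^2(K')}^2$ immediately. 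Your route is more elementary and avoids invoking HLS; the paper's route is slightly sharper in principle (it would give an $L^p$ bound for some $p<2$) but that extra precision is not used anywhere.
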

\begin{proof}
Let $\eta\in C^\infty_c(\R^N)$, then
\begin{align}\label{part0}
 \|\eta w_k\|_{\sigma_k}^2-\cE_{\sigma_k}(w_k,\eta^2w_k)
 =\frac{c_{N,{\sigma_k}}}{2}\int_{\R^N}\int_{\R^N}\frac{w_k(x)w_k(y)|\eta(x)-\eta(y)|^2}{|x-y|^{N+2{\sigma_k}}}\ \d{x}\d{y}.
\end{align}
Let $K$ be the support of $\eta$ and let
$U:=\{x\in\R^N:\operatorname{dist}(x,K)\leq 1\}$. In the following $C>0$ denotes possibly different constants depending at most on $N$ and $\eta$.
By Fubini's theorem, Cauchy-Schwarz inequality, and \eqref{l2loc},
\begin{align}
&c_{N,{\sigma_k}}\left|\int_{\R^N\setminus U}\int_{\R^N}\frac{w_k(x)w_k(y)|\eta(x)-\eta(y)|^2}{|x-y|^{N+2{\sigma_k}}}\d{y}\d{x}\right|\leq c_{N,{\sigma_k}}\int_{K}|w_k(y)|\int_{\R^N\setminus U}\frac{|w_k(x)||\eta(y)|^2}{|x-y|^{N+2{\sigma_k}}}\d{x}\d{y}\notag\\
&\qquad\leq c_{N,{\sigma_k}}\left(\int_{K}|w_k|^2\right)^\frac{1}{2}
\left(\int_{K}\left(\int_{\R^N\setminus U}|w_k(x)|\frac{|\eta(y)|^2}{|x-y|^{N+2{\sigma_k}}}\d{x}\right)^2\d{y}\right)^\frac{1}{2}=o(1)\quad \text{ as }k\to\infty,\label{part1}
\end{align}
where we used that $c_{N,{\sigma_k}}$ is uniformly bounded by \eqref{cnsigma} and that, by H\"older's inequality, Theorem \ref{thm:sobolev}, and the bound $\|w_k\|_{\sigma_k}<C$,
\begin{align*}
 &\int_{K}\left(\int_{\R^N\setminus U}|w_k(x)|\frac{|\eta(y)|^2}{|x-y|^{N+2{\sigma_k}}}\d{x}\right)^2\d{y}\leq C |w_k|_{2^\star_{\sigma_k}}^2
 \int_{K}\left(\int_{\R^N\setminus U}|x-y|^{-2N}\d{x}\right)^\frac{N+2{\sigma_k}}{N}\d{y}<C.
\end{align*}
 On the other hand, by Fubini's theorem,
\begin{align}
&\int_{U}w_k(x)\int_{\R^N}w_k(y)\frac{|\eta(x)-\eta(y)|^2}{|x-y|^{N+2{\sigma_k}}}\d{y}\d{x}
=\int_{\R^N}w_k(y)\int_{U}w_k(x)\frac{|\eta(x)-\eta(y)|^2}{|x-y|^{N+2{\sigma_k}}}\d{x}\d{y}\notag\\
&=\int_{\R^N\backslash U}w_k(y)\int_{K}w_k(x)\frac{|\eta(x)|^2}{|x-y|^{N+2{\sigma_k}}}\d{x}\d{y}
+\int_{U}w_k(y)\int_{U}w_k(x)\frac{|\eta(x)-\eta(y)|^2}{|x-y|^{N+2{\sigma_k}}}\d{x}\d{y}.\label{part12}
\end{align}

The first summand is $o(1)$ as in \eqref{part1}. For the second summand, by the mean value theorem,
 \begin{align*}
I&:= c_{N,{\sigma_k}}\int_{U}\int_{U}\frac{w_k(y)w_k(x)|\eta(x)-\eta(y)|^2}{|x-y|^{N+2{\sigma_k}}}\d{x}\d{y}\leq Cc_{N,{\sigma_k}}\int_{U\times U}\frac{w_k(y)w_k(x)}{|x-y|^{N-2(1-{\sigma_k})}}\d{(x,y)}.
\end{align*}
Using the Hardy-Littlewood-Sobolev inequality (see~\cite[Theorem 4.3]{ll01} with $\eps=2(1-{\sigma_k})$, $\lambda=N-\eps$, $p=r=\frac{2}{\frac{\eps}{N}+1}<2$), \eqref{cnsigma}, and the boundedness of $U$,
\begin{align}
I&\leq Cc_{N,{\sigma_k}}\left( 
\pi^{\frac{N-2(1-{\sigma_k})}{2}}\frac{\Gamma(\frac{N}{2}-\frac{N-2(1-{\sigma_k})}{2})}{\Gamma(N-\frac{N-2(1-{\sigma_k})}{2})}
\left(\frac{\Gamma(\frac{N}{2})}{\Gamma(N)}
\right)^{\frac{N-2(1-{\sigma_k})}{N}-1}
\right) \left( \int_{U} |w_k|^p \right)^{1/p}\notag\\
&\leq C\sigma_k(1-\sigma_k)
\Gamma(1-{\sigma_k})
\left( \int_{U} |w_k|^p \right)^{1/p}\leq C\left(\int_{U}|w_k|^2\right)^{1/2}=o(1)\qquad \text{ as }k\to\infty.\label{part2}
\end{align}
The claim now follows from \eqref{part0}, \eqref{part1}, \eqref{part12}, and \eqref{part2}.
\end{proof}

To estimate all lower-order terms, we use the following.

\begin{lem}\label{lem:alpha}
 Let $(\sigma_k)_{k\in\N}\subset (0,1)$, $m\in\N$, and $\sigma\in[0,1]$ such that  $s_k:=m+\sigma_k\to s:=m+\sigma>0$ as $k\to \infty$.  For $k\in\N$, let $w_k\in D^{s_k}(\R^N)$ be such that \eqref{bd} holds and
\begin{align}\label{aship2}
 w_k\to 0\qquad \text{ pointwisely in $\R^N$ as $k\to\infty$.}
 \end{align} 
Let $\alpha\in \N^N_0$ be a multi-index such that $|\alpha|<m$, then, up to a subsequence,
 \begin{align*}
  \|\psi\partial^\alpha w_k \|_{\sigma_k} = o(1)\quad \text{ as }k\to\infty \text{ for all $\psi\in C^\infty_c(\R^N)$}.
 \end{align*}
\end{lem}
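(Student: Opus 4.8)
The goal is to show that for a multi-index $\alpha$ with $|\alpha|<m$ and $\psi\in C_c^\infty(\R^N)$, the seminorm $\|\psi\partial^\alpha w_k\|_{\sigma_k}$ vanishes along a subsequence as $k\to\infty$. Since $\sigma_k\in(0,1)$, this is a purely ``fractional of order $<1$'' statement, so $\|\cdot\|_{\sigma_k}^2$ is the Gagliardo seminorm $\frac{c_{N,\sigma_k}}{2}\iint |v(x)-v(y)|^2|x-y|^{-N-2\sigma_k}$. The first step is to reduce from $\partial^\alpha w_k$ to $w_k$ itself: since $|\alpha|<m$, the bound \eqref{bd} on $\|w_k\|_{s_k}$ with $s_k=m+\sigma_k$ controls, via \eqref{bilin:def} (the $m$ odd/even formulas) and standard elliptic interior estimates, the $H^m$-norm of $w_k$ on a neighborhood $U$ of $\operatorname{supp}\psi$; hence $\|\partial^\alpha w_k\|_{H^1(U)}\leq M$ uniformly, and in fact $\partial^\alpha w_k$ is bounded in $D^{1}$ locally. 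Combined with \eqref{aship2} and the $H^m_{loc}$ convergence already available from Lemma~\ref{aship:lem} (applied with the limit $w$, which must then be $0$ by uniqueness of pointwise limits), we get $\partial^\alpha w_k\to 0$ in $L^2_{loc}(\R^N)$, and even a uniform local bound on its full gradient.

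\textbf{Main estimate.} Write $v_k:=\partial^\alpha w_k$, so $v_k$ is bounded in $H^1(U)$ with $U$ a bounded neighborhood of $K:=\operatorname{supp}\psi$, and $v_k\to 0$ in $L^2(U)$. I would split the Gagliardo integral for $\psi v_k$ into a ``near-diagonal'' part $|x-y|\le 1$ and a ``far'' part $|x-y|>1$. For the far part, $|\psi(x)v_k(x)-\psi(y)v_k(y)|^2 \le 2(|\psi v_k(x)|^2+|\psi v_k(y)|^2)$ and $\iint_{|x-y|>1}(\cdots)|x-y|^{-N-2\sigma_k}\le C|v_k|_{L^2(K)}^2 \to 0$, since the kernel is integrable in one variable and $c_{N,\sigma_k}$ is uniformly bounded by \eqref{cnsigma}. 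For the near-diagonal part, use the product rule estimate
\[
|\psi(x)v_k(x)-\psi(y)v_k(y)|^2 \le 2\|\psi\|_\infty^2|v_k(x)-v_k(y)|^2 + 2\|\nabla\psi\|_\infty^2|x-y|^2|v_k(y)|^2,
\]
so the near-diagonal integral is bounded by $C\|\psi\|_\infty^2\,\mathcal{I}_k + C\|\nabla\psi\|_\infty^2\,|v_k|_{L^2(U)}^2$ where $\mathcal{I}_k:=c_{N,\sigma_k}\iint_{x,y\in U,\,|x-y|\le 1}|v_k(x)-v_k(y)|^2|x-y|^{-N-2\sigma_k}$ and I used $c_{N,\sigma_k}\int_{|z|\le1}|z|^{2-N-2\sigma_k}\,dz = c_{N,\sigma_k}\cdot\frac{|\mathbb{S}^{N-1}|}{2(1-\sigma_k)}\le C$ (the $(1-\sigma_k)$ in the denominator is cancelled by the factor $\sigma_k(1-\sigma_k)$ in $c_{N,\sigma_k}$). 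The second term $\to 0$ by $L^2_{loc}$-convergence, so everything reduces to controlling $\mathcal{I}_k$.

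\textbf{Bounding $\mathcal{I}_k$ by $L^2$.} This is the crux, and it is exactly the mechanism already used in Lemma~\ref{lem:sigma}'s estimate \eqref{part2}: since $v_k\in H^1(U)$, write $v_k(x)-v_k(y)=\int_0^1 \nabla v_k(y+t(x-y))\cdot(x-y)\,dt$, Cauchy–Schwarz in $t$, and then
\[
\mathcal{I}_k \le c_{N,\sigma_k}\iint_{U\times U,\,|x-y|\le1}\frac{\int_0^1|\nabla v_k(y+t(x-y))|^2\,dt}{|x-y|^{N-2(1-\sigma_k)}}\,dx\,dy.
\]
Because $|\alpha|<m$, $\nabla v_k=\nabla\partial^\alpha w_k$ involves derivatives of order $|\alpha|+1\le m$ of $w_k$, hence is bounded in $L^2(U)$; after the change of variables $z=x-y$ and absorbing the $t$-integral (a convolution-type bound, or a direct Hardy–Littlewood–Sobolev application with exponents as in \eqref{part2}), one gets $\mathcal{I}_k \le C c_{N,\sigma_k}(1-\sigma_k)^{-1}\|\nabla v_k\|_{L^2(U)}^{?}$ — but here is the subtlety: a naive bound only gives $\mathcal{I}_k\le C\|\nabla v_k\|_{L^2(U)}^2$, which is merely bounded, not $o(1)$. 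To get the $o(1)$ I would instead interpolate: split the near-diagonal region at $|x-y|\le\rho$ versus $\rho<|x-y|\le1$. On $|x-y|\le\rho$ use the $\nabla v_k$ bound to get a contribution $\le C\rho^{2-2\sigma_k}\|\nabla v_k\|_{L^2(U)}^2 \le C\rho^{2(1-\sigma_k)}$, which is small uniformly in $k$ once $\rho$ is small (using $\sigma_k$ bounded away from $1$, or if $\sigma_k\to1$ using that $\rho^{2(1-\sigma_k)}$ can still be made uniformly small for $\sigma_k$ in a compact subset of $[0,1)$ — and if $\sigma=1$ one reasons slightly differently, replacing the role of the Gagliardo seminorm). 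On $\rho<|x-y|\le1$, the kernel $|x-y|^{-N-2\sigma_k}\le\rho^{-N-2}$ is bounded, so $\iint(\cdots)\le C\rho^{-N-2}|v_k|_{L^2(U)}^2\to0$ as $k\to\infty$ for each fixed $\rho$. Thus: given $\eps>0$, fix $\rho$ small so the first piece is $<\eps$, then let $k\to\infty$ to kill the second piece; hence $\limsup_k\mathcal{I}_k\le\eps$ for all $\eps$, i.e. $\mathcal{I}_k\to0$.

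\textbf{Main obstacle.} The delicate point, as indicated above, is the uniformity in $k$ of the small-radius tail when $\sigma_k$ can approach $1$ (the case $\sigma=1$, i.e. $s$ an integer): then $\rho^{2(1-\sigma_k)}$ is not uniformly small unless one keeps $\sigma_k$ in a fixed compact subset of $[0,1)$, which one cannot always do. I would handle this either by noting that when $\sigma_k\to1$ the relevant normalizing constant $c_{N,\sigma_k}\to 0$ — wait, no, $c_{N,\sigma_k}$ stays bounded but the seminorm $\|\cdot\|_{\sigma_k}$ does converge to the $H^1$-seminorm, so in that regime one should instead directly use $\|\psi v_k\|_{\sigma_k}^2 \le C\|\psi v_k\|_{H^1}^2$ with a constant uniform for $\sigma_k\in[1/2,1]$ (a standard fact) — but that again only gives boundedness. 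The cleanest fix: use the uniform $H^1(U)$-bound on $v_k$ together with compactness (Rellich) to get $v_k\to 0$ strongly in $H^\tau(U)$ for every $\tau<1$, and then for any $\tau\in(\sigma,1)$, $\|\psi v_k\|_{\sigma_k}\le \|\psi v_k\|_{H^{\sigma_k}(U')}\le C\|\psi v_k\|_{H^\tau(U')}\to0$ once $k$ is large enough that $\sigma_k<\tau$; the constant $C$ is uniform because the embedding $H^\tau\hookrightarrow H^{\sigma_k}$ on a bounded set has norm $\le1$ for $\sigma_k\le\tau$. This sidesteps the singular-integral bookkeeping entirely and is the argument I would actually write, reserving the explicit Gagliardo estimates above as the conceptual backbone.
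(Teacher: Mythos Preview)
Your proposal eventually lands on a workable idea, but it is more circuitous than necessary and leaves an actual gap in the case $\sigma_k\to 1$ (i.e.\ $\sigma=1$) that you explicitly flag but do not close. Your ``cleanest fix'' via Rellich requires choosing $\tau\in(\sigma,1)$, which is vacuous when $\sigma=1$; and your preceding splitting argument bounds the small-radius piece by $C\rho^{2(1-\sigma_k)}\|\nabla v_k\|_{L^2(U)}^2$, which does not vanish uniformly as $\sigma_k\to 1$ if $\|\nabla v_k\|_{L^2(U)}$ is only bounded.

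The missing observation is that $\|\nabla v_k\|_{L^2(U)}\to 0$, not merely bounded. You already invoke Lemma~\ref{aship:lem} to obtain $w_k\to 0$ in $H^m_{loc}(\R^N)$; since $|\alpha|+1\le m$, this immediately gives $\nabla\partial^\alpha w_k\to 0$ in $L^2_{loc}$. Once you have this, your own splitting argument closes at once for all $\sigma_k\in(0,1)$ with no case distinction, and your Rellich detour becomes unnecessary.

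The paper's proof exploits precisely this: from $\|w_k\|_{H^m(K)}\to 0$ it deduces both $|\psi\partial^\alpha w_k|_2\to 0$ and $\|\psi\partial^\alpha w_k\|_{H^1(K)}\to 0$, and then applies the Gagliardo--Nirenberg interpolation inequality in the Brezis--Mironescu form (constant $1$, uniform in the fractional exponent),
\[
\|\psi\partial^\alpha w_k\|_{\sigma_k}\le |\psi\partial^\alpha w_k|_2^{\,1-\sigma_k}\,\|\psi\partial^\alpha w_k\|_{H^1}^{\,\sigma_k}=o(1).
\]
This is a two-line argument that bypasses the Gagliardo kernel splitting entirely and is automatically uniform over $\sigma_k\in(0,1)$.
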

\begin{proof}
Let $\psi\in C^\infty_c(\R^N)$, $K=\operatorname{supp}(\psi)$, and let $C>0$ denote possibly different constants depending at most on $N$, $m$, $\sigma,$ and $\psi$. By \eqref{aship2} and Lemma~\ref{aship:lem},
\begin{align}\label{c}
\|w_k\|_{H^m(K)}\to 0\quad \text{ as }k\to\infty,
\end{align}
where $\|\cdot\|_{H^m(K)}$ denotes the usual norm in the Sobolev space $H^m(K)$.

 The claim now follows from the interpolation inequality (see for example~\cite[Theorem 1]{bm18} using $s_2=1,$ $s_1=0$, $p_1=p_2=p=2$, $s=\sigma_k$, $\theta=1-\sigma_k$) because
 \begin{align*}
  \|\psi\partial^\alpha w_k \|_{\sigma_k}\leq 
  |\psi\partial^\alpha w_k |^{1-\sigma_k}_{2}
  \|\psi\partial^\alpha w_k \|^{\sigma_k}_{H^1(K)}=o(1)\quad \text{as $k\to\infty$},
 \end{align*}
 since, by \eqref{c}, $|\psi\partial^\alpha w_k |^2_{2}\leq C \int_K |\partial^\alpha w_k |^2=o(1)$ as $k\to\infty$ and 
\begin{align*}
 \|\psi\partial^\alpha w_k \|^2_{H^1(K)}&= \int_K |\nabla(\partial^\alpha w_k \psi)|^2
 \leq C\int_K |\nabla\partial^\alpha w_k|^2+|\partial^\alpha w_k|^2\leq C \|w_k\|_{H^m(K)}+o(1)=o(1)\ \text{as $k\to\infty$.}\qedhere
\end{align*}
\end{proof}

\begin{lem}\label{lem:bk}
For $k\in\N$, let $(s_k)_{k\in\N}\subset(0,\infty)$ bounded and $w_k\in D^{s_k}(\R^N)$ be such that \eqref{bd} and \eqref{aship2} hold.  Then, for any $\eps\in(0,1)$, up to a subsequence,
 \begin{align*}
\|w_k\varphi\|^2_{s_k}\leq (1+\eps)\cE_{s_k}(w_k,\varphi^2w_k)+o(1)\quad \text{ as }k\to\infty\text{ for all }\varphi\in C^\infty_c(\R^N).
 \end{align*}
\end{lem}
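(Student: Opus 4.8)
The plan is to reduce the higher-order statement $s_k=m+\sigma_k$ to the purely fractional estimate of Lemma~\ref{lem:sigma}, bootstrapping from the commutator identity \eqref{bilin:def}. First I would write, using the case distinction in \eqref{bilin:def} (say $m$ even, the odd case being analogous with $\partial_k(-\Delta)^{\frac{m-1}{2}}$ in place of $(-\Delta)^{\frac{m}{2}}$), that $\|w_k\varphi\|^2_{s_k}=\cE_{\sigma_k}\big((-\Delta)^{\frac m2}(\varphi w_k),(-\Delta)^{\frac m2}(\varphi w_k)\big)$ while $\cE_{s_k}(w_k,\varphi^2 w_k)=\cE_{\sigma_k}\big((-\Delta)^{\frac m2}w_k,(-\Delta)^{\frac m2}(\varphi^2 w_k)\big)$. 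The Leibniz rule for $(-\Delta)^{\frac m2}$ of a product gives $(-\Delta)^{\frac m2}(\varphi w_k)=\varphi(-\Delta)^{\frac m2}w_k+R_k$, where $R_k$ is a sum of terms of the form $\psi_\alpha\,\partial^\alpha w_k$ with $\psi_\alpha\in C^\infty_c(\R^N)$ (derivatives of $\varphi$ times lower-order derivatives of $w_k$, with $|\alpha|<m$), plus possibly a nonlocal remainder supported away from $\operatorname{supp}\varphi$ that is harmless by the argument in \eqref{part1}.

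The key point is then that, by Lemma~\ref{lem:alpha}, every such remainder satisfies $\|\psi_\alpha\partial^\alpha w_k\|_{\sigma_k}=o(1)$, and similarly $\|R_k\|_{\sigma_k}=o(1)$. Writing $v_k:=(-\Delta)^{\frac m2}w_k\in D^{\sigma_k}(\R^N)$, which satisfies a uniform bound $\|v_k\|_{\sigma_k}=\|w_k\|_{s_k}<C$ by \eqref{bilin:def}, and which converges to $0$ in $L^2_{\rm loc}(\R^N)$ by \eqref{aship2} together with Lemma~\ref{aship:lem} (the convergence $w_k\to 0$ in $H^m_{\rm loc}$ in \eqref{app} gives exactly $(-\Delta)^{\frac m2}w_k\to 0$ in $L^2_{\rm loc}$), I can apply Lemma~\ref{lem:sigma} to $v_k$: up to a subsequence, $\|\varphi v_k\|^2_{\sigma_k}\le \cE_{\sigma_k}(v_k,\varphi^2 v_k)+o(1)$. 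Now expand
\begin{align*}
\|w_k\varphi\|^2_{s_k}=\|\varphi v_k+R_k\|^2_{\sigma_k}=\|\varphi v_k\|^2_{\sigma_k}+2\cE_{\sigma_k}(\varphi v_k,R_k)+\|R_k\|^2_{\sigma_k},
\end{align*}
and bound the cross term by Cauchy--Schwarz: $|\cE_{\sigma_k}(\varphi v_k,R_k)|\le \|\varphi v_k\|_{\sigma_k}\|R_k\|_{\sigma_k}\le C\cdot o(1)=o(1)$, since $\|\varphi v_k\|_{\sigma_k}$ is uniformly bounded by Lemma~\ref{A:l} and Lemma~\ref{lem:prod_co}. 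Hence $\|w_k\varphi\|^2_{s_k}\le\|\varphi v_k\|^2_{\sigma_k}+o(1)\le\cE_{\sigma_k}(v_k,\varphi^2 v_k)+o(1)$. A symmetric expansion of $\cE_{\sigma_k}(w_k,\varphi^2 w_k)=\cE_{\sigma_k}(v_k,(-\Delta)^{\frac m2}(\varphi^2 w_k))=\cE_{\sigma_k}(v_k,\varphi^2 v_k)+\cE_{\sigma_k}(v_k,\widetilde R_k)$ with $\|\widetilde R_k\|_{\sigma_k}=o(1)$ (again Lemma~\ref{lem:alpha}) shows $\cE_{\sigma_k}(v_k,\varphi^2 v_k)=\cE_{s_k}(w_k,\varphi^2 w_k)+o(1)$. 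Combining, $\|w_k\varphi\|^2_{s_k}\le\cE_{s_k}(w_k,\varphi^2 w_k)+o(1)$, which is even slightly stronger than the claim (any $\eps\in(0,1)$ works trivially since the factor is $1$; I keep the $(1+\eps)$ because in the regime $s<1$, i.e. $m=0$, one needs it — there $v_k=w_k$, $R_k=0$, and Lemma~\ref{lem:sigma} already gives the result directly with factor $1$, but uniformity of constants across $s_k\to s$ is cleaner stated with the slack).

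The case $s<1$ (equivalently $m=0$) must be treated separately at the start: there $s_k=\sigma_k\in(0,1)$ eventually, the hypothesis \eqref{aship2} plus \eqref{bd} gives via Lemma~\ref{aship:lem} that $w_k\to 0$ in $L^2_{\rm loc}$, so Lemma~\ref{lem:sigma} applies directly to $w_k$ and yields the claim (with factor $1\le 1+\eps$). The main obstacle I anticipate is bookkeeping the Leibniz remainder $R_k$ precisely — verifying that $(-\Delta)^{\frac m2}(\varphi w_k)-\varphi(-\Delta)^{\frac m2}w_k$, and its odd-$m$ analogue $\partial_i(-\Delta)^{\frac{m-1}2}(\varphi w_k)-\varphi\,\partial_i(-\Delta)^{\frac{m-1}2}w_k$, really decompose as a finite sum of compactly supported smooth functions times $\partial^\alpha w_k$ with $|\alpha|<m$ (for the local differential part) plus a nonlocal tail controlled exactly as in \eqref{part1} of Lemma~\ref{lem:sigma} (for the part of $(-\Delta)^{\sigma'}$-type remainders acting at a positive distance). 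This is a routine but somewhat tedious commutator computation; once it is in place, the three applications of Lemma~\ref{lem:alpha} and the single application of Lemma~\ref{lem:sigma} close the argument.
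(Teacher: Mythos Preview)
Your approach is essentially the paper's: the same Leibniz decomposition $(-\Delta)^{m/2}(\varphi w_k)=\varphi v_k+R_k$ with $v_k=(-\Delta)^{m/2}w_k$, the same use of Lemma~\ref{lem:alpha} to kill the remainder, Lemma~\ref{lem:sigma} applied to $v_k$, and the symmetric expansion of $\cE_{s_k}(w_k,\varphi^2 w_k)$. The one genuine difference is how the remainder is absorbed. The paper applies the pointwise Young inequality $|a+b|^2\le(1+\eps)|a|^2+C(\eps)|b|^2$ inside the Gagliardo integrand, which is the origin of the $(1+\eps)$ factor. You instead expand $\|\varphi v_k+R_k\|_{\sigma_k}^2$ exactly and bound the cross term $2\cE_{\sigma_k}(\varphi v_k,R_k)$ via Cauchy--Schwarz together with the uniform bound $\|\varphi v_k\|_{\sigma_k}\le|\hat\varphi|_1\|v_k\|_{\sigma_k}$ from Lemma~\ref{lem:prod_co}; this is slightly cleaner and yields the sharper constant $1$ in place of $1+\eps$. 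Two minor cleanups: (i) for integer $m$, $(-\Delta)^{m/2}$ is a purely local differential operator, so there is no nonlocal tail in $R_k$ and the reference to \eqref{part1} can be dropped; (ii) your parenthetical about needing the $(1+\eps)$ slack when $m=0$ is unnecessary, since Lemma~\ref{lem:sigma} already gives factor $1$ in that case as well.
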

\begin{proof} 
Since $(s_k)_{k\in\N}$ is bounded, passing to a subsequence, there is $m\in\N_0$ and $(\sigma_k)_{k\in\N}\subset [0,1]$ such that $\lim_{k\to\infty}\sigma_k=:\sigma\in[0,1]$ and $s_k=m+\sigma_k\to s=m+\sigma\geq 0$ as $k\to\infty$.

Assume first that $s_k\in(m,m+1)$ and $m$ is even. Observe that $\Delta^\frac{m}{2} (w_k\varphi) = \varphi \Delta^\frac{m}{2} w_k + R_k,$ where $R_k$ is a sum of products with derivatives of $w_k$ of order smaller than $m$. Then, by Cauchy's inequality, for $\eps>0$ arbitrarily small there is $C(\eps)>0$ such that
\begin{align*}
 |\Delta^\frac{m}{2} (w_k\varphi)(x)-\Delta^\frac{m}{2} (w_k\varphi)(y) |^2
 &=|\varphi(x) \Delta^\frac{m}{2} w_k(x) 
 -\varphi(y) \Delta^\frac{m}{2} w_k(y) + R(x) -R(y)|^2\\
 &\leq 
  (1+\eps)|\varphi(x) \Delta^\frac{m}{2}w_k(x) 
 -\varphi(y) \Delta^\frac{m}{2} w_k(y)|^{2} 
 + C(\eps)|R(x) -R(y)|^{2}
 \end{align*}
and therefore, by Lemma~\ref{lem:alpha},
\begin{align}
\|w_k\varphi\|_{s_k}^2= \|\Delta^\frac{m}{2} (w_k\varphi)\|_{\sigma_k}^2
 &\leq (1+\eps)\|\varphi\Delta^\frac{m}{2}w_k\|_{\sigma_k}^2
 +C(\eps)\|R\|_{\sigma_k}^2 = (1+\eps)\|\varphi\Delta^\frac{m}{2}w_k\|_{\sigma_k}^2
 +o(1)\label{s1}
 \end{align}
as $k\to \infty.$ Moreover, by Lemma~\ref{lem:sigma},
\begin{align}
\|\varphi\Delta^\frac{m}{2}w_k\|_{\sigma_k}\leq \cE_{\sigma_k}(\Delta^\frac{m}{2}w_k,\varphi^2\Delta^\frac{m}{2}w_k)+o(1)\quad \text{ as }k\to\infty.\label{s2}
\end{align}
Observe that $(-\Delta)^\frac{m}{2}(\varphi^2 w_k)=\varphi^2(-\Delta)^\frac{m}{2}w_k + \widetilde R,$ where $\widetilde R$ has derivatives of $w_k$ with order lower than $m$. Then
\begin{align*}
 \cE_{\sigma_k}(\Delta^\frac{m}{2}w_k,\varphi^2\Delta^\frac{m}{2}w_k)=
 \cE_{\sigma_k}(\Delta^\frac{m}{2}w_k,\Delta^\frac{m}{2}(\varphi^2w_k))
 -\cE_{\sigma_k}(\Delta^\frac{m}{2}w_k,\widetilde R).
\end{align*}
By \eqref{bd}, Cauchy-Schwarz inequality, and Lemma~\ref{lem:alpha}, we have that $|\cE_{\sigma_k}(\Delta^\frac{m}{2}w_k,\widetilde R)|\leq \|w_k\|_{s_k}\|\widetilde R\|_{\sigma_k}=o(1)$ as $k\to\infty,$ and therefore
\begin{align}\label{s3}
 \cE_{\sigma_k}(\Delta^\frac{m}{2}w_k,\varphi^2\Delta^\frac{m}{2}w_k)=
 \cE_{\sigma_k}(\Delta^\frac{m}{2}w_k,\Delta^\frac{m}{2}(\varphi^2w_k))
 +o(1)\quad \text{ as }k\to\infty.
\end{align}
But then, by \eqref{s1}, \eqref{s2}, \eqref{s3},
\begin{align*}
\|w_k\varphi\|_{s_k}^2\leq (1+\eps)\cE_{\sigma_k}(\Delta^\frac{m}{2}w_k,\Delta^\frac{m}{2}(\varphi^2w_k))
 +o(1)\quad \text{ as }k\to\infty,
\end{align*}
as claimed.  

The case $s_k\in(m,m+1)$ with $m$ odd is analogous using the corresponding norms and scalar products, see \eqref{bilin:def}.  On the other hand, if $s_k=m$ for all $k\in\N$ with $m$ even, then, by Lemma~\ref{aship:lem},
\begin{align*}
\|w_k\varphi\|^2_{s_k}
&=\int_{\R^N}|(-\Delta)^{\frac{m}{2}}(w_k\varphi)|^2
=\int_{\R^N}\varphi^2|(-\Delta)^{\frac{m}{2}}w_k|^2+o(1)\\
&=\int_{\R^N}(-\Delta)^{\frac{m}{2}}w_k (-\Delta)^{\frac{m}{2}}(\varphi^2w_k)+o(1)\quad \text{ as }k\to\infty.
 \end{align*}
 The case $s_k=m$ for all $k\in\N$ with $m$ odd is analogous.  This ends the proof.
 \end{proof}

\begin{lem}\label{arg:lem}
Let $m\in\N_0$, $(\sigma_k)_{k\in\N}\subset [0,1]$, $\lim_{k\to\infty}\sigma_k=:\sigma\in[0,1]$, $s_k:=m+\sigma_k$, $s:=m+\sigma>0$, $N>2\max\{s,s_k\}$ for all $k\in\N$. Let $\Omega\subset\R^N$ be a smooth bounded $G-$invariant domain and $u_k\in D_0^{s_k}(\Omega)^\phi$ be such that
\begin{align}
 &C^{-1}<|u_{k}|_{\dsa{s_k}}<C\qquad
 \text{ for all }k\in\N \text{ and for some }C>1,\label{lb:l}\\
 &\|J'_{s_k}(u_{k})\|_{(D^{s_k}(\R^N))'}=o(1)\quad \text{ as }k\to\infty
 \label{addhip},\\
 &u_k\to 0\qquad \text{ in $D_0^{s-\delta}(\Omega)$ as $k\to\infty$ for some $\delta\in(0,\sigma)$.}\label{eq:strong_conv_u_s_k:l}
 \end{align}
Then there are sequences $(\lambda_k)_{k\in\N}\subset(0,\infty)$, $(\xi_k)_{k\in\N}\subset(\R^N)^G$, and a constant $C_1>0$ such that $\lambda_k\to 0$, $\xi_k\to \xi\in (\R^N)^G$,
\begin{equation}\label{eq:dis_xik_Omega:2:l}
\textnormal{dist}(\xi_k,\Omega)\leq C_1\lambda_k,
\end{equation}
and the rescaling
\begin{align}\label{wkdef:l}
 w_k(y):=\lambda_k^{\frac{N}{2}-s} u_k(\lambda_k y+\xi_k),\qquad y\in \R^N,
\end{align}
satisfies that, up to a subsequence,
\begin{align*}
\eta w_k\to \eta w\qquad \text{ in $D^{s-\delta}(\R^N)$ as $k\to\infty$ for all $\eta\in C^\infty_c(\R^N)$, $\delta\in(0,s),$}
 \end{align*}
 and for some $w\in D^s(\R^N)^\phi\backslash\{0\}$.
 \end{lem}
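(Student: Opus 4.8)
The plan is a blow-up argument of concentration--compactness type, with the Brezis--Kato inequality of Lemma~\ref{lem:bk} as the tool that prevents the rescaled sequence from vanishing. First, pairing \eqref{addhip} with $u_k$ gives $\|u_k\|_{s_k}^2=|u_k|_{\dsa{s_k}}^{\dsa{s_k}}+o(1)$, so \eqref{lb:l} yields $c\le\|u_k\|_{s_k}\le c^{-1}$ for some $c>0$ and all $k$. Since $s_k\to s$, for large $k$ the embedding $D_0^{s_k}(\Omega)\hookrightarrow D_0^{s-\delta}(\Omega)$ has a uniform constant (Lemma~\ref{A:l}); combining \eqref{eq:strong_conv_u_s_k:l} with Lemma~\ref{aship:lem} applied to $(u_k)$ — the $D^{s-\delta}$-limit being $0$ forces each weak $D^{s-\delta'}$-limit of $\eta u_k$ to vanish — I obtain $u_k\to0$ in $L^p(\Omega)$ for $p<\ds$ and a.e.\ in $\R^N$. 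Because $\dsa{s_k}\to\ds$, this is compatible with the lower bound in \eqref{lb:l} only if the $L^{\dsa{s_k}}$-mass of $u_k$ concentrates at scales tending to $0$, and the rest of the argument makes this precise.

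To locate the concentration I would use a Lévy-type concentration function: fixing $\delta_0>0$ small (to be constrained below through $\kappa_{N,s}$), for $k$ large there are $\lambda_k>0$ and $\xi_k\in\R^N$ with $\int_{B_{\lambda_k}(\xi_k)}|u_k|^{\dsa{s_k}}=\delta_0=\sup_{\xi}\int_{B_{\lambda_k}(\xi)}|u_k|^{\dsa{s_k}}$. Since $u_k$ is supported in the bounded set $\Omega$, $\textnormal{dist}(\xi_k,\Omega)<\lambda_k$ and $\lambda_k$ is bounded; moreover $\lambda_k\to0$, for otherwise the rescalings would be bounded translate-dilates of $u_k$, hence would vanish in $L^p_{loc}$ by the previous step, contradicting the persistence of the mass $\delta_0$ through the Brezis--Kato step below. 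As $\phi$-equivariance forces $|u_k(gx)|=|u_k(x)|$, the same mass is present near every point of the orbit $G\xi_k$; applying Lemma~\ref{lem:prop_G_seq} to $(\lambda_k,\xi_k)$ I may replace $\xi_k$, up to an $O(\lambda_k)$ shift along $G\xi_k$, either by points converging to a $G$-fixed point or by points that separate at scale $\lambda_k$. The latter is impossible: it would produce arbitrarily many pairwise disjoint balls of radius $\lambda_k$, each of mass $\ge\delta_0$, against $\int|u_k|^{\dsa{s_k}}\le c^{-1}$. Hence, after this replacement, $\xi_k\in(\R^N)^G$, $\xi_k\to\xi\in(\R^N)^G$, and $\textnormal{dist}(\xi_k,\Omega)\le C_1\lambda_k$, which is \eqref{eq:dis_xik_Omega:2:l}.

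Next I would rescale as in \eqref{wkdef:l} and pass to the limit. The mismatch between the exponent $\tfrac{N}{2}-s$ and the order $s_k$ produces a scaling factor $\lambda_k^{s_k-s}$; tracking it — using $\lambda_k\to0$, $s_k\to s$, the two-sided bound on $\|u_k\|_{s_k}$, the relation between the concentration scale and the decay rate $\|u_k\|_{s-\delta}$, and the normalization of the concentrated mass — one checks, after passing to a further subsequence, that $\lambda_k^{s_k-s}$ tends to a finite positive limit. Consequently $\|w_k\|_{s_k}\le C'$, $\sup_{\xi}\int_{B_1(\xi)}|w_k|^{\dsa{s_k}}=\delta_0+o(1)$ with the supremum nearly realized at $\xi=0$, and, rescaling \eqref{addhip}, $\cE_{s_k}(w_k,\psi)=\int|w_k|^{\dsa{s_k}-2}w_k\,\psi+o(1)$ for every $\psi\in C_c^\infty(\R^N)$ (with a harmless fixed positive constant in front of the nonlinearity, again from that factor). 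Since $\xi_k$ is $G$-fixed, $w_k$ is $\phi$-equivariant, and Lemma~\ref{aship:lem} provides, along a subsequence, some $w\in D^s(\R^N)$ with $\eta w_k\to\eta w$ in $D^{s-\delta'}(\R^N)$ for all $\eta\in C_c^\infty(\R^N)$ and $\delta'\in(0,s)$, and $w_k\to w$ a.e.\ and in $L^p_{loc}(\R^N)$ for $p<\ds$; the a.e.\ limit is $\phi$-equivariant, so $w\in D^s(\R^N)^\phi$.

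It remains to exclude $w=0$, which is the heart of the matter. If $w=0$, then $w_k\to0$ in $L^2_{loc}(\R^N)$, so Lemma~\ref{lem:bk} applies. Fixing $\varphi\in C_c^\infty(B_2(0))$ with $\varphi\equiv1$ on $B_1(0)$, and using the rescaled equation together with a product estimate, for every $\eps>0$,
\[
\|\varphi w_k\|_{s_k}^2\le(1+\eps)\,\cE_{s_k}(w_k,\varphi^2 w_k)+o(1)\le C(1+\eps)\int|w_k|^{\dsa{s_k}}\varphi^2+o(1),
\]
and H\"older together with Theorem~\ref{thm:sobolev} bounds the integral by $\big(\int_{B_2(0)}|w_k|^{\dsa{s_k}}\big)^{2s_k/N}\kappa_{N,s_k}^2\|\varphi w_k\|_{s_k}^2$. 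Covering $B_2(0)$ by a bounded number of unit balls gives $\int_{B_2(0)}|w_k|^{\dsa{s_k}}\le C_N\delta_0+o(1)$, so choosing $\delta_0$ small (only in terms of $N$, $s$, $C$) makes the resulting coefficient $<1$ for $k$ large and $\eps$ small, the last term is absorbed, and $\|\varphi w_k\|_{s_k}\to0$. But then, by Theorem~\ref{thm:sobolev} once more, $\delta_0+o(1)=\int_{B_1(0)}|w_k|^{\dsa{s_k}}\le\kappa_{N,s_k}^{\dsa{s_k}}\|\varphi w_k\|_{s_k}^{\dsa{s_k}}\to0$, a contradiction. Hence $w\in D^s(\R^N)^\phi\setminus\{0\}$, which is the claim. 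The main obstacle is precisely this non-vanishing step — running the Brezis--Kato iteration uniformly in $k$ while $s_k$ varies and with only local $L^2$-convergence available — together with the delicate bookkeeping of the factor $\lambda_k^{s_k-s}$, i.e.\ keeping $w_k$ simultaneously bounded and non-degenerate in $D^{s_k}(\R^N)$, which is what dictates the choice of $\lambda_k$ through the concentration function and the passage to a further subsequence.
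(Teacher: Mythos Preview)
Your blow-up argument mirrors the paper's almost step by step: concentration function to define $\lambda_k$, Lemma~\ref{lem:prop_G_seq} to force $\xi_k\in(\R^N)^G$ via a mass-counting contradiction, rescaling, Lemma~\ref{aship:lem} for the local convergence, and Lemma~\ref{lem:bk} plus H\"older--Sobolev to rule out $w=0$. The cosmetic differences are harmless: the paper takes $\varphi\in C_c^\infty(B_1(z))$ directly (so the concentrated mass $\tau$ enters without a covering of $B_2(0)$), it routes the test $J'_{s_k}(w_k)(\varphi^2w_k)=o(1)$ through the symmetrization $(\varphi^2w_k)_\phi=\vartheta w_k$ and Lemma~\ref{lem:deriv_func_equiv} (unnecessary here since \eqref{addhip} is the full dual norm, so your direct testing is fine), and it postpones $\lambda_k\to0$ to the very last line, deducing it from $w\neq0$ together with \eqref{eq:strong_conv_u_s_k:l}, which avoids the mild circularity in your early argument.

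The one genuine gap is your treatment of the factor $\lambda_k^{s_k-s}$. You are right that the exponent $\tfrac{N}{2}-s$ in \eqref{wkdef:l} does not match the order $s_k$, so this factor contaminates $\|w_k\|_{s_k}$, the mass identity, and the rescaled equation. Your assertion that it ``tends to a finite positive limit'' is not justified: nothing in the hypotheses ties the rate $\lambda_k\to0$ to the rate $s_k\to s$, so along different subsequences $\lambda_k^{s_k-s}$ can go to $0$, to $\infty$, or oscillate, and the appeal to ``the relation between the concentration scale and the decay rate $\|u_k\|_{s-\delta}$'' does not pin it down. In fact the paper's own proof simply writes the rescaled mass identity \eqref{eq:delta_wk:2:l} as an exact equality and asserts $\|w_k\|_{s_k}$ bounded, both of which are only literally correct for the scaling exponent $\tfrac{N}{2}-s_k$; the clean fix is to run the entire argument with $\widetilde w_k:=\lambda_k^{\frac{N}{2}-s_k}u_k(\lambda_k\cdot+\xi_k)$, for which everything you wrote (boundedness, the mass normalization, the Brezis--Kato step, $\widetilde w\neq0$) goes through verbatim, and to regard the exponent in \eqref{wkdef:l} as a typo for $s_k$ rather than trying to prove a convergence of $\lambda_k^{s_k-s}$ that is not available.
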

\begin{proof}
Let $C>1$ as in \eqref{lb:l}, $\kappa_{N,s}$ as in \eqref{eq:best_constant}, and let $\tau>0$ be such that
\begin{align}\label{tau:l}
\tau<\min\{(3\kappa_{N,s_k}^2)^{-\frac{N}{2s_k}},C^{-1}\}\qquad \text{ for all }k\in\N.
\end{align}
By \eqref{lb:l}, there are $(\lambda_k)_{k\in\N}\subset(0,\infty)$ and $(x_k)_{k\in\N}\subset\R^N$ such that, passing to a subsequence,
\begin{equation}\label{eq:delta_iden_0:2:l}
\sup_{x\in \R^N}\int_{B_{\lambda_k}(x)}|u_{k}|^{\dsa{s_k}}=\int_{B_{\lambda_k}(x_k)}|u_{k}|^{\dsa{s_k}}=\tau.
\end{equation}

For the chosen sequences $(\lambda_k)_{k\in\N}$ and $(x_k)_{k\in\N}$, let $C_0>0$ and $(\xi_k)_{k\in\N}$ be given by Lemma~\ref{lem:prop_G_seq}. Then, $|g_kx_k-\xi_k|\leq C_0\lambda_k$ for some $g_k\in G$ and, since $|u_{k}|$ is $G$-invariant, we have  that
\begin{equation}\label{eq:delta_iden:2:l}
\tau=\int_{B_{\lambda_k}(g_kx_k)}|u_{k}|^{\dsa{s_k}}\leq \int_{B_{C_1\lambda_k}(\xi_k)}|u_{k}|^{\dsa{s_k}},
\end{equation}
where $C_1:=C_0+1$ and, in particular, \eqref{eq:dis_xik_Omega:2:l} holds. 

We claim that $(\xi_k)_{k\in\N}\subset(\mathbb R^N)^G$. Otherwise, by \Cref{lem:prop_G_seq}, we have, for each $m\in\mathbb{N}$, $m$ different elements $g_1,\ldots,g_m\in G$ such that $B_{C_1\lambda_k}(g_i\xi_k)\cap B_{C_1\lambda_k}(g_j\xi_k)=\emptyset$ for $k$ large enough. Therefore, from~\eqref{eq:delta_iden:2:l},
\begin{equation*}
m\tau \leq \sum_{i=1}^{m}\int_{{B_{C_1\lambda_k}(g_i\xi_k)}}|u_{k}|^{\dsa{s_k}}\leq \int_{\Omega}|u_k|^{\dsa{s_k}}<C\quad \text{for every $m\in\mathbb{N}$, }
\end{equation*}
which yields a contradiction to \eqref{lb:l}. Thus, $(\xi_k)_{k\in\N}\subset(\mathbb R^N)^G$. Let $w_k$ be given by \eqref{wkdef:l}.  Since $u_{k}$ is $\phi$-equivariant and $\xi_k$ is a $G$-fixed point, we have that $w_k$ is $\phi$-equivariant. Observe that, by \eqref{eq:delta_iden_0:2:l}, \eqref{eq:delta_iden:2:l}, and a change of variables,
\begin{equation}\label{eq:delta_wk:2:l}
\tau=\sup_{z\in\R^N}\int_{B_1(z)}|w_k|^{\dsa{s_k}}\leq \int_{B_{C_1}(0)}|w_k|^{\dsa{s_k}}.
\end{equation}
Similarly, by \eqref{lb:l} and \eqref{addhip}, $(w_k)_{k\in\N}$ is uniformly bounded in $D^{s_k}(\R^N)$. By Lemma~\ref{aship:lem}, there is $w\in D^{s}(\R^N)^\phi$ such that, passing to a subsequence,  
\begin{align*}
\eta w_k \to \eta w & \textnormal{ in }D^{s-\eps}(\R^N)^\phi\text{ as }k\to\infty \text{ for all }\eta\in C^\infty_c(\R^N)\text{ and }\eps\in(0,s).
\end{align*}

Now, we prove by contradiction that $w\neq 0$. Assume that $w=0$. Given $\varphi\in C_c^\infty(\R^N)$, we set
\begin{equation*}
\vartheta(x):=\frac{1}{\mu(G)}\int_{G}\varphi^2(gx)\d{\mu} \quad\text{and}\quad \vartheta_k(x)=\vartheta\left(\frac{x-\xi_k}{\lambda_k}\right).
\end{equation*}
Using that $w_k$ is $\phi$-equivariant and according to definition~\eqref{eq:def_phi}, a direct computation yields $(\varphi^2 w_k)_\phi=\vartheta w_k$, whence $\vartheta_ku_k$ is $\phi$-equivariant and $\|\vartheta_k u_k\|_{s_k}$ is uniformly bounded. From here, using \Cref{lem:deriv_func_equiv} and \eqref{addhip},
\begin{equation}\label{eq:asymp_func_claim:l}
J_{s_k}^\prime(w_k)(\varphi^2 w_k)=J_{s_k}^\prime(w_k)\vartheta w_k=J_{s_k}^\prime(u_k)(\vartheta_k u_k)=o(1)\quad \text{as $k\to\infty$.}
\end{equation}
By Lemma~\ref{lem:bk}, $\|w_k\varphi\|^2_{s_k}\leq \frac{3}{2}\cE_{s_k}(w_k,\varphi^2w_k)+o(1)$ as $k\to\infty.$  
Let $\varphi\in C^\infty_c(B_1(z))$ with $z\in\R^N$. Then, by H\"older's inequality and \eqref{eq:asymp_func_claim:l},
\begin{align*}
\norme{w_k\varphi}_{s_k}^2 \leq \frac{3}{2}\int_{B_1(z)}|w_k|^{\dsa{s_k}-2}|w_k\varphi|^2+o(1)
&\leq \frac{3}{2}\left(\int_{B_1(z)}|w_k|^{\dsa{s_k}}\right)^{\frac{\dsa{s_k}-2}{\dsa{s_k}}}\left(\int_{\R^N}|\varphi w_k|^{\dsa{s_k}}\right)^{\frac{2}{\dsa{s_k}}}+o(1)\\
&\leq \frac{3}{2}\tau^{\frac{2s_k}{N}} |\varphi w_k|^2_{\dsa{s_k}}+o(1).
\end{align*}
By \Cref{thm:sobolev} and \eqref{tau:l},
\begin{align} \label{eq:bound_norm:l}
\norme{\varphi w_k}_{s_k}^2 \leq \frac{3}{2}\tau^{\frac{2s_k}{N}}\kappa_{N,s}^2 \norme{\varphi w_k}^2_{s_k}+o(1)
\leq \frac{1}{2}\norme{\varphi w_k}^2_{s_k}+o(1)\quad \text{as $k\to\infty$. }
\end{align}
By \eqref{eq:bound_norm:l}, we have that $\|\varphi w_k\|_{s_k}=o(1)$ and therefore (by Theorem~\ref{thm:sobolev}) $|\varphi w_k|_{\dsa{s_k}}=o(1)$ as $k\to\infty$ for any $\varphi\in C_c^\infty(B_1(z))$, which contradicts~\eqref{eq:delta_wk:2:l}. Therefore,
\begin{align}\label{w_claim}
 w\neq 0\quad \text{ in }\R^N.
\end{align}
Then, passing to a subsequence, $\xi_k\to \xi\in (\R^N)^G$ as $k\to\infty$ and, by \eqref{eq:strong_conv_u_s_k:l}, \eqref{wkdef:l}, and \eqref{w_claim}, we conclude that $\lambda_k\to 0$ as $k\to\infty$.
\end{proof}

\section{A concentration result}\label{sec:c}

In this section, we show a concentration result following the strategy from \cite[Theorem 2.5]{CLR18} and \cite[Theorem~3.5]{CS20} (see also~\cite[Theorem 8.13]{Will96}). Recall that $A^G$ denotes the set of $G$-fixed points of $A\subset\R^N$.

\begin{theo}\label{thm:concentration}
Assume that $G$ and $\phi$ satisfy $(A_1)$ and $(A_2)$. Let $s>0$, $N\geq 1$, $N>2s$, $\Omega$ be a $G$-invariant bounded smooth domain in $\R^N$, and let $u_k\in D_0^s(\Omega)^\phi$ be such that
\begin{equation}\label{eq:hyp_main_teo}
J_s(u_k)\to c_s^\phi(\Omega)\quad\text{ and }\quad  J_s^\prime(u_k)\to 0 \quad{\textnormal{in }} (D_0^s(\Omega)^\phi)^\prime\quad \text{ as }k\to\infty.
\end{equation}
Then, up to a subsequence, one of the following two possibilities occurs
\begin{enumerate}
\item[\textnormal{(I)}] $(u_k)_{k\in\N}$ converges strongly in $D_0^s(\Omega)$ to a minimizer of $J_s$ on $\mathcal N_s^\phi(\Omega)$, or
\item[\textnormal{(II)}] there exist a sequences $(\xi_k)_{k\in\N}\subset(\R^N)^G$, $(\lambda_k)_{k\in\N}\subset(0,\infty)$, and a nontrivial solution $w$ to
\begin{equation}\label{eq:main_teo}
(-\Delta)^s w=|w|^{\ds-2}w, \quad w\in D_0^s(\esp)^\phi
\end{equation}
with the following properties:
\begin{enumerate}
\item[\textnormal{(i)}] $\lambda_k\to 0$, $\xi_k\to \xi$, $\xi\in (\ov{\Omega})^G$, and $\lambda_k^{-1}\operatorname{dist}(\xi_k,\Omega)\to d\in[0,\infty]$.
\item[\textnormal{(ii)}] If $d=\infty$, then $\esp=\R^N$ and $\xi_k\in\Omega$. 
\item[\textnormal{(iii)}] If $d\in[0,\infty)$, then $\xi\in \partial\Omega$ and $\esp=\{x\in\R^N:x\cdot \nu>\ov{d}\}$, where $\nu$ is the inward-pointing  unit normal to $\partial \Omega$ at $\xi$ and $\ov{d}\in\{d,-d\}$. Moreover, $\esp$ is $G$-invariant, $\esp^G\neq 0$, and $\Omega^G\neq 0$. 
\item[\textnormal{(iv)}] $w\in \cN_s^\phi(\esp)$ and $J_s(w)=c_s^\phi(\R^N)$.
\item[\textnormal{(v)}] $\lim\limits_{k\to \infty}\norme{u_k-\lambda_k^{-\frac{N}{2}+s}w\left(\frac{\cdot-\xi_k}{\lambda_k}\right)}_s=0$.
\end{enumerate}
\end{enumerate}
\end{theo}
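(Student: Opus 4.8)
The plan is to run a symmetric concentration-compactness argument adapted to the $\phi$-equivariant setting, following the template of \cite[Theorem 2.5]{CLR18} and \cite[Theorem~3.5]{CS20}. First I would show that the Palais-Smale sequence $(u_k)$ is bounded in $D_0^s(\Omega)$: combining \eqref{eq:hyp_main_teo} with the identity $J_s(u_k)-\tfrac12 J_s'(u_k)u_k = (\tfrac12-\tfrac1{\ds})|u_k|_{\ds}^{\ds}$ gives a uniform bound on $|u_k|_{\ds}$, and then $\|u_k\|_s^2 = J_s'(u_k)u_k + |u_k|_{\ds}^{\ds}$ is bounded. Up to a subsequence, $u_k\weakly u_\infty$ in $D_0^s(\Omega)^\phi$, and $u_\infty$ is a (possibly trivial) $\phi$-equivariant solution of \eqref{eq:frac_domain}. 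If $u_k\to u_\infty$ strongly, then $u_\infty\ne0$ (since $c_s^\phi(\Omega)>0$ by Lemma~\ref{lem:min_max}a)) and $u_\infty$ minimizes $J_s$ on $\mathcal N_s^\phi(\Omega)$; this is case (I).

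Next I would treat the non-compact case. Set $v_k := u_k - u_\infty$, so $v_k\weakly 0$ in $D_0^s(\Omega)$, $v_k\to0$ in $D_0^{s-\delta}(\Omega)$ (Theorem~\ref{thm:rellich_type}) and, by a Brezis-Lieb-type splitting for both the norm and the $L^{\ds}$-nonlinearity, $J_s(v_k)\to c_s^\phi(\Omega)-J_s(u_\infty)$ and $J_s'(v_k)\to0$ in $(D^s(\R^N))'$. Since $c_s^\phi(\Omega)\le c_s^\phi(\R^N)$ always and $J_s(u_\infty)\ge c_s^\phi(\R^N)$ would force compactness when $u_\infty\ne0$ by the variational characterization, in the non-compact case one has $u_\infty=0$, so $v_k=u_k$ and $J_s(u_k)\to c_s^\phi(\Omega)$ with $|u_k|_{\ds}$ bounded below away from zero (otherwise $\|u_k\|_s\to0$ and $c_s^\phi(\Omega)=0$, a contradiction). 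Now apply Lemma~\ref{arg:lem} (with $s_k\equiv s$, $m+\sigma=s$, $\sigma_k\equiv\sigma$): it furnishes scales $\lambda_k\to0$, $G$-fixed centers $\xi_k\to\xi$ with $\mathrm{dist}(\xi_k,\Omega)\le C_1\lambda_k$, and a nontrivial $\phi$-equivariant $w\in D^s(\R^N)^\phi$ with $\eta w_k\to\eta w$ in $D^{s-\delta}(\R^N)$ for the rescaling $w_k(y)=\lambda_k^{\frac N2-s}u_k(\lambda_k y+\xi_k)$. Passing $J_{s}'(u_k)\to0$ through the rescaling (using the scaling invariance \eqref{issues} and Lemma~\ref{lem:deriv_func_equiv} to test against $\phi$-equivariant functions), $w$ solves $(-\Delta)^s w=|w|^{\ds-2}w$ on the appropriate limit domain $\esp$.

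Then I would identify the limit domain and extract the geometric dichotomy (i)–(iii): this is the standard blow-up analysis of the boundary. Pulling back $\Omega$ under the affine maps $y\mapsto\lambda_k y+\xi_k$, the sets $\lambda_k^{-1}(\Omega-\xi_k)$ converge (locally, in the appropriate sense for the nonlocal operator) either to $\R^N$ (when $d:=\lim\lambda_k^{-1}\mathrm{dist}(\xi_k,\Omega)=\infty$, forcing $\xi_k\in\Omega$ and $\xi\in\ov\Omega^G$), or to a half-space $\esp=\{x\cdot\nu>\ov d\}$ with $\nu$ the inward normal at $\xi\in\partial\Omega$ and $\ov d\in\{d,-d\}$ according to whether $\xi_k$ lies inside or outside $\Omega$; the half-space is $G$-invariant because $\xi$ is a $G$-fixed point and $\nu$ is then $G$-fixed, so $\esp^G\ne\emptyset$ and hence $\Omega^G\ne\emptyset$. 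For (iv) and (v) one uses the energy accounting: $w\in\cN_s^\phi(\esp)$ by weak convergence and the equation, the rescaling preserves energy, and Brezis–Lieb plus $J_s(u_k)\to c_s^\phi(\Omega)$ together with $J_s(w)\ge c_s^\phi(\esp)\ge c_s^\phi(\R^N)$ and Lemma~\ref{lem:c_infinity} pin down $J_s(w)=c_s^\phi(\R^N)=c_s^\phi(\Omega)$, which also leaves no residual energy, giving $\|u_k-\lambda_k^{-\frac N2+s}w(\tfrac{\cdot-\xi_k}{\lambda_k})\|_s\to0$.

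The main obstacle I anticipate is making the nonlocal blow-up rigorous: controlling $J_{s_k}'$ (here $J_s'$) under rescaling and cut-offs so that no energy escapes either to spatial infinity or into a region near $\partial\Omega$ where the limiting domain degenerates, and — in the higher-order regime $s>1$ — handling the lower-order derivative terms produced when commuting $(-\Delta)^{m/2}$ past cut-off functions. This is precisely what Lemmas~\ref{lem:sigma}, \ref{lem:alpha}, and \ref{lem:bk} are designed to absorb, so the argument reduces to invoking them carefully; the remaining delicate point is verifying that the limiting domain is exactly a half-space (and not, e.g., a wedge), which uses the $C^1$-smoothness of $\partial\Omega$ together with $\lambda_k\to0$.
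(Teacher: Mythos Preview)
Your plan matches the paper's proof closely: boundedness of the Palais--Smale sequence, weak limit, then invoking Lemma~\ref{arg:lem} with $s_k\equiv s$ to extract the concentration data and identify $\esp$ via the blow-up dichotomy. Two remarks. First, the inequality you wrote is backwards: one has $c_s^\phi(\R^N)\le c_s^\phi(\Omega)$ always (since $\mathcal N_s^\phi(\Omega)\subset\mathcal N_s^\phi(\R^N)$), not the other way round. Second, the Brezis--Lieb splitting with $v_k=u_k-u_\infty$ is an unnecessary detour at the minimal level: the paper simply splits into the cases $u_\infty\ne0$ and $u_\infty=0$; if $u_\infty\ne0$, weak lower semicontinuity of $\|\cdot\|_s$ together with $u_\infty\in\mathcal N_s^\phi(\Omega)$ gives $c_s^\phi(\Omega)\le J_s(u_\infty)=\tfrac{s}{N}\|u_\infty\|_s^2\le\liminf\tfrac{s}{N}\|u_k\|_s^2=c_s^\phi(\Omega)$, which forces $\|u_k\|_s\to\|u_\infty\|_s$ and hence strong convergence directly --- no need to analyse $v_k$ separately or to argue why $u_\infty$ must vanish in the non-compact case. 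With these adjustments your outline coincides with the paper's argument.
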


\begin{proof}
Since $J_s^\prime(u_k)u_k=\|u_k\|_s^2-|u_k|^{\ds}_{\ds}$, we have, by \eqref{eq:hyp_main_teo}, that
\begin{align}
\frac{s}{N}\|u_k\|_s^2=J_s(u_k)-\frac{1}{\ds}J_s^\prime(u_k)u_k
\leq C+o(1)\|u_k\|_s. \label{eq:equiv_Juk}
\end{align}
Therefore, $(u_k)_{k\in\N}$ is bounded in $D_0^s(\Omega)^\phi$ and, up to a subsequence, there exists $u\in D_0^s(\Omega)^\phi$ such that
\begin{equation}\label{weak}
u_k \weakly u \quad\text{weakly in } D_0^s(\Omega)^\phi \text{ as }k\to\infty. 
\end{equation}
By \Cref{thm:rellich_type}, up to a subsequence, $u_k\to u$ strongly in $L^\nu(\Omega)$ for any $\nu\in[2,\ds)$. Then,
\begin{equation*}
\lim_{k\to\infty}\int_{\Omega}{|u_k|}^{\ds-2}u_k\varphi=\int_{\Omega}|u|^{\ds-2}u\varphi \quad \text{ for all } \varphi\in C_c^\infty{(\Omega)}^\phi. 
\end{equation*}
Hence, for any $\varphi\in C_c^\infty(\Omega)^\phi$,
\begin{align}
J_s^\prime(u)\varphi&=\cE_s(u,\varphi)-\int_{\Omega}|u(x)|^{\ds-2}u(x)\varphi(x)\d{x} \notag\\ &= \lim_{k\to\infty}\left[\cE_s(u_k,\varphi) - \int_{\Omega}|u_k(x)|^{\ds-2}u_k(x)\varphi(x)\d{x} \right]=\lim_{k\to\infty} J_s^\prime(u_k)\varphi=0.\label{eq:lim_func}
\end{align}

We now consider two cases:

\medskip

\noindent(I) If $u\neq 0$ then $u\in \mathcal N_s^\phi(\Omega)$ and, by \eqref{eq:hyp_main_teo} and~\eqref{eq:equiv_Juk},
\begin{align*}
c_s^\phi(\Omega)&\leq J_s(u)=\frac{1}{2}\|u\|_s^2-\frac{1}{\ds}|u|^{\ds}_{\ds}=\frac{s}{N}\norme{u}_s^2\leq \liminf_{k\to \infty} \frac{s}{N} \norme{u_k}_s^2=c_s^\phi(\Omega)+o(1),
\end{align*}
This with \eqref{weak} implies that $u_k\to u$ strongly in $D_0^s(\Omega)^\phi$ and then $J_s(u)=c_s^\phi(\Omega)$.

\medskip

\noindent (II) If $u=0$, then \eqref{eq:hyp_main_teo} and \eqref{eq:equiv_Juk} imply that
\begin{equation*}
\int_{\Omega}|u_k(x)|^{\ds}\dx=\frac{N}{s}\left(J_s(u_k)-\frac{1}{2}J_s^\prime(u_k)u_k\right)\to \frac{N}{s}c_s^\phi(\Omega)\quad \text{ as }k\to\infty.
\end{equation*}
Note that $u_k$ satisfies the assumptions of Lemma~\ref{arg:lem} (with $s_k=s$ for all $k\in\N$). Let $\lambda_k\to 0$, $\xi_k\to \xi$, $w_k$, and $w$ as given by Lemma~\ref{arg:lem} and define $d:=\lim_{k\to\infty} \lambda_k^{-1}\operatorname{dist}(\xi_k, \partial \Omega)\in [0,\infty],$ $\Omega_k:=\{y\in\R^N\::\: \lambda_k y + \xi_k\in\Omega\}$.

If $d=\infty$, then, by~\eqref{eq:dis_xik_Omega:2:l}, we have that $\xi_k\in \Omega$. Hence, for every $X\subset\subset \R^N$, there exists $k_0$ such that $X\subset \Omega_k$ for all $k\geq k_0$. Thus, for $d=\infty$ we set $\esp:=\R^N$.  Otherwise, if $d\in[0,\infty)$, then, as $\lambda_k\to 0$, we have that $\xi\in\partial \Omega$. If a subsequence of $(\xi_k)$ is contained in $\ov{\Omega}$, we set $\ov{d}:=-d$, otherwise we take $\ov{d}:=d$. We define $\mathbb H:=\left\{y\in \R^N:y\cdot\nu>\ov{d}\right\},$ 
where $\nu$ is the inward-pointing unit normal to $\partial \Omega$ at $\xi$. Since $\xi$ is a $G$-fixed point so is $\nu$. Thus $\Omega^G\neq \emptyset$, $\mathbb H$ is $G$-invariant and $\mathbb{H}^G\neq \emptyset$. If $X$ is compact and $X\subset \mathbb H$, there exists $k_0$ such that $X\subset \Omega_k$ for all $k\geq k_0$. Moreover, if $X$ is compact and $X\subset \R^N\setminus \ov{\mathbb H}$, then $X\subset\R^N\setminus \Omega_k$ for $k$ large enough. As $w_k\to w$ a.e. in $\R^N$, this implies that $w=0$ a.e. in $\R^N\setminus\mathbb{H}$. So $w\in D_0^s(\mathbb H)^\phi$. Then, for $d<\infty$, we set $\esp:= \mathbb H$.

For $\varphi\in C_c^\infty(\esp)^\phi$, we define $\varphi_k(x):=\lambda_k^{-\frac{N}{2}+s}\varphi\left(\frac{x-\xi_k}{\lambda_k}\right)$.  Since $\xi_k$ is a $G$-fixed point, then $\varphi_k$ is $\phi$-equivariant and there is $k_0$ large enough such that $\operatorname{supp }\varphi_k\subset\Omega$. By \eqref{issues}, $\varphi_k$ is uniformly bounded in $D_0^s(\Omega)$; hence,~\eqref{eq:lim_func}, Lemmas~\ref{lem:deriv_func_equiv} and~\ref{arg:lem}, and a direct computation yield $J_s^\prime(w_k)\varphi=J_s^\prime(u_k)\varphi_k=o(1)$ as $k\to\infty.$ Therefore $w$ is a nontrivial weak solution of~\eqref{eq:main_teo}. From Lemma~\ref{lem:c_infinity}, we conclude that $c_s^\phi(\Omega)=c_s^\phi(\esp)=c^\phi_s(\R^N)$. Hence, 
\begin{equation*}
c_s^\phi(\R^N)\leq J_s(w)=\frac{s}{N}\|w\|_s^2 \leq \liminf_{k\to \infty} \frac{s}{N} \|w_k\|_s^2=\frac{s}{N}\liminf_{k\to\infty}\|u_k\|_s^2=c_s^\phi(\R^N).
\end{equation*}

Thus, $J_s(w)=c_{\infty}^\phi$ and $w_k\to w$ strongly in $D^s(\R^N)$ as $k\to\infty$. By a change of variable, this implies that
\begin{equation*}
o(1)=\norme{w_k-w}_s=\norme{u_k-\lambda_k^{-\frac{N}{2}+s}w\left(\frac{\cdot-\xi_k}{\lambda_k}\right)}_s\quad \text{as $k\to\infty$}.
\end{equation*}
This ends the proof. 
\end{proof}

\section{Existence, nonexistence, and convergence of solutions in symmetric bounded domains}\label{sec:bdd}

We begin this section with the proof of the nonexistence result stated in the introduction.

\begin{proof}[Proof of Proposition \ref{prop:nonex}]
 By contradiction, let $u$ be a nontrivial nonnegative solution of \eqref{nonex:eq}. Let 
 \begin{align*}
{\mathscr{D}}^s u(z):=\lim_
{\substack{x\to z \\x\in B, \frac{x}{|x|}=z}}
\frac{u(x)}{(1-|x|^2)^s}
=\frac{1}{2^s}\lim_
{\substack{x\to z \\x\in B, \frac{x}{|x|}=z}}
\frac{u(x)}{(1-|x|)^s}
=\frac{1}{2^s}\lim_
{\substack{x\to z \\x\in B, \frac{x}{|x|}=z}}
\frac{u(x)}{\operatorname{dist}(x,\R^N\backslash B)^s}.
\end{align*}
By \cite[Corollary 1.9 and Lemma 2.1]{AJS18_poisson}, we have that
\begin{align*}
{\mathscr{D}}^s u(z)=\frac{\Gamma(\frac{N}{2})}{4^{s}\pi^{\frac{N}{2}}\Gamma(s)^2s}\int_B \frac{(1-|y|^2)^s}{|y-z|^N}u(y)^{\ds-1}\d{y}>0\quad \text{ for }z\in \partial B.
\end{align*}
However, by the Pohozaev identity \cite[Corollary 1.7]{rs15}, $\int_{\partial B}|{\mathscr{D}}^s u(z)|^2\ d\sigma=0.$  This yields a contradiction, and therefore \eqref{nonex:eq} has no nontrivial nonnegative solutions.
\end{proof}

For bounded domains without fixed points, we have the following existence result.
\begin{prop}\label{cor:nonfixed}
Assume that $G$ and $\phi$ verify assumptions \textnormal{\textbf{($A_1$)}} and \textnormal{\textbf{($A_2$)}}. Let $\Omega$ be a $G$-invariant bounded smooth domain in $\R^N$ such that $\overline{\Omega}^G=\emptyset$ and let $s>0$. Then, the problem
\begin{equation}\label{eq:cor-crit}
\begin{cases}
(-\Delta)^su=|u|^{\ds-2}u, \\
u\in D_0^s(\Omega)^\phi,
\end{cases}
\end{equation}
has a least-energy solution. The solution is sign-changing if $\phi:G\to \mathbb Z_2$ is surjective.
\end{prop}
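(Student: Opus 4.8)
The plan is to obtain the least-energy $\phi$-equivariant solution by applying the concentration dichotomy of Theorem~\ref{thm:concentration} to a minimizing sequence on the Nehari set $\mathcal{N}_s^\phi(\Omega)$, and then ruling out the ``bad'' alternative (II) using the hypothesis $\overline{\Omega}^G=\emptyset$. First I would invoke Lemma~\ref{lem:min_max}(c) (or the Ekeland variational principle applied to $J_s$ on the natural constraint $\mathcal{N}_s^\phi(\Omega)$, which is legitimate by Lemma~\ref{lem:min_max}(b)) together with a standard mountain-pass/deformation argument in the $\phi$-equivariant space $D_0^s(\Omega)^\phi$ to produce a Palais--Smale sequence $(u_k)_{k\in\N}\subset D_0^s(\Omega)^\phi$ at the level $c_s^\phi(\Omega)$, i.e. satisfying $J_s(u_k)\to c_s^\phi(\Omega)$ and $J_s'(u_k)\to 0$ in $(D_0^s(\Omega)^\phi)'$. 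This is exactly hypothesis \eqref{eq:hyp_main_teo} of Theorem~\ref{thm:concentration}.

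Next I would apply Theorem~\ref{thm:concentration} to this sequence. If alternative (I) holds, the sequence converges strongly in $D_0^s(\Omega)$ to a minimizer $u$ of $J_s$ on $\mathcal{N}_s^\phi(\Omega)$, and by the principle of symmetric criticality (Lemma~\ref{lem:deriv_func_equiv}) this minimizer is a genuine weak solution of \eqref{eq:cor-crit} in the full space $D_0^s(\Omega)$, hence a least-energy $\phi$-equivariant solution; we would be done in this case. The crux, then, is to show that alternative (II) cannot occur. In case (II), item (i) of Theorem~\ref{thm:concentration} gives a concentration point $\xi\in(\overline{\Omega})^G$. But the hypothesis $\overline{\Omega}^G=\emptyset$ is precisely the statement that there are no such points, so alternative (II) is impossible. (One must be slightly careful that $(\overline{\Omega})^G$ in the theorem is the set of $G$-fixed points of the closed set $\overline{\Omega}$; since $\overline{\Omega}^G=\emptyset$ as hypothesized, this is empty, yielding the contradiction.) Therefore only (I) can occur and existence follows.

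Finally, for the sign-changing assertion: if $\phi:G\to\mathbb{Z}_2$ is surjective, choose $g_0\in G$ with $\phi(g_0)=-1$; then any nonzero $\phi$-equivariant function $u$ satisfies $u(g_0x)=-u(x)$, so $u$ takes both signs (and $\{u>0\}$, $\{u<0\}$ are both nonempty, being related by the isometry $g_0$). Since the least-energy solution $u_s$ produced above is nontrivial and $\phi$-equivariant, it is sign-changing.

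The main obstacle I anticipate is not conceptual but bookkeeping: one must verify that the Palais--Smale sequence genuinely lives in the $\phi$-equivariant subspace (so that Theorem~\ref{thm:concentration} applies verbatim) and that the minimizer obtained in case (I), a priori only a critical point of $J_s$ restricted to $D_0^s(\Omega)^\phi$, is in fact a critical point of $J_s$ on all of $D_0^s(\Omega)$ — this is where Lemma~\ref{lem:deriv_func_equiv} is essential. A minor subtlety is matching the notation $(\overline{\Omega})^G$ of the concentration theorem with the hypothesis $\overline{\Omega}^G=\emptyset$, and checking that the concentration point $\xi$ being a $G$-fixed point of $\overline{\Omega}$ genuinely contradicts this hypothesis regardless of whether $\xi$ lies on $\partial\Omega$ or in the interior.
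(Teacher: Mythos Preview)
Your proposal is correct and follows essentially the same route as the paper: produce a Palais--Smale sequence at level $c_s^\phi(\Omega)$ via Lemma~\ref{lem:min_max} and Willem's mountain-pass theorem, apply Theorem~\ref{thm:concentration}, and exclude alternative (II) because it would force a $G$-fixed point in $\overline{\Omega}$. The only cosmetic difference is that where you invoke the ``natural constraint'' clause of Lemma~\ref{lem:min_max}(b) to pass from a minimizer on $\mathcal{N}_s^\phi(\Omega)$ to a free critical point of $J_s$ on $D_0^s(\Omega)^\phi$, the paper spells this out via the Lagrange multiplier computation (testing \eqref{lm} with $\varphi=u$ forces $\lambda=0$) before applying Lemma~\ref{lem:deriv_func_equiv}.
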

\begin{proof}
By a), c) of Lemma~\ref{lem:min_max} and \cite[Thm. 2.9]{Will96} there is a sequence $(u_k)$ such that~\eqref{eq:hyp_main_teo} holds. Then, by \Cref{thm:concentration}, since alternative (II) cannot hold due to the lack of fixed points in $\overline{\Omega}$, we conclude that $J_s$ attains a minimum $u\in\mathcal N_s^\phi(\Omega)$. Then there is a Lagrange multiplier $\lambda\in \R$ such that 
\begin{align}\label{lm}
 J_s'(u)\varphi = \lambda\left(2\cE_s(u,\varphi)-2^\star_s\int_\Omega |u|^{2^\star_s-2}u\varphi \right) \qquad \text{ for all }\varphi\in D^s_0(\Omega)^\phi.
\end{align}
Testing with $\varphi=u$, we obtain that 
$0=(1-2\lambda)\|u\|_s^2+(2^\star_s\lambda-1)|u|^{2^\star_s}_{2^\star_s}
 =(1-2\lambda+2^\star_s\lambda-1)|u|^{2^\star_s}_{2^\star_s}
 =(2^\star_s-2)\lambda|u|^{2^\star_s}_{2^\star_s}.$
Since $u\neq 0$, this implies that $\lambda=0$. Then \eqref{lm} and Lemma \ref{lem:deriv_func_equiv} imply that $u$ is a weak solution of \eqref{eq:cor-crit}.
\end{proof}

Next, we show some convergence properties of the solutions to~\eqref{eq:cor-crit}.  We begin with an auxiliary lemma.

\begin{lem}\label{lem:bds} Let $G$ and $\phi$ verify \textnormal{\textbf{($A_1$)}}, \textnormal{\textbf{($A_2$)}} and let $\Omega$ be a $G$-invariant bounded smooth domain in $\R^N$ such that $\overline{\Omega}^G=\emptyset$.  Let $s>0$, $N>2s,$ and $0<\delta< \min\{s,\frac{N}{2}-s\}$. For each $t\in(s-\delta,s+\delta)$, let $u_t$ be a least-energy solution to~\eqref{eq:cor-crit} given by Proposition~\ref{cor:nonfixed}. Then there is a constant $C>1$ depending only on $\delta,$ $\Omega$, and $s$ such that
\begin{align}\label{ut}
C^{-1}<\|u_{t}\|_{t}<C\qquad \text{ for all }t\in(s-\delta,s+\delta).
\end{align}
\end{lem}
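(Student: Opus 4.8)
The plan is to obtain uniform upper and lower bounds on the energy level $c_t^\phi(\Omega) = \frac{s_t}{N}\|u_t\|_t^2$ (using $t$ in place of $s$ in the exponent), which by the Nehari identity $\|u_t\|_t^2 = |u_t|_{\dsa t}^{\dsa t}$ is equivalent to \eqref{ut}. First I would record that, for any least-energy solution $u_t$, testing the equation with $u_t$ gives $\|u_t\|_t^2=|u_t|_{\dsa t}^{\dsa t}$, so $c_t^\phi(\Omega)=J_t(u_t)=\left(\frac12-\frac{1}{\dsa t}\right)\|u_t\|_t^2=\frac{t}{N}\|u_t\|_t^2$; hence it suffices to show $C^{-1}<c_t^\phi(\Omega)<C$ uniformly for $t\in(s-\delta,s+\delta)$.

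For the \textbf{lower bound}, I would combine the Nehari constraint with the Sobolev inequality of Theorem~\ref{thm:sobolev}: for $u_t\in\cN_t^\phi(\Omega)$ one has $\|u_t\|_t^2=|u_t|_{\dsa t}^{\dsa t}\le \kappa_{N,t}^{\dsa t}\|u_t\|_t^{\dsa t}$, so $\|u_t\|_t^{\dsa t-2}\ge \kappa_{N,t}^{-\dsa t}$, i.e. $\|u_t\|_t\ge \kappa_{N,t}^{-N/(2t)}$. Since $t\mapsto\kappa_{N,t}$ is continuous and strictly positive on the compact interval $[s-\delta,s+\delta]$ (see \eqref{eq:best_constant}) and $\dsa t=\frac{2N}{N-2t}$ is bounded there (this is why $\delta<\frac N2-s$ is assumed), the quantity $\kappa_{N,t}^{-N/(2t)}$ is bounded below by a positive constant depending only on $N,s,\delta$. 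This gives $c_t^\phi(\Omega)=\frac tN\|u_t\|_t^2\ge \frac{s-\delta}{N}\inf_t\kappa_{N,t}^{-N/t}>0$.

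For the \textbf{upper bound}, the idea is to produce a competitor in $\cN_t^\phi(\Omega)$ with uniformly bounded energy. Fix one function $\varphi\in C_c^\infty(\Omega)^\phi$ with $\varphi\not\equiv0$ (this exists since $D_0^s(\Omega)^\phi$ is infinite-dimensional by \cite[Theorem 3.1]{BCM05}, and one can take a single $\varphi_\phi$ as in \eqref{eq:def_phi} built from a bump function sitting near a non-fixed point orbit — crucially $\varphi$ does not depend on $t$). Since $\varphi\in C_c^\infty(\R^N)$, the map $t\mapsto\|\varphi\|_t^2=\int_{\R^N}|\xi|^{2t}|\widehat\varphi(\xi)|^2\d\xi$ is continuous and finite on $[s-\delta,s+\delta]$, hence bounded there by some $M$; likewise $|\varphi|_{\dsa t}$ stays in a compact range of positive reals. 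Scaling $\varphi$ to land on the Nehari manifold, the unique $\tau_t>0$ with $\tau_t\varphi\in\cN_t^\phi(\Omega)$ satisfies $\tau_t^{\dsa t-2}=\|\varphi\|_t^2/|\varphi|_{\dsa t}^{\dsa t}$, which is bounded above and below uniformly in $t$, so $c_t^\phi(\Omega)\le J_t(\tau_t\varphi)=\frac tN\|\tau_t\varphi\|_t^2=\frac tN\tau_t^2\|\varphi\|_t^2\le C$. Combining, $C^{-1}<c_t^\phi(\Omega)<C$, and the claim \eqref{ut} follows from $\|u_t\|_t^2=\frac Nt c_t^\phi(\Omega)$.

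The main obstacle, and the only place requiring care, is the uniformity in $t$ of all the constants: one must check that the Sobolev constant $\kappa_{N,t}$, the critical exponent $\dsa t$, and the seminorm $\|\varphi\|_t$ of the fixed competitor vary continuously (and stay in compact subsets of $(0,\infty)$) as $t$ ranges over $(s-\delta,s+\delta)$ — this is where the hypotheses $\delta<s$ and $\delta<\frac N2-s$ are used, to keep $t$ bounded away from $0$ and $\frac N2$. Everything else is the standard Nehari/Sobolev bookkeeping and does not depend on $t$ once the competitor $\varphi$ is chosen once and for all.
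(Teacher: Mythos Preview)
Your proposal is correct and follows essentially the same approach as the paper's proof: both use the Sobolev inequality on the Nehari identity for the lower bound and a fixed compactly supported competitor scaled onto $\cN_t^\phi$ for the upper bound, exploiting the continuity in $t$ of $\kappa_{N,t}$, $\dsa{t}$, $\|\varphi\|_t$, and $|\varphi|_{\dsa{t}}$ on the compact interval $[s-\delta,s+\delta]$. Your lower-bound computation $\|u_t\|_t\ge\kappa_{N,t}^{-N/(2t)}$ is a bit more direct than the paper's (which phrases the same Sobolev estimate through the auxiliary function $F_t(u)=\|u\|_t^2-|u|_{\dsa{t}}^{\dsa{t}}$ and extracts an explicit threshold $a$), and you are more careful than the paper in requiring $\varphi\in C_c^\infty(\Omega)^\phi$ so that the competitor actually lies in $\cN_t^\phi(\Omega)$---the paper writes only $\varphi\in C_c^\infty(\Omega)\setminus\{0\}$ and $k_t\varphi\in\cN_t$, which should read $\cN_t^\phi$.
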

\begin{proof}
 Let $N$, $s$, $\delta,$ $t$, and $u_t$ be as in the statement and let $\varphi\in C^\infty_c(\Omega)\backslash\{0\}$. Then
 \begin{align*}
k_t  \varphi\in\cN_t,\qquad k_t:=\left(\frac{\|\varphi\|_t^2}{|\varphi|_{2^\star_t}^{2^\star_t}}\right)^\frac{1}{2^\star_t-2}.
 \end{align*}
 Then, since $\dsa{t}-2>0$ for all $t\in[s-\delta,s+\delta]$,
 \begin{align*}
  c^\phi_t(\Omega)\leq J_t(k_t\varphi)\leq \sup_{t\in(s-\delta,s+\delta)}J_t(k_t\varphi)=:C_1,
 \end{align*}
 where $C_1>0$ depends only on $\varphi$, $s$, $\delta$, and $\Omega$. Moreover, since $u_t\in\cN_t^\phi$ is a least-energy solution,
 \begin{align*}
  \|u_{t}\|^2_{t}
  =\frac{N}{t}c^\phi_t(\Omega)
  \leq \frac{N}{s-\delta}C_1=:C_2.
\end{align*}
This establishes the upper bound in \eqref{ut}. To obtain the lower bound, let
\begin{equation*}
F_t(u):=\norme{u}^2_t-|u|_{\dsa{t}}^{\dsa{t}}\geq \norme{u}_t^2-\kappa^{\dsa{t}}_{N,t}\norme{u}_t^{\dsa{t}}\quad \text{ for }u\in D_0^t(\Omega),
\end{equation*}
where $\kappa_{N,t}$ is explicitly given by \eqref{eq:best_constant}. In particular, by the definition of $\delta$, $N>2t$ and therefore
\begin{align*}
 \sup_{t\in(s-\delta,s+\delta)}\kappa_{N,t}
 =\sup_{t\in(s-\delta,s+\delta)}2^{-2t}\pi^{-t}\frac{\Gamma(\frac{N-2t}{2})}{\Gamma(\frac{N+2t}{2})} \left(\frac{\Gamma(N)}{\Gamma(N/2)}\right)^{2t/N}=:K,
\end{align*}
where $K$ depends only on $N,s,$ and $\delta$. Then, for $\|u\|_t<1$,
\begin{equation*}
F_t(u)\geq  \norme{u}_t^2\left(1-K^{\dsa{t}}\norme{u}_t^{\dsa{t}-2}\right)\geq\norme{u}_t^2\left(1-(K+1)^{\dsa{s+\delta}} \norme{u}_t^{\frac{4(s-\delta)}{N-2(s-\delta)}}\right).
\end{equation*}
In particular, $F_t(u)>0$ for all $t\in(s-\delta,s+\delta)$ if
\begin{align*}
 0<\|u\|_t < (K+1)^{-\frac{N (2 \delta +N-2 s)}{2 (s-\delta ) (N-2 (\delta +s))}}=:a.
\end{align*}
Since $F_t(u_t)=0$ because $u_t\in \cN_t$, necessarily $\|u_t\|_t>a$ for $t\in (s-\delta,s+\delta).$ This yields the lower bound in \eqref{ut}.
\end{proof}

Our main convergence result is the following.

\begin{theo}\label{thm:conv:bdd}
Let $G$ and $\phi$ verify \textnormal{\textbf{($A_1$)}}, \textnormal{\textbf{($A_2$)}}, $\Omega$ be a $G$-invariant bounded smooth domain in $\R^N$ such that $\overline{\Omega}^G=\emptyset$, $m\in\mathbb N_0$, $(\sigma_k)_{k\in\N}\subset[0,1]$, $\lim_{k\to\infty}\sigma_k=:\sigma\in[0,1]$, $s_k:=m+\sigma_k>0$, $s:=m+\sigma>0$, and $N>2\max\{s,s_k\}$ for all $k\in\N$.  Let $u_{s_k}$ be a least-energy solution of
\begin{equation*}
(-\Delta)^{s_k} u_{s_k}=|u_{s_k}|^{2^\star_{s_k}-2}u_{s_k}, \qquad u_{s_k}\in D_0^{s_k}(\Omega)^\phi.
\end{equation*}
Then, up to a subsequence,
\begin{equation*}
u_{s_k}\to u \quad \textnormal{strongly in $D_0^{s-\delta}(\Omega)$ as $k\to\infty$ for all $\delta\in(0,s)$,}  
\end{equation*}
where $u$ is a least-energy solution of
\begin{equation}\label{eq:crit_limit}
(-\Delta)^{s} u=|u|^{2^\star_{s}-2}u, \qquad u\in D_0^s(\Omega)^\phi.
\end{equation}
\end{theo}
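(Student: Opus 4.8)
The plan is to follow the concentration-compactness machinery already established, applied now along a sequence of exponents rather than for a fixed one. First I would record the uniform two-sided bound $C^{-1}<\|u_{s_k}\|_{s_k}<C$ from Lemma~\ref{lem:bds}, which immediately gives, via $J_{s_k}(u_{s_k})=\tfrac{s_k}{N}\|u_{s_k}\|_{s_k}^2$, that the energies $c^\phi_{s_k}(\Omega)$ stay in a compact interval of $(0,\infty)$; together with the lower bound $C^{-1}<|u_{s_k}|_{2^\star_{s_k}}<C$ this verifies hypothesis \eqref{lb:l} of Lemma~\ref{arg:lem}. Since each $u_{s_k}$ is a genuine solution, $J'_{s_k}(u_{s_k})=0$, so \eqref{addhip} holds trivially. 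By Lemma~\ref{A:l} and the compact embedding in Theorem~\ref{thm:rellich_type}, passing to a subsequence I get $u_{s_k}\weakly u$ in some common space and $u_{s_k}\to u$ strongly in $D^{s-\delta}_0(\Omega)$ and in $L^p(\Omega)$ for $p<2^\star_{s-\delta}$, for every $\delta\in(0,s)$; this last point uses that all exponents $2^\star_{s_k}$ stay strictly below $2^\star_{s+\eta}$ for small $\eta$.

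The dichotomy is then: either $u\neq 0$ or $u=0$. If $u\neq0$, I would pass to the limit in the weak formulation against test functions $\varphi\in C^\infty_c(\Omega)^\phi$ — here one must check that $|u_{s_k}|^{2^\star_{s_k}-2}u_{s_k}\to |u|^{2^\star_{s}-2}u$ in $L^1_{loc}$, which follows from the $L^p_{loc}$ convergence plus the uniform $L^{2^\star_{s_k}}$ bound and the continuity of $t\mapsto 2^\star_t$ — to conclude $u$ is a $\phi$-equivariant weak solution of \eqref{eq:crit_limit}, hence $u\in\cN^\phi_s(\Omega)$ and $J_s(u)\ge c^\phi_s(\Omega)$. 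Then I would upgrade weak to strong convergence in $D^{s-\delta}_0(\Omega)$: the strong $L^{2^\star_{s-\delta}}$-type control and the identity $\|u_{s_k}\|^2_{s_k}=|u_{s_k}|^{2^\star_{s_k}}_{2^\star_{s_k}}$ pin down the limit of the norms, and a lower-semicontinuity/Fatou argument (as in \eqref{fatou}) combined with $J_s(u)\le\liminf\tfrac{s_k}{N}\|u_{s_k}\|^2_{s_k}$ forces $J_s(u)=c^\phi_s(\Omega)$, i.e. $u$ is least-energy and the convergence is strong in every $D^{s-\delta}_0(\Omega)$.

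The remaining task is to rule out $u=0$. If $u=0$, then $u_{s_k}\to0$ in $D^{s-\delta}_0(\Omega)$, which is exactly hypothesis \eqref{eq:strong_conv_u_s_k:l} of Lemma~\ref{arg:lem}; that lemma then produces rescaled functions $w_k(y)=\lambda_k^{\frac N2-s}u_{s_k}(\lambda_k y+\xi_k)$ with $\xi_k\to\xi\in(\R^N)^G$, $\lambda_k\to0$, $\operatorname{dist}(\xi_k,\Omega)\le C_1\lambda_k$, and a nontrivial weak limit $w\in D^s(\R^N)^\phi$. Since $\xi_k\to\xi$ and $\lambda_k\to0$ with $\operatorname{dist}(\xi_k,\Omega)=O(\lambda_k)$, the point $\xi$ lies in $\overline{\Omega}^G$, contradicting the standing hypothesis $\overline{\Omega}^G=\emptyset$. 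Hence $u\ne0$ and we are in the first case. \textbf{The main obstacle} I anticipate is the bookkeeping around the varying critical exponent: ensuring that all the Sobolev and Hölder estimates are \emph{uniform} in $k$ (the constants $\kappa_{N,s_k}$, $c_{N,\sigma_k}$ are bounded on compact parameter intervals, which is fine), and — most delicately — passing to the limit in the nonlinearity $|u_{s_k}|^{2^\star_{s_k}-2}u_{s_k}$ when both the function and the exponent move; this needs a careful truncation/Vitali argument rather than a one-line dominated-convergence appeal, since no single $L^p$ bound dominates all the $2^\star_{s_k}$ simultaneously near the critical threshold.
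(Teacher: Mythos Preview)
Your plan matches the paper's proof almost step for step: uniform bounds from Lemma~\ref{lem:bds}, compactness via Lemma~\ref{A:l} and Theorem~\ref{thm:rellich_type}, passage to the limit in the weak formulation, and exclusion of $u=0$ by invoking Lemma~\ref{arg:lem} to produce a $G$-fixed concentration point in $\overline{\Omega}$, contradicting $\overline{\Omega}^G=\emptyset$. Two small remarks:

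\emph{(i)} The strong $D^{s-\delta}_0(\Omega)$ convergence is already given by compactness; there is nothing to ``upgrade''. What remains is only to show that $u$ is least-energy, and here your sketch has a small gap: Fatou yields $c^\phi_s(\Omega)\le J_s(u)\le\liminf c^\phi_{s_k}(\Omega)=:c_*$, but this does not force equality unless you also prove $c_*\le c^\phi_s(\Omega)$. The paper obtains this reverse inequality by taking a least-energy solution $u_s\in\cN^\phi_s(\Omega)$ at level~$s$, projecting it onto $\cN^\phi_{s_k}(\Omega)$ via $t_k u_s$ with $t_k=(\|u_s\|_{s_k}^2/|u_s|_{2^\star_{s_k}}^{2^\star_{s_k}})^{1/(2^\star_{s_k}-2)}\to 1$, and noting $c^\phi_{s_k}(\Omega)\le J_{s_k}(t_k u_s)\to J_s(u_s)=c^\phi_s(\Omega)$.

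\emph{(ii)} Your ``main obstacle'' is less delicate than you fear. The paper simply fixes $\delta$ small enough that $2^\star_s-1<2^\star_{s-\delta}$ (concretely $\delta<\tfrac{(N-2s)^2}{2(N+2s)}$), so that for large $k$ one has $2^\star_{s_k}-1<2^\star_{s-\delta}$ and the strong $L^p(\Omega)$ convergence for $p<2^\star_{s-\delta}$ directly handles the nonlinearity; no Vitali-type argument is needed. For the bilinear form, note also that since $s_k$ varies one writes $\cE_{s_k}(u_{s_k},\varphi)=\int_\Omega u_{s_k}(-\Delta)^{s_k}\varphi$ and uses that $(-\Delta)^{s_k}\varphi\to(-\Delta)^s\varphi$ in $L^p$ (Lemma~\ref{phi:con}).
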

\begin{proof}
Let $s$, $s_k,$ $u_{s_k}$ be as in the assumptions and let $0<\delta<\min\{s,\frac{N}{2}-s,\frac{(N-2s)^2}{2(N+2s)}\}$.  In the following, $C>1$ denotes possibly different constants depending at most on $s$, $\delta$, $N$, and $\Omega$. Passing to a subsequence, we may assume that $s_k\in(s-\delta,s+\delta)$ for all $k\in\N$.  By Lemma~\ref{lem:bds},
\begin{align*}
 C^{-1}<\|u_{s_k}\|^2_{s_k}=|u_{s_k}|^{\dsa{s_k}}_{\dsa{s_k}}<C\qquad
 \text{ for all }k\in\N.
\end{align*}
By Lemma~\ref{A:l}, $\norme{u_{s_k}}_{s-\frac{\delta}{2}}^2\leq C \norme{u_{s_k}}_{s_k}^2<C$.  Then, by Theorem~\ref{thm:rellich_type}, there is {$u\in D_0^{s-\delta}(\Omega)$} such that
\begin{align}
u_{s_k}\to u \ \text{in $D^{s-\delta}_0(\Omega)$,}\qquad 
u_{s_k}\to u \ \text{in $L^p(\Omega)$ for $p\in[2,2^\star_{s-\delta})$}\qquad \text{as $k\to\infty$.}\label{eq:strong_conv_u_s_k}
\end{align}
Note that, since $\delta<\frac{(N-2s)^2}{2(N+2s)}$, then 
\begin{align}\label{twos}
\dsa{s}-1<\dsa{s-\delta}.
\end{align}

Moreover, using Fatou's Lemma as in \eqref{fatou}, we have that $\|u\|^2_{s}\leq \liminf_{k\to\infty}\|u_{s_k}\|_{{s_k}}^2 < C,$ and therefore $u\in D_0^s(\Omega)$.  Since $u_{s_k}$ is a least-energy solution, we have from integration by parts (see e.g.~\cite[Lemma 1.5]{AJS18}) that
\begin{align*}
0=J_{s_k}^\prime(u_{s_k})\varphi&=\cE_{s_k}(u_{s_k},\varphi)-\int_{\Omega}|{s_k}|^{2^\star_{s_k}-2}u_{s_k}\varphi=\int_{\Omega}u_{s_k}(-\Delta)^{s_k}\varphi-\int_{\Omega}|u_{s_k}|^{2^\star_{s_k}-2}u_{s_k}\varphi.
\end{align*}

Note that, by \eqref{twos}, $2^\star_{s_k}-1=2^\star_s-1+o(1)<2^\star_{s-\delta}-\eps+o(1)<2^\star_{s-\delta}$ as $k\to\infty$ with $\eps=\frac{1}{2}(2^\star_{s-\delta}-2^\star_s+1)>0$; then, by \eqref{eq:strong_conv_u_s_k}, and Lemma \ref{phi:con},
\begin{align*}
0 = \int_{\Omega} u(-\Delta)^{s}\varphi-\int_{\Omega}|u|^{2^\star_s-2}u\varphi
\qquad \text{ for all }\varphi\in C^\infty_c(\Omega),
\end{align*}
that is, $u$ is a weak solution of the limit problem \eqref{eq:crit_limit}.  Note that
\begin{align}\label{claimu}
u\neq 0 \quad \text{ in }\Omega.
\end{align}
Indeed, assume by contradiction that $u=0$. Then $u_{s_k}$ satisfies the assumptions of Lemma~\ref{arg:lem}.  Let $\lambda_k\to 0$ and $\xi_k\to\xi$ be given by Lemma~\ref{arg:lem} and define $d:=\lim_{k\to\infty} \lambda_k^{-1}\operatorname{dist}(\xi_k, \partial \Omega)\in [0,\infty].$  If $d=\infty$, then, by~\eqref{eq:dis_xik_Omega:2:l},  $\xi_k\in \Omega$. But this cannot happen since $\overline{\Omega}^G=\emptyset$ and $\xi_k\in (\R^N)^G$. On the other hand, if $d\in[0,\infty)$, then, as $\lambda_k\to 0$, we have that $\xi\in\partial \Omega$, which also cannot happen, because $\overline{\Omega}^G=\emptyset$. We have reached a contradiction and \eqref{claimu} follows.

Next, we show that $u$ is a least-energy solution, namely, that $J_{s}(u)=c_s^{\phi}(\Omega).$  By Lemma~\ref{lem:bds}, there is $C>0$ such that $C^{-1}<c_{s_k}^\phi<C$ for all $k\in\N.$ In particular, passing to a subsequence, there is $c_*$ such that $c_{s_k}^\phi\to c_*$ as $k\to\infty$.  Then, using Fatou's Lemma as in \eqref{fatou},
\begin{align}
 c_s^\phi 
 &\leq J_s(u) 
 =\left(\frac{1}{2}-\frac{1}{2_s^\star}\right)\|u\|_s^2
 \leq \liminf_{k\to\infty}\left(\frac{1}{2}-\frac{1}{2_{s_k}^\star}\right)\|u_{s_k}\|_{s_k}^2\notag\\
 &=\liminf_{k\to\infty}J_{s_k}(u_{s_k})
 =\liminf_{k\to\infty}c_{s_k}^{\phi}
 =c_*.\label{3}
\end{align}
On the other hand, by Proposition \ref{cor:nonfixed}, there is $u_s\in \cN_s$ such that $J_s(u_s)=c_s^\phi$. Then
\begin{equation}\label{1}
t_k:=\left(\frac{\|u_s\|_{s_k}^2}{|u_s|_{2^\star_{s_k}}^{2^\star_{s_k}}}\right)^{\frac{1}{2^\star_{s_k}-2}}= 1 + o(1)\quad \text{ as }k\to\infty
\end{equation}
and
\begin{equation}\label{2}
\|u_s\|_{s_k}=\|u_s\|_s+o(1), \quad |u_s|_{\dsa{s_k}}=|u_s|_{\dsa{s}}+o(1)\quad \text{ as }k\to\infty.
\end{equation}
But then, using the minimality of $u_{s_k}$, \eqref{1}, and \eqref{2},
\begin{align*}
c_*^\phi+o(1)&=c_{s_k}^\phi=J_{s_k}(u_{s_k})\leq J_{s_k}(t_{k}u_s)=J_s(u_s)+o(1)=c_s^{\phi}+o(1),\qquad \text{as $k\to\infty$.}
\end{align*}
Therefore $c_*^\phi\leq c_s^{\phi}$ and, with \eqref{3}, we conclude that $c_*^\phi=c_s^{\phi}$ and that $J_s(u)=c_s^\phi$.
\end{proof}

\begin{proof}[Proof of Theorem~\ref{main:thm:bdd}]
 The first part (existence) follows from Proposition~\ref{cor:nonfixed} and the second part (convergence) follows from Theorem~\ref{thm:conv:bdd}.
\end{proof}

 \section{Existence and convergence of entire solutions}\label{sec:ubd}
 
 Recall the definition of ${\cal N}_s^\phi$ given in \eqref{N:set} and of $J_s$ given in \eqref{eq:func_RN}.  We begin with an existence theorem.
 
 \begin{theo}\label{thm:existence_teo}
 Let $N\in\N$, $s>0,$ $N>2s$, $G$ be a closed subgroup of $O(N)$, and $\phi:G\to \mathbb{Z}_2$ be a continuous homomorphism satisfying {\bf ($A_1$)} and {\bf ($A_2$)}. Then, $J_s$ attains its minimum on $\mathcal N_s^\phi(\R^N)$. Consequently, the problem 
 \begin{equation}\label{eq6}
(-\Delta)^su=|u|^{\ds-2}u, \qquad 
u\in D^s(\R^N)^\phi,
\end{equation}
has a nontrivial $\phi$-equivariant solution.  The solution is sign-changing if $\phi$ is surjective.
 \end{theo}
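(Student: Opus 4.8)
The plan is to obtain the entire least-energy level as a least-energy level on an auxiliary bounded domain, where the concentration analysis of Section~\ref{sec:c} is available. First I would fix a ball $B:=B_R(0)\subset\R^N$ centered at the origin: since $G$ is a subgroup of $O(N)$, the set $B$ is a $G$-invariant bounded smooth domain with $0\in B^G$, so $B^G\neq\emptyset$, and Lemma~\ref{lem:c_infinity} yields $c_s^\phi(B)=c_s^\phi(\R^N)=:c$. Arguing exactly as in the proof of Proposition~\ref{cor:nonfixed} (that is, using parts (a) and (c) of Lemma~\ref{lem:min_max} together with a deformation argument, \cite[Thm.\,2.9]{Will96}), one produces a sequence $(u_k)_{k\in\N}\subset D_0^s(B)^\phi$ satisfying \eqref{eq:hyp_main_teo} with $\Omega=B$, i.e.\ $J_s(u_k)\to c$ and $J_s'(u_k)\to 0$ in $(D_0^s(B)^\phi)'$.

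Next I would feed $(u_k)$ into Theorem~\ref{thm:concentration} and track both alternatives. In alternative (I) the sequence converges strongly in $D_0^s(B)^\phi$ to a minimizer $v$ of $J_s$ on $\mathcal N_s^\phi(B)$, so $J_s(v)=c$. In alternative (II), part (iv) provides a nontrivial $v$ (a solution of \eqref{eq:main_teo} on a domain $\esp$ which is either $\R^N$ or a half-space) with $v\in\mathcal N_s^\phi(\esp)$ and $J_s(v)=c_s^\phi(\R^N)=c$. In both cases, extending $v$ by zero to $\R^N$ — the inclusion $D_0^s(\esp)\hookrightarrow D^s(\R^N)$ is isometric and preserves the $L^{\ds}$-norm, as already used in the proof of Lemma~\ref{lem:c_infinity} — gives $v\in D^s(\R^N)^\phi$ with $v\in\mathcal N_s^\phi(\R^N)$ and $J_s(v)=c=\inf_{\mathcal N_s^\phi(\R^N)}J_s$. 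Hence $v$ is a minimizer of $J_s$ on the Nehari manifold $\mathcal N_s^\phi(\R^N)$, and in particular a critical point of $J_s$ restricted to it.

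To finish I would upgrade $v$ to a weak solution of \eqref{eq6}, just as in Proposition~\ref{cor:nonfixed}. The statements of Lemma~\ref{lem:min_max} hold verbatim with $B$ replaced by $\R^N$ (their proofs use only the Sobolev inequality of Theorem~\ref{thm:sobolev} and the bound $\|u\|_s\geq a_0$ on $\mathcal N_s^\phi(\R^N)$), so $\mathcal N_s^\phi(\R^N)$ is a $C^1$-manifold which is a natural constraint for $J_s$ and there is $\lambda\in\R$ with $J_s'(v)\varphi=\lambda\bigl(2\cE_s(v,\varphi)-\ds\int_{\R^N}|v|^{\ds-2}v\varphi\bigr)$ for all $\varphi\in D^s(\R^N)^\phi$. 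Testing with $\varphi=v$ and using $v\in\mathcal N_s^\phi(\R^N)$ gives $0=(\ds-2)\lambda\,|v|_{\ds}^{\ds}$, hence $\lambda=0$; thus $J_s'(v)\vartheta=0$ for all $\vartheta\in C_c^\infty(\R^N)^\phi$, and Lemma~\ref{lem:deriv_func_equiv} (symmetric criticality) then gives $J_s'(v)\varphi=0$ for all $\varphi\in C_c^\infty(\R^N)$, i.e.\ $v$ is a nontrivial $\phi$-equivariant solution of \eqref{eq6}. For the sign-changing statement, if $\phi$ is surjective pick $g\in G$ with $\phi(g)=-1$: if $v\geq0$ a.e.\ then $0\leq v(gx)=\phi(g)v(x)=-v(x)\leq0$ forces $v\equiv0$, contradicting $v\neq0$; applying the same argument to $-v$ excludes $v\leq0$ a.e., so $v$ changes sign.

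All the compactness difficulties caused by the scaling and translation invariance \eqref{issues} are packaged inside Theorem~\ref{thm:concentration}, which I am free to use; the only point in the present assembly that deserves care is alternative (II), where one must notice that a concentration profile living a priori only on a half-space nonetheless realizes the sharp level $c_s^\phi(\R^N)$ on the Nehari manifold of the whole space, so that the natural-constraint and symmetric-criticality steps still turn it into a genuine entire solution of \eqref{eq6}.
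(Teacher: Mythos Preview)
Your proposal is correct and follows essentially the same route as the paper: pass to a ball centered at the origin so that $B^G\neq\emptyset$, use Lemma~\ref{lem:c_infinity} to get $c_s^\phi(B)=c_s^\phi(\R^N)$, produce a Palais--Smale sequence via Lemma~\ref{lem:min_max} and \cite[Thm.~2.9]{Will96}, feed it into Theorem~\ref{thm:concentration}, and then argue as in Proposition~\ref{cor:nonfixed}. Your write-up is in fact more careful than the paper's in two places: you track alternatives (I) and (II) separately (the paper's proof compresses this into a single sentence), and you spell out the sign-changing argument when $\phi$ is surjective, which the paper leaves implicit.
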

 \begin{proof}
 The unitary ball $B=\{x\in\R^N:|x|<1\}$ is $G$-invariant for every subgroup $G$ of $O(N)$. Since $0\in B^{G}$ then $c_s^\phi(B)=c_s^\phi(\R^N)$, by \Cref{lem:c_infinity}. By  a) and c) of \Cref{lem:min_max} and~\cite[Thm. 2.9]{Will96}, we obtain the existence of a sequence $(u_k)_{k\in\N}\subset D_0^s(B)^\phi$  such that $J_s(u_k)\to c_s^\phi(\Omega)$ and $J_s^\prime(u_k)\to 0$ in $(D_0^s(B)^\phi)^\prime$ as $k\to\infty.$  Then, by \Cref{thm:concentration}, there exists $u\in \mathcal N_s^\phi(\mathbb E)\subset \mathcal N_s^\phi(\R^N)$ with $J_s(u)=c_s^\phi(\R^N)$, and therefore $J_s$ attains its minimum on $\mathcal N_s^\phi(\R)$.  Arguing as in Proposition~\ref{cor:nonfixed}, we conclude that $u$ is a weak solution of \eqref{eq6}.
 \end{proof}

Next we show some convergence properties of the solutions to~\eqref{eq:frac_crit_exp} as $s_k\to s$, where $s>0$. We begin with an auxiliary lemma.

 \begin{lem}\label{lem:unbd}Assume the hypothesis of Theorem \ref{thm:existence_teo}.  Let $s>0$, $N>2s,$ and $0<\delta\leq \min\{s,\frac{N}{2}-s\}$. For each $t\in(s-\delta,s+\delta)$, let $u_t$ be a least-energy solution to~\eqref{eq6} given by Theorem~\ref{thm:existence_teo}. Then there is a constant $C>1$ depending only on $\delta$ and $s$ such that
\begin{align*}
C^{-1}<\|u_{t}\|_{t}<C\qquad \text{ for all }t\in(s-\delta,s+\delta).
\end{align*}
\end{lem}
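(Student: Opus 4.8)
The plan is to mirror the proof of Lemma~\ref{lem:bds} for bounded domains, which is the analogous statement in the compact setting, and adapt it to $\Omega=\R^N$. The two bounds are obtained independently and by rather different mechanisms: the upper bound comes from comparing the least-energy level $c_{t}^\phi(\R^N)$ against a fixed competitor, while the lower bound comes from the fact that least-energy solutions live on the Nehari manifold, together with the explicit value of the Sobolev constant $\kappa_{N,t}$ in Theorem~\ref{thm:sobolev}, which is continuous (hence bounded) in $t$ on the compact interval $[s-\delta,s+\delta]$.

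For the upper bound, I would fix a single nonzero $\phi$-equivariant test function $\varphi\in C^\infty_c(\R^N)^\phi$ (such a function exists by the discussion following assumption \textbf{($A_2$)}, or concretely via the averaging operator $\varphi\mapsto\varphi_\phi$ from \eqref{eq:def_phi} applied to a bump function not annihilated by it). For each $t\in(s-\delta,s+\delta)$ set $k_t:=(\|\varphi\|_t^2/|\varphi|_{\dsa{t}}^{\dsa{t}})^{1/(\dsa{t}-2)}$, so that $k_t\varphi\in\mathcal N_t^\phi(\R^N)$; then $c_t^\phi(\R^N)\le J_t(k_t\varphi)$. The map $t\mapsto J_t(k_t\varphi)$ is continuous on the compact interval $[s-\delta,s+\delta]$ (the quantities $\|\varphi\|_t$, $|\varphi|_{\dsa{t}}$, and $\dsa{t}$ all vary continuously in $t$ for a fixed smooth compactly supported $\varphi$ — this uses \eqref{eq:equiv_fourier} and dominated convergence), hence bounded above by some $C_1=C_1(\varphi,s,\delta)$. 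Since $u_t\in\mathcal N_t^\phi(\R^N)$ is a least-energy solution, $\|u_t\|_t^2=\frac{N}{t}J_t(u_t)=\frac{N}{t}c_t^\phi(\R^N)\le\frac{N}{s-\delta}C_1$, which gives the upper bound.

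For the lower bound, I would argue exactly as in Lemma~\ref{lem:bds}: for $u\in D^t(\R^N)$ with $\|u\|_t$ small, the Nehari functional $F_t(u):=\|u\|_t^2-|u|_{\dsa{t}}^{\dsa{t}}\ge\|u\|_t^2(1-\kappa_{N,t}^{\dsa{t}}\|u\|_t^{\dsa{t}-2})$ is strictly positive, by the Sobolev inequality of Theorem~\ref{thm:sobolev}. Setting $K:=\sup_{t\in(s-\delta,s+\delta)}\kappa_{N,t}<\infty$ (finite because $\kappa_{N,t}$ is given by the explicit continuous formula \eqref{eq:best_constant} and $N>2t$ throughout the interval, since $\delta\le\frac{N}{2}-s$), one finds a threshold $a=a(N,s,\delta)>0$ such that $F_t(u)>0$ whenever $0<\|u\|_t<a$, uniformly in $t$. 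Since $u_t\in\mathcal N_t^\phi(\R^N)$ forces $F_t(u_t)=0$, necessarily $\|u_t\|_t\ge a$, giving the lower bound. Taking $C:=\max\{\frac{N}{s-\delta}C_1, a^{-1}\}+1$ finishes the proof.

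The main obstacle — really the only nonroutine point — is ensuring the $t$-uniformity of all estimates: one must check that the competitor $k_t\varphi$ has energy bounded uniformly in $t$ and that the Sobolev constant does not blow up, both of which reduce to continuity of explicit expressions on a compact $t$-interval, using that the exponents $\dsa{t}-2$ stay bounded away from $0$ (which holds since $N>2t$ uniformly). Compared with Lemma~\ref{lem:bds} there is essentially nothing new here: the boundedness of $\Omega$ played no essential role in that argument beyond guaranteeing the existence of compactly supported $\phi$-equivariant test functions, which is equally available in $\R^N$; so the proof can be written as ``identical to that of Lemma~\ref{lem:bds}, with $\R^N$ in place of $\Omega$ and Theorem~\ref{thm:existence_teo} in place of Proposition~\ref{cor:nonfixed}.''
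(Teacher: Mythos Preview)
Your proposal is correct and matches the paper's own proof, which simply reads ``Repeat the proof from Lemma~\ref{lem:bds} with $\Omega=\R^N$.'' You have in fact been slightly more careful than the paper, in that you explicitly note the need for the fixed competitor $\varphi$ to be $\phi$-equivariant (so that $k_t\varphi\in\mathcal N_t^\phi(\R^N)$) and explain why such a $\varphi$ exists.
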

\begin{proof}
Repeat the proof from Lemma \ref{lem:bds} with $\Omega=\R^N$.
\end{proof}

\begin{theo}\label{thm:conv:entire} Assume that $G$ and $\phi$ verify assumptions \textnormal{\textbf{($A_1$)}} and \textnormal{\textbf{($A_2$)}}. Let $N\in\N$, $(s_k)_{k\in\N}\subset (0,\infty)$ such that $s_k\to s=m+\sigma>0$ as $k\to \infty$ with $m\in\N_0$, $\sigma\in[0,1]$, and $N>2\max\{s,s_k\}$ for all $k\in\N$. For $\kappa_{N,s}$ as in \eqref{eq:best_constant} and $\tau>0$ such that
\begin{align}\label{tau}
 \tau<\left(3\kappa_{N,s_k}^2\right)^{-\frac{N}{2s_k}}\qquad \text{ for all }k\in\N.
\end{align}
Let $w_{s_k}\in D^{s_k}(\R^N)^\phi$ be a least-energy solution of $(-\Delta)^{s_k} w_{s_k}=|w_{s_k}|^{2^\star_{s_k}-2}w_{s_k}$ satisfying that
\begin{align}\label{rscl}
\int_{B_1(0)} |w_{s_k}|^{2^\star_{s_k}}=\tau\qquad \text{ for all }k\in\N.
\end{align}
Then, there is a least-energy solution $w\in D^s(\R^N)^\phi$ of $(-\Delta)^{s} w=|w|^{2^\star_{s}-2}w$ such that, up to a subsequence, 
\begin{align}\label{c1}
\text{$\eta w_{s_k}\to \eta w$ in $D^{s-\delta}(\R^N)$  as $k\to\infty$ for all $\eta\in C^\infty_c(\R^N)$ and $\delta\in(0,\sigma)$.} 
\end{align}
\end{theo}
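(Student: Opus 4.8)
The strategy parallels the bounded-domain argument in Theorem~\ref{thm:conv:bdd}, but using the cut-off-function machinery of Lemmas~\ref{aship:lem}--\ref{arg:lem} to cope with the incompatibility of the spaces $D^t(\R^N)$. First I would fix $\delta\in(0,\sigma)$ small (shrinking it further as needed, e.g.\ below $\frac{N}{2}-s$) and pass to a subsequence so that $s_k\in(s-\delta,s+\delta)$ for all $k$. By Lemma~\ref{lem:unbd} applied with $\Omega=\R^N$, the least-energy solutions satisfy the two-sided bound $C^{-1}<\|w_{s_k}\|_{s_k}<C$, hence also $C^{-1}<|w_{s_k}|_{2^\star_{s_k}}^{2^\star_{s_k}}<C$ via the Nehari identity. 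Then Lemma~\ref{aship:lem} applies: up to a subsequence there is $w\in D^s(\R^N)^\phi$ with $\eta w_{s_k}\to\eta w$ in $D^{s-\delta}(\R^N)$ for every $\eta\in C^\infty_c(\R^N)$, and $w_{s_k}\to w$ in $L^q_{loc}$, a.e., and in $H^m_{loc}$. This gives \eqref{c1} as soon as we know $w$ is a least-energy solution; the remaining work is (a) $w$ solves the limit equation, (b) $w\neq 0$, and (c) $w$ has least energy.

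For (a), I would start from the weak formulation $\cE_{s_k}(w_{s_k},\varphi)=\int_{\R^N}|w_{s_k}|^{2^\star_{s_k}-2}w_{s_k}\varphi$ for $\varphi\in C^\infty_c(\R^N)$. Choosing $\eta\in C^\infty_c$ equal to $1$ on $\operatorname{supp}\varphi$, one rewrites $\cE_{s_k}(w_{s_k},\varphi)$ in terms of $\eta w_{s_k}$ modulo a commutator/tail term controlled by the uniform bound and the locality of $\varphi$; the $L^q_{loc}$ convergence with $q$ slightly below $2^\star_s$ (legitimate since $2^\star_{s_k}-1<2^\star_{s-\delta}$ for $\delta$ small, exactly as in \eqref{twos}) handles the nonlinear term, and a lemma of the type ``$\phi$-con'' / ``phi:con'' already invoked in Theorem~\ref{thm:conv:bdd} passes the linear term to the limit. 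Thus $(-\Delta)^s w=|w|^{2^\star_s-2}w$ weakly in $D^s(\R^N)^\phi$. For (b), the normalization \eqref{rscl} is the key device: if $w=0$ then $w_{s_k}\to0$ in $L^2_{loc}$, so the hypotheses of Lemma~\ref{arg:lem}'s contradiction argument are met (or one repeats that computation directly) — using Lemma~\ref{lem:bk} with $\eps=\frac12$, H\"older, the Sobolev inequality Theorem~\ref{thm:sobolev}, and the choice of $\tau$ in \eqref{tau}, one gets $\|\varphi w_{s_k}\|_{s_k}=o(1)$ for every $\varphi\in C^\infty_c(B_1(0))$, hence $\int_{B_1(0)}|w_{s_k}|^{2^\star_{s_k}}\to0$, contradicting \eqref{rscl}. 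So $w\neq0$, and in particular $w\in\cN_s^\phi(\R^N)$.

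For (c), I would run the usual two-sided energy comparison. Fatou's lemma applied to $\|\cdot\|_s$ (as in \eqref{fatou}) and the Nehari identity give
\begin{align*}
c_s^\phi(\R^N)\leq J_s(w)=\frac{s}{N}\|w\|_s^2\leq\liminf_{k\to\infty}\frac{s_k}{N}\|w_{s_k}\|_{s_k}^2=\liminf_{k\to\infty}J_{s_k}(w_{s_k})=\liminf_{k\to\infty}c_{s_k}^\phi(\R^N).
\end{align*}
For the reverse inequality, take a least-energy $w_s\in\cN_s^\phi(\R^N)$ from Theorem~\ref{thm:existence_teo}, project it onto $\cN_{s_k}^\phi$ by the scalar $t_k=(\|w_s\|_{s_k}^2/|w_s|_{2^\star_{s_k}}^{2^\star_{s_k}})^{1/(2^\star_{s_k}-2)}=1+o(1)$ (using $\|w_s\|_{s_k}\to\|w_s\|_s$, $|w_s|_{2^\star_{s_k}}\to|w_s|_{2^\star_s}$, which for $w_s\in C^\infty_c$-approximable follow from dominated convergence plus \eqref{eq:equiv_fourier}), and conclude $c_{s_k}^\phi(\R^N)\leq J_{s_k}(t_kw_s)=J_s(w_s)+o(1)=c_s^\phi(\R^N)+o(1)$. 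Combining, $J_s(w)=c_s^\phi(\R^N)$, so $w$ is a least-energy solution and \eqref{c1} holds along the chosen subsequence.

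The main obstacle I anticipate is step (a): unlike the bounded case, one cannot test against a fixed global function because $w_{s_k}$ and $w$ live in \emph{different} spaces and the bilinear form $\cE_{s_k}$ is nonlocal, so passing $\cE_{s_k}(w_{s_k},\varphi)\to\cE_s(w,\varphi)$ requires carefully localizing and estimating the long-range tails of the Gagliardo kernel uniformly in $k$ — precisely the ``delicate uniform estimates'' flagged in the introduction. A secondary subtlety is keeping all the exponent inequalities ($2^\star_{s_k}-1<2^\star_{s-\delta}$, $\delta<\sigma$, $N>2s_k$) simultaneously satisfiable, which forces $\delta$ to be chosen small depending on $s$ and $N$ at the very beginning.
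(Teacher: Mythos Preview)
Your plan follows the paper's route almost step for step: Lemma~\ref{lem:unbd} for uniform bounds, Lemma~\ref{aship:lem} for the limit profile and \eqref{c1}, Lemma~\ref{lem:bk} together with the normalization \eqref{rscl}--\eqref{tau} for nontriviality, and the two-sided Nehari/Fatou comparison for least energy. Two points deserve sharpening.

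\textbf{Step (a).} Your commutator/cut-off idea would work but is heavier than necessary. The paper bypasses it: integrate by parts to write $\cE_{s_k}(w_{s_k},\varphi)=\int_{\R^N} w_{s_k}\,(-\Delta)^{s_k}\varphi$, then split over $B_n(0)$ and its complement. On $B_n(0)$ the convergence follows from $w_{s_k}\to w$ in $L^q_{loc}$ and Lemma~\ref{phi:con}. On $\R^N\setminus B_n(0)$ one uses H\"older, the uniform $L^{2^\star_{s_k}}$ bound on $w_{s_k}$, and the \emph{uniform pointwise decay} $|(-\Delta)^{s_k}\varphi(x)|\le C(1+|x|^{N+2(s-\delta)})^{-1}$ from Lemma~\ref{phi:con2} to see that the tail vanishes as $n\to\infty$, uniformly in $k$. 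This is precisely the ``delicate uniform estimate'' you flagged, and it is the decay lemma~\ref{phi:con2} that resolves it cleanly.

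\textbf{Step (c).} Your projection argument writes $t_k=\big(\|w_s\|_{s_k}^2/|w_s|_{2^\star_{s_k}}^{2^\star_{s_k}}\big)^{1/(2^\star_{s_k}-2)}$ and asserts $\|w_s\|_{s_k}\to\|w_s\|_s$. But $w_s\in D^s(\R^N)$ need not lie in $D^{s_k}(\R^N)$ at all (for $s_k>s$ the high-frequency weight is larger; for $s_k<s$ the low-frequency tail may diverge since $w_s\notin L^2$ in general), so $\|w_s\|_{s_k}$ can be infinite. Your parenthetical ``$C^\infty_c$-approximable'' is the right fix, but it has to be carried out: take $u_{s,n}\in C^\infty_c(\R^N)$ with $u_{s,n}\to w_s$ in $D^s(\R^N)$, define $t_{k,n}$ as above with $u_{s,n}$ in place of $w_s$, and take the iterated limit $\lim_{n}\lim_{k}$. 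The paper does exactly this double limit.
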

\begin{proof}
 Let $s_k$ and $w_{s_k}$ as in the statement.  In the following, $C>0$ denotes possibly different constants independent of $k$.  By Lemma~\ref{lem:unbd}, there is $C>0$ such that
\begin{align}\label{ab}
C^{-1}<c_{s_k}^\phi(\R^N)<C\qquad \text{ for all }k\in\N.
\end{align}
We split the proof in steps. 

\medskip

\emph{Step 1: Find a limit profile for $w_{s_k}$.}  Let $\zeta\in C^\infty_c(\R)$ be such that
\begin{align}\label{zeta}
0 \leq \zeta \leq 1\quad \text{ in }\R,\quad
\zeta(r)=1\quad \text{ if }|r|\leq 1,\quad
\zeta(r)=0\quad \text{ if }|r|\geq 2,
\end{align}
and let
\begin{align}\label{zetan}
w_{s_k}^n(x):=w_{s_k}(x)\zeta_n(x),\qquad \zeta_n(x):=\zeta \left( \frac{|x| }{n} \right)\qquad \text{ for }n\in\N\text{ and }x\in \R^N. 
\end{align}
By Lemma \ref{GV} and triangle inequality, 
\begin{align}\label{wsknbd}
\|w_{s_k}^n\|_{s_k}<C\quad \text{ for all } n,k\in\N.
\end{align}
By Lemma~\ref{aship:lem}, there is $w^n_s\in D^s(\R^N)^\phi$ such that, up to a subsequence,
\begin{align}\label{wk}
 \varphi w^n_{s_k}\to \varphi w^n_s\quad \text{ in }D^{s-\delta}(\R^N)\text{ as }k\to\infty\text{ for all }n\in\N\text{ and }\varphi\in C^\infty_c(\R^N),
\end{align}
and, by a standard diagonalization argument, we may assume that $w^n_s = w^m_s$ in $B_n(0)$ for all $m>n,$ $m,n\in\N.$  Moreover, using Fatou's Lemma (as in \eqref{fatou}) and \eqref{wsknbd}, $\|w_s^n\|_{s}\leq \liminf\limits_{k\to\infty}\|w_{s_k}^n\|_{s_k}<C$ for all $n\in\N,$ and therefore there is $w\in D^s(\R^N)^\phi$ such that, up to a subsequence,
\begin{align}\label{wk2}
 w^n_s \weakly w\quad \text{ weakly in }D^s(\R^N)\quad \text{ as }n\to\infty,
\end{align}
but then $w^n_s=w$ in $B_n(0)$ for all $n\in \N$ and, by Lemma \ref{aship:lem}, we deduce that
\begin{align}\label{app2}
 w_{s_k}\to w\quad \text{ in }L^q_{loc}(\R^N)\quad \text{ as }k\to\infty\quad \text{ for }q\in[1,2^\star_s)
\end{align}
and \eqref{c1} follows from \eqref{wk} taking $n$ large enough.

\medskip

\emph{Step 2: Show that $w$ is a weak solution.}  Let $\varphi\in C^\infty_c(\R^N)$ and $n\in\N$. Observe that, by  \eqref{app2} and Lemma \ref{phi:con},
\begin{align}
\lim_{k\to\infty}\int_{B_n(0)}w_{s_k}(-\Delta)^{s_k}\varphi
=\int_{B_n(0)}w(-\Delta)^{s}\varphi
=\int_{\R^N}w(-\Delta)^{s}\varphi
\label{id1}
\end{align}
and, by Hölder's inequality, \eqref{wsknbd}, Lemma \ref{thm:sobolev}, and Lemma \ref{phi:con},
\begin{align}
\lim_{k\to\infty}&\int_{\R^N\backslash B_n(0)}w_{s_k}(-\Delta)^{s_k}\varphi
\leq \lim_{k\to\infty}|w_{s_k}|_{2^\star_{s_k}}
\left(\int_{\R^N\backslash B_n(0)}|(-\Delta)^{s_k}\varphi|^{(2^\star_{s_k})'}\right)^\frac{1}{(2^\star_{s_k})'}\notag\\
&\leq C\lim_{k\to\infty}\left(\int_{\R^N\backslash B_n(0)}|(-\Delta)^{s_k}\varphi|^{{\frac{2N}{N+2s_k}}}\right)^{\frac{N+2s_k}{2N}}= C\left(\int_{\R^N\backslash B_n(0)}|(-\Delta)^{s}\varphi|^{{\frac{2N}{N+2s}}}\right)^{\frac{N+2s}{2N}}=0,\label{id2}
\end{align}
where $(-\Delta)^{s}\varphi\in L^\frac{2N}{N+2s}(\R^N)$ by Lemma \ref{phi:con2}.  Then, \eqref{id1}, \eqref{id2}, and integration by parts (see \emph{e.g.} \cite[Lemma 1.5]{AJS18}) imply that, 
\begin{align*}
 \lim_{k\to\infty}\cE_{s_k}(w_{s_k},\varphi)
 &=\lim_{n\to\infty}\lim_{k\to\infty}\int_{B_n(0)}w_{s_k}(-\Delta)^{s_k}\varphi+\int_{\R^N\backslash B_n(0)}w_{s_k}(-\Delta)^{s_k}\varphi=\int_{\R^N}w(-\Delta)^{s}\varphi
 =\cE_s(w,\varphi).
\end{align*}
Therefore, by \eqref{app2},
\begin{align}\label{wwsol}
0=\lim_{k\to \infty}J_{s_k}^\prime(w_{s_k})\varphi
=\cE_s(w,\varphi)-\int_{\R^N}|w|^{\dsa{s}-2}w_{s}\varphi=J'_s(w)\varphi
\end{align}
and $w$ is a weak solution of the limit problem. 

\medskip

\emph{Step 3: Verify that}
\begin{align}\label{wne0}
w\neq 0. 
\end{align}
Assume, by contradiction, that $w=0$ and let $\varphi\in C^\infty_c(B_1(0))$; then, since $w_{s_k}$ is a weak solution and $\varphi^2 w_{s_k}\in D^{s_k}(\R^N)$ (by Lemma \ref{lem:prod_co}),
\begin{equation}\label{eq:asymp_func:3}
J_{s_k}^\prime(w_{s_k})(\varphi^2 w_{s_k})=0\quad \text{ for all }k\in\N.
\end{equation}

Then, by \eqref{wk} and Lemma~\ref{lem:bk},  
\begin{align*}
\|w_{s_k}\varphi\|^2_{s_k}\leq \frac{3}{2}\cE_{s_k}(w_{s_k},\varphi^2 w_{s_k})+o(1)\quad \text{ as } k\to\infty.
\end{align*}
Therefore, by H\"older's inequality, \eqref{tau}, \eqref{rscl}, \eqref{eq:asymp_func:3}, and the fact that $\operatorname{supp}(\varphi)\subset B_1(0)$,
\begin{align*}
\norme{w_{s_k}\varphi}_{s_k}^2 &\leq \frac{3}{2}\int_{B_1(0)}|w_{s_k}|^{\dsa{s_k}-2}|w_{s_k}\varphi|^2+o(1)\\
&\leq \frac{3}{2}\left(\int_{B_1(0)}|w_{s_k}|^{\dsa{s_k}}\right)^{\frac{\dsa{s_k}-2}{\dsa{s_k}}}\left(\int_{\R^N}|\varphi w_{s_k}|^{\dsa{s_k}}\right)^{\frac{2}{\dsa{s_k}}}+o(1)\leq \frac{3}{2}\tau^{\frac{2s_k}{N}} |\varphi w_{s_k}|^2_{\dsa{s_k}}+o(1)
\end{align*}
as $k\to\infty$.  Using \Cref{thm:sobolev} and \eqref{tau}, we have that
\begin{align} \label{eq:bound_norm_claim}
\norme{\varphi w_{s_k}}_{s_k}^2 \leq \frac{3}{2}\tau^{\frac{2s_k}{N}}\kappa_{N,s_k}^2 \norme{\varphi w_{s_k}}^2_{s_k}+o(1)
 \leq \frac{1}{2} \norme{\varphi w_{s_k}}^2_{s_k}+o(1).
\end{align}
Then, by~\eqref{eq:bound_norm_claim}, $\|\varphi w_{s_k}\|_{s_k}=o(1)$ and therefore, by Theorem \ref{thm:sobolev}, $|\varphi w_{s_k}|_{2^\star_{s_k}}=o(1)$ as $k\to\infty$ for any $\varphi\in C_c^\infty(B_1(z))$, which contradicts~\eqref{rscl}. Therefore \eqref{wne0} holds.

\medskip 

\emph{Step 4: Show that $w$ is a $\phi$-equivariant least-energy solution.}  Observe that, by \eqref{ab}, there is $c_*$ such that $c_{s_k}^\phi(\R^N)\to c_*$ as $k\to\infty$ up to a subsequence. Moreover, by \eqref{wwsol} and \eqref{wne0}, we have that $w\in\cN_s$.
By \eqref{app2} we have that $|w_k|^{2^\star_{s_k}}\to |w|^{2^\star_{s}}$ pointwisely in $\R^N$ as $k\to\infty$; then, by Fatou's Lemma,
\begin{align*}
 \left(\frac{1}{2}-\frac{1}{2_s^\star}\right)|w|^{2^\star_s}_{2^\star_s}
 \leq \liminf_{k\to\infty}\left(\frac{1}{2}-\frac{1}{2_{s_k}^\star}\right)|w_k|^{2^\star_{s_k}}_{2^\star_{s_k}} =\liminf_{k\to\infty}c_{s_k}^\phi\leq c_*,
\end{align*}
and then, by minimality,
\begin{align}
 &c_s^\phi(\R^N)
 \leq J_s(w) 
 =\left(\frac{1}{2}-\frac{1}{2_s^\star}\right)|w|_{2^\star_{s}}^{2^\star_{s}}
 \leq c_*.\label{3ubd}
\end{align}
On the other hand, by Theorem \ref{thm:existence_teo}, there is a $\phi$-equivariant least-energy solution $u_s\in \cN_s$ such that $J_s(u_s)=c_s^\phi(\R^N)$. By density, there is a sequence $(u_{s,n})_{n\in\N}\subset C^\infty_c(\R^N)$ such that $u_{s,n}\to u_s$ in $D^{s}(\R^N)$ as $n\to\infty$. Let
\begin{equation}\label{1ubd}
t_{k,n}:=\left(\frac{\|u_{s,n}\|_{s_k}^2}{|u_{s,n}|_{2^\star_{s_k}}^{2^\star_{s_k}}}\right)^{\frac{1}{2^\star_{s_k}-2}}.
\end{equation}
Then $\lim\limits_{n\to \infty}\lim\limits_{k\to\infty}t_{k,n}=1$.  Moreover,
\begin{equation}\label{2ubd}
\lim_{n\to \infty}\lim_{k\to\infty}\| u_{s,n}\|_{s_k}=\| u_{s}\|_s, \quad 
\lim_{n\to \infty}\lim_{k\to\infty}|u_{s,n}|_{2^\star_{s_k}}=|u_s|_{2_s^\star}.
\end{equation}
In particular,
\begin{align*}
\lim_{n\to \infty}\lim_{k\to\infty} J_{s_k}(t_{k,n}u_{s,n}) = J_s(u_s)=c_s^\phi.
\end{align*}
But then, using the minimality of $w_{s_k}$, \eqref{1ubd}, and \eqref{2ubd},
\begin{align*}
c_*^\phi+o(1)&=c_{s_k}^\phi=J_{s_k}(w_{s_k})\leq J_{s_k}(t_{k,n}u_{s,n})\qquad\text{ as }k\to\infty,
\end{align*}
and therefore $c_*^\phi\leq \lim\limits_{n\to \infty}\lim\limits_{k\to\infty} J_{s_k}(t_{k,n}u_{s,n}) = J_s(u_s)=c_s^\phi(\R^N)$. Together with \eqref{3ubd}, we conclude that $c_*^\phi(\R^N)=c_s^{\phi}(\R^N)$ and that $J_s(w)=c_s^\phi(\R^N)$. This establishes that $w$ is a $\phi$-equivariant least-energy solution of the limiting problem, and ends the proof.
\end{proof}

\begin{proof}[Proof of Theorem~\ref{main:thm:unbdd}]
 The first part (existence) follows from Theorem~\ref{thm:existence_teo} and the second part (convergence) follows from Theorem~\ref{thm:conv:entire}.  Observe that, due to the scaling invariance \eqref{issues}, an arbitrary function $w\in D^{s_k}_0(\R^N)^\phi$ has a rescaling $\widetilde w\in D^{s_k}_0(\R^N)^\phi$ satisfying \eqref{rscl}.
\end{proof}

\appendix

\section{Auxiliary Lemmas}

\subsection{Convergence of test functions} 

In this subsection we show a uniform bound for $|(-\Delta)^{s_k}\varphi|$ whenever $s_k\to s>0$ and $\varphi\in C^\infty_c(\R^N)$ and show the convergence of $((-\Delta)^{s_k}\varphi)_{k\in\N}$ in $L^p(\R^N)$ for any $p\geq 1$.  For $s>0$ and $k\in \N$ denote 
 \begin{equation}\label{sks-space}
 S^{n}_{t}:=\{\psi\in C^n(\R^N)\;:\; \sup_{x\in \R^N}(1+|x|^{N+2t})\sum_{|\alpha|\leq n}|\partial^{\alpha}\psi(x)| <\infty\}
 \end{equation}
 endowed with the norm $\|\psi\|_{n,t}:= \sup\limits_{x\in \R^N}(1+|x|^{N+2t})\sum\limits_{|\alpha|\leq n}|\partial^{\alpha}\psi(x)|$.
 
 The next Lemma is a version of \cite[Lemma B.5]{AJS18_green} with uniform estimates. 
\begin{lem}\label{phi:con2}
	Let $m\in \N_0$, $(\sigma_k)_{k\in\N}\subset[0,1]$, $\lim_{k\to\infty}\sigma_k=:\sigma\in[0,1]$, $s_k:=m+\sigma_k>0$, $s:=m+\sigma>0$, $\delta\in(0,s)$, and $\varphi\in C^\infty_c(\R^N)$.  There is ${C=C(N,s,\delta)>0}$ such that, passing to a subsequence,
	\begin{align}\label{bd:eq}
	|(-\Delta)^{s_k}\varphi(x)|\leq C\frac{\|\varphi\|_{2m+2,s-\delta}}{1+|x|^{N+2(s-\delta)}}\qquad \text{  for all $x\in \R^N$ and $k\in\N$}.
	\end{align}
\end{lem}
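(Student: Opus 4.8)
The plan is to strip off the integer-order part of $(-\Delta)^{s_k}$ and reduce matters to a pointwise decay estimate for $(-\Delta)^{\sigma_k}$ with $\sigma_k\in(0,1)$, adapting the proof of \cite[Lemma B.5]{AJS18_green} while keeping track of the dependence of every constant on $\sigma_k$.

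\emph{Reduction to order $\sigma_k$.} Write $s_k=m+\sigma_k$. Since $(-\Delta)^{m/2}$ (for $m$ even) and $\partial_i(-\Delta)^{(m-1)/2}$ (for $m$ odd) are Fourier multipliers with polynomial symbol, they commute with the Fourier multiplier $(-\Delta)^{\sigma_k}$, and comparing symbols the definition of $(-\Delta)^{s_k}$ collapses to $|(-\Delta)^{s_k}\varphi|=|(-\Delta)^{\sigma_k}\psi|$ with $\psi:=(-\Delta)^m\varphi\in C^\infty_c(\R^N)$. Two features of $\psi$ will be used: first, $\partial^\beta\psi$ with $|\beta|\le 2$ is a linear combination of $\partial^\alpha\varphi$ with $|\alpha|\le 2m+2$, so $\psi\in S^2_{s-\delta}$ with $\|\psi\|_{2,s-\delta}\le C(N,s)\|\varphi\|_{2m+2,s-\delta}$; second, $\widehat\psi(\xi)=|\xi|^{2m}\widehat\varphi(\xi)$ vanishes to order $2m$ at the origin, hence $\int_{\R^N}y^\alpha\psi(y)\,dy=0$ for all $|\alpha|\le 2m-1$. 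If $\sigma_k\in\{0,1\}$ for infinitely many $k$, then along that subsequence $(-\Delta)^{s_k}\varphi$ equals $\pm\psi$ or $\pm(-\Delta)\psi$ and \eqref{bd:eq} is immediate from $\psi\in S^2_{s-\delta}$; so, passing to a subsequence, I may assume $\sigma_k\in(0,1)$ for all $k$ and, since $s_k\to s>s-\delta$, that $s_k>s-\delta$ for all $k$ as well.

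\emph{The uniform decay estimate.} I would then prove $|(-\Delta)^{\sigma_k}\psi(x)|\le C(N,s,\delta)(1+|x|)^{-N-2(s-\delta)}\|\psi\|_{2,s-\delta}$, splitting $\R^N$ into $\{|x|\le 2\}$ and $\{|x|>2\}$. On $\{|x|\le 2\}$ one cuts the singular integral at $|x-y|=1$: near the diagonal one removes the vanishing first-order term and bounds the remainder by $\tfrac12\|D^2\psi\|_{L^\infty}|x-y|^2$, while away from the diagonal one uses $|\psi(x)-\psi(y)|\le|\psi(x)|+|\psi(y)|$ and $\|\psi\|_{L^1}\le C(N,s,\delta)\|\psi\|_{2,s-\delta}$; this gives a bound by $C(N,s,\delta)\|\psi\|_{2,s-\delta}$, which suffices there since the weight is bounded below on $\{|x|\le 2\}$. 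On $\{|x|>2\}$ one partitions the domain according to whether $|x-y|<1$, or $1\le|x-y|$ with $|y|\ge|x|/2$, or $|y|<|x|/2$: in the first region the second-order Taylor estimate combined with the decay of $D^2\psi$ near $x$ produces the factor $(1+|x|)^{-N-2(s-\delta)}$; in the second, $|\psi(x)|,|\psi(y)|\le C(1+|x|)^{-N-2(s-\delta)}\|\psi\|_{2,s-\delta}$ since $|x|$ and $|y|$ are comparable there; and in the third region the only delicate term $\int_{|y|<|x|/2}\psi(y)\,|x-y|^{-N-2\sigma_k}\,dy$ (the $\psi(x)$-contribution being handled as before) is treated by subtracting the degree-$(\ell-1)$ Taylor polynomial of $y\mapsto|x-y|^{-N-2\sigma_k}$ at $y=0$ for a suitably chosen $\ell\in\{0,1,\dots,2m\}$: the vanishing moments of $\psi$ annihilate this polynomial, and the remainder, of size $\le C(N,\ell)|x|^{-N-2\sigma_k-\ell}|y|^\ell$, upgrades the ``free'' decay $|x|^{-N-2\sigma_k}$ to $|x|^{-N-2(s-\delta)}$. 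The slack $\delta>0$ is exactly what makes the resulting integrals $\int|y|^\ell(1+|y|)^{-N-2(s-\delta)}\,dy$, or their truncations to $\{|y|<|x|/2\}$, behave correctly, and $s_k>s-\delta$ makes the exponents dominate $N+2(s-\delta)$; an elementary but slightly tedious case check produces an admissible $\ell$ in every configuration of $m,\sigma,\delta$.

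\emph{Uniformity and main obstacle.} The only non-uniform-looking ingredients are the radial integrals $\int_{|z|\le1}|z|^{2-N-2\sigma_k}\,dz\sim(1-\sigma_k)^{-1}$ and $\int_{|z|\ge1}|z|^{-N-2\sigma_k}\,dz\sim\sigma_k^{-1}$, both absorbed by the prefactor since, by \eqref{cnsigma}, $c_{N,\sigma_k}\le C(N)$ and $c_{N,\sigma_k}\le C(N)\sigma_k(1-\sigma_k)$ uniformly on $[0,1]$; likewise the bounds $|\partial^\gamma_z(|x-z|^{-N-2\sigma_k})|\le C(N,|\gamma|)|x|^{-N-2\sigma_k-|\gamma|}$ for $|z|<|x|/2$ are uniform in $\sigma_k\in[0,1]$. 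I expect this bookkeeping to be the main difficulty: reaching the decay order $N+2(s-\delta)$ rather than the free order $N+2\sigma_k$, with a constant independent of both $\sigma_k$ and $\operatorname{supp}\varphi$, forces the simultaneous use of the vanishing moments of $(-\Delta)^m\varphi$, the correct choice of the Taylor order $\ell$, and the double degeneracy $c_{N,\sigma_k}\to0$ as $\sigma_k\to0,1$.
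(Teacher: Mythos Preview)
Your approach is correct but takes a different route from the paper at the key step. Both proofs reduce to $\psi=(-\Delta)^m\varphi$ and split the singular integral for $(-\Delta)^{\sigma_k}\psi$ into a near-diagonal piece (handled via the Hessian of $\psi$, with the factor $(1-\sigma_k)^{-1}$ absorbed by $c_{N,\sigma_k}$) and a far piece; the divergence is in how the free decay $|x|^{-N-2\sigma_k}$ of the far piece is upgraded to $|x|^{-N-2(s-\delta)}$.

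The paper does this by \emph{integrating by parts $m$ times}: in $\int_{|y|>1}\psi(x+y)\,|y|^{-N-2\sigma_k}\,dy$ one writes $\psi=(-\Delta)^m\varphi$ and moves the Laplacians onto the kernel, obtaining $\int_{|y|>1}|\varphi(x+y)|\,|y|^{-N-2s_k}\,dy$ plus boundary terms on $\{|y|=1\}$ that already carry the decay $(1+|x|)^{-N-2(s-\delta)}$. The remaining integral inequality is then disposed of by an elementary two-region split, with no case analysis on $m,\sigma,\delta$.

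Your route via the vanishing moments of $\psi$ and Taylor expansion of the kernel is the dual manoeuvre and also works, but at the cost of the ``slightly tedious case check'' you flag: the Taylor order $\ell$ must satisfy $\ell\le 2m$, $\ell-1<2(s-\delta)$ (so that the tail corrections $\int_{|y|\ge|x|/2}|\psi(y)|\,|y|^{|\alpha|}\,dy$ converge), and $\ell+2\sigma_k>2(s-\delta)$ (so that the remainder decays fast enough). These constraints are always compatible---the interval $(2(s-\delta-\sigma_k),\,2(s-\delta)+1)$ has length exceeding $1$ and meets $[0,2m]$ thanks to $s_k>s-\delta$---but verifying this is more involved than the paper's single integration-by-parts identity. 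In short: both arguments are valid; the paper's is shorter because it trades your moment/Taylor bookkeeping for boundary terms on a fixed sphere.
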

\begin{proof}
It suffices to consider $\sigma_k\in(0,1)$. Note that $(-\Delta)^{m+\sigma_k} \varphi= (-\Delta)^{\sigma_k}(-\Delta)^{m} \varphi$ (this follows by Fourier transform, see also \cite[Theorems 1.2 and 1.9]{AJS18} for a proof via direct calculations). Let $\psi:=(-\Delta)^m \varphi$, $B:=B_1(0)$, and $\delta\in(0,s)$.  In the following $C>0$ denotes possibly different constants depending at most on $N$, $s$, and $\delta.$

By \eqref{cnsigma},
\begin{align}\label{bound_sigma_k}
c_{N,\sigma_k}\leq C\sigma_k(1-\sigma_k)\qquad \text{ for all } k\in\N.
\end{align}
For $x\in\R^N$ we have, by the mean value theorem (see \cite[Lemma B.1]{AJS18_green}),
	\begin{align}\label{A5}
	|&(-\Delta)^{\sigma_k+m} \varphi(x)|=\frac{c_{N,\sigma_k}}{2}\Bigg|\int_{\R^N} \frac{2\psi(x)-\psi(x+y)-\psi(x-y)}{|y|^{N+2\sigma_k}}\d{y}\Bigg|\notag\\
	&\leq C\sigma_k(1-\sigma_k)\left(\Bigg|\int_{B} \frac{2\psi(x)-\psi(x+y)-\psi(x-y)}{|y|^{N+2\sigma_k}}\d{y}\Bigg|+\Bigg|\int_{\R^N\backslash B} \frac{2\psi(x)-\psi(x+y)-\psi(x-y)}{|y|^{N+2\sigma_k}}\d{y}\Bigg|\right)\notag\\
	&\leq C\sigma_k(1-\sigma_k)\left(\int_{B}\int_0^1\int_0^1\frac{|H_{\psi}( x+(t-\tau)y)|}{|y|^{N+2\sigma_k-2}}\d{\tau} \d{t} \d{y}+\Bigg|\int_{\R^N\setminus B} \frac{2\psi(x)-\psi(x+y)-\psi(x-y)}{|y|^{N+2\sigma_k}}\d{y}\Bigg|\right)\notag\\
	&=:f_{1,k}+f_{2,k},
	\end{align}
	where $H_\psi$ denotes the Hessian of $\psi$. Let $t:=s-\delta$ and note that 
	\begin{align}\label{A4}
	f_{1,k}&\leq C\sigma_k(1-\sigma_k)\|\varphi\|_{2m+2,t}\int_{B}\int_0^1\int_0^1\frac{|y|^{-N-2\sigma_k+2}}{1 +|x+(t-\tau)y|^{N+2t}}\d{\tau} \d{t} \d{y}  \leq C\sigma_k\frac{\|\varphi\|_{2m+2,t}}{1+|x|^{N+2t}},\\
	f_{2,k}&\leq 2\sigma_k(1-\sigma_k)\left(\int_{\R^N\setminus B} \frac{|\psi(x)|}{|y|^{N+2\sigma_k}}\d{y}+\Bigg|\int_{\R^N\setminus B} \frac{\psi(x+y)}{|y|^{N+2\sigma_k}}\d{y}\Bigg|\right)\notag\\
	&\leq C\sigma_k(1-\sigma_k)\left(\frac{1}{\sigma_k}\frac{\|\varphi\|_{2m+2,t}}{1+|x|^{N+2t}}+\Bigg|\int_{\R^N\setminus B} \frac{\psi(x+y)}{|y|^{N+2\sigma_k}}\d{y}\Bigg|\right).\label{A3}
	\end{align}
	Using integration by parts $m$ times we obtain
	\begin{align}\label{A2}
	\Bigg |\int_{\R^N\setminus B} \frac{\psi(x+y)}{|y|^{N+2\sigma_k}}\d{y}\Bigg|=\Bigg |\int_{\R^N\setminus B} \frac{(-\Delta)^m \varphi(x+y)}{|y|^{N+2\sigma_k}}\d{y}\Bigg|\leq C\frac{\|\varphi\|_{2m+2,t}}{1+|x|^{N+2t}}+ C\int_{\R^N\setminus B} \frac{|\varphi(x+y)|}{|y|^{N+2\sigma_k+2m}}\d{y}.
	\end{align}
	Moreover,
	\begin{align}\label{A1}
	\int_{\R^N\setminus B} \frac{|\varphi(x+y)|}{|y|^{N+2\sigma_k+2m}}\d{y}\leq \frac{\|\varphi´\|_{2m+2,t}}{1+|x|^{N+2t}}\int_{\R^N\setminus B} \frac{1+|x|^{N+2t}}{(1+|x+y|^{N+2t})|y|^{N+2s_k}}\d{y}.
	\end{align}
	
	By \eqref{A5}-\eqref{A1} it suffices to show that
	\begin{align}\label{A6}
	\int_{\R^N\setminus B} \frac{1+|x|^{N+2t}}{(1+|x+y|^{N+2t})|y|^{N+2s_k}}\d{y}<C\qquad \text{for all $x\in\R^N$.}
	\end{align}
	If $|x|<2$ then \eqref{A6} follows by taking the maximum over $x\in 2B$. Fix $|x|\geq 2$ and let $U:=\{y\in\R^N\backslash B : |x+y|\geq \frac{|x|}{2}\}$. If $y\in U$ then $1+|x|^{N+2t}\leq C(1+|x+y|^{N+2t})$ and if $y\in \R^N\backslash U$ then $|y|>\frac{|x|}{2}$.  Thus,
	\begin{align*}
	&\int_{U} \frac{1+|x|^{N+2t}}{(1+|x+y|^{N+2t})|y|^{N+2s_k}}\d{y} \leq C \int_{\R^N\backslash B} |y|^{-N-2s_k}\d{y}=\frac{C}{s_k}<C,\\
	&\int_{\R^N\backslash U} \frac{1+|x|^{N+2t}}{(1+|x+y|^{N+2t})|y|^{N+2s_k}}\d{y} \leq C\frac{1+|x|^{N+2t}}{|x|^{N+2s_k}}\int_{\R^N}\frac{1}{1+|x+y|^{N+2t}}\d{y}<C.
	\end{align*}
	This implies \eqref{A6} and finishes the proof.
\end{proof}

\begin{lem}\label{phi:con}
Let $\sigma\in[0,1]$, $m\in \N_0$, $(\sigma_k)_{k\in\N}\subset[0,1]$, $\lim_{k\to\infty}\sigma_k=:\sigma\in[0,1]$, $s_k:=m+\sigma_k>0$, $s:=m+\sigma>0$, and $\varphi\in C^\infty_c(\R^N)$. Then $(-\Delta)^{s_k}\varphi \to (-\Delta)^{s}\varphi$ in $L^p(\R^N)$ as $k\to\infty$ for any $p\geq 1$.
\end{lem}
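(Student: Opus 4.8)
The plan is to combine pointwise convergence of $(-\Delta)^{s_k}\varphi$ with the uniform decay estimate of Lemma~\ref{phi:con2}, and then conclude by dominated convergence. First I would note that $L^p$-convergence of the whole sequence follows once every subsequence admits a further subsequence converging to the same limit, so one may pass to subsequences freely; along such a subsequence either $\sigma_k$ is eventually a constant $\sigma_\ast\in\{0,1\}$ --- in which case $s_k$ is a fixed integer, $s=m+\sigma_\ast$, and $(-\Delta)^{s_k}\varphi=(-\Delta)^{s}\varphi$ identically, so there is nothing to prove --- or $\sigma_k\in(0,1)$ for all $k$, which I assume from now on. Writing $\psi:=(-\Delta)^m\varphi\in C_c^\infty(\R^N)$ (a local operator applied to $\varphi$), the Fourier characterization of the operator gives $(-\Delta)^{s_k}\varphi=(-\Delta)^{\sigma_k}\psi$ and $(-\Delta)^{s}\varphi=(-\Delta)^{\sigma}\psi$, so it suffices to show $(-\Delta)^{\sigma_k}\psi\to(-\Delta)^{\sigma}\psi$ in $L^p(\R^N)$.

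For the pointwise (in fact uniform) convergence I would use that $\psi$ is Schwartz and $\widehat{(-\Delta)^{\sigma_k}\psi}(\xi)=|\xi|^{2\sigma_k}\widehat\psi(\xi)$. For each $\xi$ one has $|\xi|^{2\sigma_k}\widehat\psi(\xi)\to|\xi|^{2\sigma}\widehat\psi(\xi)$, while $|\xi|^{2\sigma_k}|\widehat\psi(\xi)|\le(1+|\xi|^2)|\widehat\psi(\xi)|\in L^1(\R^N)$ for all $k$ (splitting into $|\xi|\le1$ and $|\xi|>1$ and using $\sigma_k\le1$); hence $|\xi|^{2\sigma_k}\widehat\psi\to|\xi|^{2\sigma}\widehat\psi$ in $L^1(\R^N)$ by dominated convergence, and since $\|h\|_{L^\infty}\le C\|\widehat h\|_{L^1}$,
\[
\|(-\Delta)^{\sigma_k}\psi-(-\Delta)^{\sigma}\psi\|_{L^\infty(\R^N)}\le C\big\|(|\xi|^{2\sigma_k}-|\xi|^{2\sigma})\widehat\psi\big\|_{L^1(\R^N)}\to 0\quad\text{as }k\to\infty.
\]
In particular $(-\Delta)^{s_k}\varphi(x)\to(-\Delta)^{s}\varphi(x)$ for every $x\in\R^N$. (Alternatively, one may argue from the singular-integral representation by splitting the $y$-integral over $B_1(0)$ and its complement with a second-order Taylor estimate, as in the proof of Lemma~\ref{phi:con2}.)

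Next I would fix $\delta\in(0,s)$ and invoke Lemma~\ref{phi:con2}: after passing to a further subsequence there is $C=C(N,s,\delta)>0$ with $|(-\Delta)^{s_k}\varphi(x)|\le C\,\|\varphi\|_{2m+2,\,s-\delta}\,\big(1+|x|^{N+2(s-\delta)}\big)^{-1}$ for all $x\in\R^N$ and $k\in\N$, and the limit $(-\Delta)^{s}\varphi$ obeys the same bound (apply the lemma to the constant sequence if $\sigma\in(0,1)$; it is immediate when $(-\Delta)^{s}\varphi\in C_c^\infty(\R^N)$, i.e.\ $\sigma\in\{0,1\}$; see also \cite[Lemma~B.5]{AJS18_green}). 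Since $p\ge1$ and $s-\delta>0$ we have $p(N+2(s-\delta))>N$, so $x\mapsto\big(1+|x|^{N+2(s-\delta)}\big)^{-p}$ lies in $L^1(\R^N)$; therefore
\[
\big|(-\Delta)^{s_k}\varphi(x)-(-\Delta)^{s}\varphi(x)\big|^{p}\le 2^{p+1}C^{p}\|\varphi\|_{2m+2,\,s-\delta}^{p}\big(1+|x|^{N+2(s-\delta)}\big)^{-p}\in L^1(\R^N),
\]
and the left-hand side tends to $0$ pointwise. The dominated convergence theorem then yields $\|(-\Delta)^{s_k}\varphi-(-\Delta)^{s}\varphi\|_{L^p(\R^N)}\to0$ along the subsequence, hence along the full sequence, for every $p\ge1$.

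The only real obstacle is the one already resolved by Lemma~\ref{phi:con2}: producing a dominating function that is simultaneously uniform in $k$ and integrable to the $p$-th power, despite the kernel $|y|^{-N-2\sigma_k}$ degenerating at the origin when $\sigma_k\to1$ and at infinity when $\sigma_k\to0$; note that $(-\Delta)^{s_k}\varphi$ decays only like $|x|^{-N-2s}$, so the loss encoded by $\delta>0$ is unavoidable. With that estimate in hand the rest is the routine dominated-convergence scheme above, the only bookkeeping being the endpoint cases $\sigma\in\{0,1\}$, handled by the initial subsequence reduction.
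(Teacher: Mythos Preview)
Your argument is correct and follows essentially the same route as the paper: pointwise convergence via the Fourier side, the uniform decay bound of Lemma~\ref{phi:con2} as a dominating function, and dominated convergence to conclude $L^p$-convergence. The paper is terser (it cites analyticity of $t\mapsto(-\Delta)^t\varphi(x)$ for the pointwise step and skips your subsequence bookkeeping), but the scheme is identical.
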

\begin{proof}
 That $(-\Delta)^{s_k}\varphi\to (-\Delta)^{s}\varphi$ pointwisely in $\R^N$ as $k\to\infty$ follows using Fourier transform.  In fact, the function $t\mapsto (-\Delta)^{t}\varphi(x)$ is analytic in $(0,\infty)$ for every $x\in\R^N$, see \cite[Lemma 2]{DG17}.  Let $\delta\in(0,s)$ and $p\geq 1$. By Lemma \ref{phi:con2}, there is $C=C(N,s,\delta,\varphi)>0$ such that, for $k$ large enough,
 $|(-\Delta)^{s_k}\varphi-(-\Delta)^{s}\varphi|^p
  \leq C(1+|x|^{N+2(s-\delta)})^{-p}\in L^1(\R^N)$.  The claim now follows by dominated convergence. 
 \end{proof}

\subsection{Uniform bounds} 

The goal of this subsection is to show two auxiliary results employed in the proof of \Cref{thm:conv:entire}.

 \begin{lem}\label{lem:prod_co}
Let $s>0$, $w\in D^{s}(\R^N)$, and $\eta\in C^\infty_c(\R^N)$, then $w\eta\in D^s(\R^N)$ and $\norme{w\eta}_{s}\leq |\hat \eta|_1^2\|w\|_s.$
\end{lem}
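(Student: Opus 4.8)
The plan is to estimate $\norme{w\eta}_s^2 = \cE_s(w\eta, w\eta) = \int_{\R^N} |\xi|^{2s} |\widehat{w\eta}(\xi)|^2 \d\xi$ by controlling the Fourier transform of the product $w\eta$. The key tool is the convolution theorem: $\widehat{w\eta} = \widehat w * \widehat \eta$ (up to normalization constants, which I will absorb/track carefully). So the pointwise bound $|\widehat{w\eta}(\xi)| \le \int_{\R^N} |\widehat w(\xi - \zeta)| \, |\widehat\eta(\zeta)| \d\zeta$ is the starting point. Since $\eta \in C^\infty_c(\R^N)$, its Fourier transform $\widehat\eta$ is a Schwartz function, in particular $\widehat\eta \in L^1(\R^N)$, so $|\widehat\eta|_1 < \infty$ is finite and this is a legitimate quantity to appear on the right-hand side.

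The main analytic step is to handle the weight $|\xi|^{2s}$. First I would use the elementary inequality $|\xi|^{s} \le C_s\big(|\xi - \zeta|^{s} + |\zeta|^{s}\big)$ — or more cleanly, since we ultimately want the constant $|\hat\eta|_1^2$ with no extra factor, I would instead exploit that $\widehat\eta$ is rapidly decaying and rely on a weighted Young-type convolution inequality. Concretely, the cleanest route: write $|\xi|^{2s}|\widehat{w\eta}(\xi)|^2 \le \big(\int |\xi|^s |\widehat w(\xi-\zeta)||\widehat\eta(\zeta)|\d\zeta\big)^2$ is not quite right because the weight sits outside; so instead I bound $|\xi|^s |\widehat w(\xi-\zeta)| \le (|\xi-\zeta|^s + |\zeta|^s)|\widehat w(\xi - \zeta)|$ only if I also move weights onto $\eta$. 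Given the target bound has no derivative of $\eta$, I suspect the intended argument is the slicker one: translation does not change the homogeneous seminorm in the sense that for fixed $\zeta$, $\|e^{i\zeta\cdot(\cdot)} w\|_s$ is \emph{not} equal to $\|w\|_s$ — rather one should write $w\eta = \int \widehat\eta(\zeta)\, e_\zeta\, w \,\d\zeta$ where $e_\zeta(x) := (2\pi)^{-N/2} e^{i\zeta\cdot x}$ (using the appropriate normalization so that $\widehat{e_\zeta w}(\xi) = \widehat w(\xi - \zeta)$), apply Minkowski's integral inequality in the Hilbert space $D^s(\R^N)$ to get $\norme{w\eta}_s \le \int |\widehat\eta(\zeta)| \norme{e_\zeta w}_s \d\zeta$, and then observe $\norme{e_\zeta w}_s^2 = \int |\xi|^{2s}|\widehat w(\xi - \zeta)|^2\d\xi = \int |\xi + \zeta|^{2s}|\widehat w(\xi)|^2 \d\xi$. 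This last quantity is \emph{not} bounded by $\norme{w}_s^2$ in general, so a further argument is needed — and that is where the factor $|\hat\eta|_1^2$ rather than $|\hat\eta|_1$ must come from: presumably one iterates or uses that $|\xi + \zeta|^{2s} \le$ something controllable after a second convolution.

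The honest resolution, which I expect the authors use, is: bound $|\xi|^{2s}|\widehat{w\eta}(\xi)|^2$ using Cauchy–Schwarz on the convolution as $\big(\int |\widehat\eta(\zeta)|\d\zeta\big)\big(\int |\widehat\eta(\zeta)| |\xi|^{2s}|\widehat w(\xi - \zeta)|^2 \d\zeta\big)$, i.e. $|\xi|^{2s}|\widehat{w\eta}(\xi)|^2 \le |\hat\eta|_1 \int |\widehat\eta(\zeta)||\xi|^{2s}|\widehat w(\xi-\zeta)|^2\d\zeta$, then integrate in $\xi$, apply Fubini, and in the inner integral substitute $\xi \mapsto \xi + \zeta$ to get $\int |\xi+\zeta|^{2s}|\widehat w(\xi)|^2\d\xi$. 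To close I would need $|\xi + \zeta|^{2s} \le |\xi|^{2s}$, which is false. So in fact the statement as written, with the clean constant $|\hat\eta|_1^2$ and \emph{no} derivative of $\eta$, can only be correct if one uses that the relevant estimate is really over a \emph{bounded} frequency region or uses the inhomogeneous norm — and indeed, examining the excerpt's use of this lemma (in proving $\|w^n_{s_k}\|_{s_k} < C$ together with Lemma \ref{GV}), I believe the missing ingredient is simply monotonicity is not needed because $w \in D^s$ already and one only needs $w\eta \in D^s$ with \emph{some} finite norm; the precise constant $|\hat\eta|_1^2$ then follows from the Cauchy–Schwarz computation above \emph{provided} $s \le$ something, or is stated loosely. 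The hard part, and the thing I would pin down first, is exactly this weight manipulation: getting from $\int |\xi+\zeta|^{2s}|\widehat w(\xi)|^2 \d\xi$ back to a multiple of $\norme{w}_s^2 = \int|\xi|^{2s}|\widehat w(\xi)|^2\d\xi$. I would resolve it by noting that for the purpose of the lemma one may assume $w \in C^\infty_c$ by density, reduce to $\eta$ with $\widehat\eta \ge 0$ WLOG is false, and instead present the argument cleanly as: $\norme{w\eta}_s = \norme{|\xi|^s(\widehat w * \widehat\eta)}_{L^2_\xi} \le \norme{(|\xi|^s|\widehat w|) * |\widehat\eta|}_{L^2} + (\text{commutator terms with } \eta\text{'s decay})$, and absorb everything into $|\hat\eta|_1^2$ using Young's inequality $\|f * g\|_{L^2} \le \|f\|_{L^2}\|g\|_{L^1}$, which gives exactly one factor $|\hat\eta|_1$, and a second factor appears from a frequency-localization / dyadic splitting step. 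I would write out the Young's inequality core — $\norme{w\eta}_s \le |\hat\eta|_1 \cdot \big(\sup$-type bound$\big)$ — as the backbone, flag the weight-shift as the delicate point, and keep the rest routine.
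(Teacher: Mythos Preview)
Your hesitation is well founded: the obstruction you keep circling is real, and the paper's own proof of this lemma is in error at precisely that step. The paper computes
\begin{align*}
\norme{w\eta}_{s}^2
 =\int_{\R^N}|\xi|^{2s}
 \left|\int_{\R^N}\widehat {w}(\xi-y)\,\widehat{\eta}(y)\d{y}\right|^2\d{\xi}
 =\int_{\R^N}|\xi|^{2s}
 \left|\int_{\R^N}\widehat {w}(\xi)\,e^{i\xi\cdot y}\widehat{\eta}(y)\d{y}\right|^2\d{\xi}
 \leq |\hat \eta|_1^2\,\|w\|^2_{s},
\end{align*}
but the middle equality is false: $\widehat w(\xi-y)\neq\widehat w(\xi)e^{i\xi\cdot y}$. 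This is exactly the ``weight--shift'' you could not close; after the legitimate substitution one is left with $\int|\xi+\zeta|^{2s}|\widehat w(\xi)|^2\d\xi$, and there is no uniform-in-$\zeta$ bound of this by $\|w\|_s^2$.

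Indeed the stated inequality is false with that constant. Fix $w$ with $\widehat w$ a smooth bump at the origin and let $\eta_R(x):=\cos(Rx_1)\,\eta_0(x)$ for a fixed real $\eta_0\in C^\infty_c(\R^N)$. Then $|\widehat{\eta_R}|_1\le 2|\widehat{\eta_0}|_1$ is independent of $R$, while $\widehat{w\eta_R}$ concentrates near $|\xi|\approx R$, so $\|w\eta_R\|_s\sim R^s\to\infty$ whereas $|\widehat{\eta_R}|_1\|w\|_s$ stays bounded.

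What the paper actually needs is only that multiplication by a fixed $\eta\in C^\infty_c(\R^N)$ maps $D^s(\R^N)$ boundedly to itself with a constant locally uniform in $s$; that is true, but the constant must depend on derivatives of $\eta$ and on the size of its support, not merely on $|\hat\eta|_1$. The paper itself essentially proves this correct version, for the specific cutoffs $\varphi_n=1-\zeta_n$, via the real-space splitting in Lemma~\ref{lem:est_gen} (Leibniz rule, region decomposition, H\"older, and Sobolev embedding $D^t\hookrightarrow L^{2^\star_t}$), and that argument adapts to any $\eta\in C^\infty_c$. So your diagnosis is correct: there is no one-line Fourier argument delivering the constant $|\hat\eta|_1$, and the route that works is the real-space computation you were reaching toward at the end.
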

\begin{proof}
Let $\eta\in C^\infty_c(\R^N)$. Then, by the convolution theorem,
\begin{align*}
\norme{w\eta}_{s}^2
 &=\int_{\R^N}|\xi|^{2s}|\widehat {w\eta}|^2\d{\xi}
 =\int_{\R^N}|\xi|^{2s}|\widehat {w}*\widehat{\eta}(\xi)|^2\d{\xi}
 =\int_{\R^N}|\xi|^{2s}
 \left|\int_{\R^N}\widehat {w}(\xi-y)\widehat{\eta}(y)\d{y}\right|^2\d{\xi}\\
 &=\int_{\R^N}|\xi|^{2s}
 \left|\int_{\R^N}\widehat {w}(\xi)e^{i\xi\cdot y}\widehat{\eta}(y)\d{y}\right|^2\d{\xi}
 \leq \int_{\R^N}|\xi|^{2s}|\widehat {w}(\xi)|^2
 \left|\int_{\R^N}|\widehat{\eta}(y)|\d{y}\right|^2\d{\xi}
 =|\hat \eta|_1^2\|w\|^2_{s}.\qedhere
 \end{align*}
\end{proof}

 \begin{lem}\label{GV}
Let $m\in\mathbb N_0$, $(\sigma_k)_{k\in\N}\subset[0,1]$, and $\lim_{k\to\infty}\sigma_k=:\sigma\in[0,1]$.  For $k\in \N$, let $s_k:=m+\sigma_k>0$, $s:=m+\sigma>0$, and $w_{k}\in D^{s_k}(\R^N)$.  If
\begin{align}\label{bd:a}
\|w_k\|_{s_k}<C_1\qquad \text{ for all $k\in\N$ and for some $C_1>0$},
\end{align}
then $\norme{\zeta_nw_k - w_k}_{s_k} \leq C$ for $\zeta_n$ as in \eqref{zetan} and some $C>0$ uniform with respect to $n$ and $k$.

\end{lem}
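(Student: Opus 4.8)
The plan is to reduce everything to Lemma~\ref{lem:prod_co} together with one scaling observation. First I would use the triangle inequality to note that it suffices to bound $\|\zeta_n w_k\|_{s_k}$ uniformly in $n$ and $k$, since \eqref{bd:a} already gives $\|w_k\|_{s_k}<C_1$ and then $\|\zeta_n w_k-w_k\|_{s_k}\le\|\zeta_n w_k\|_{s_k}+\|w_k\|_{s_k}$. The key point is that $\zeta_n$ is the dilation of a fixed compactly supported function: setting $\widetilde\zeta(x):=\zeta(|x|)$ one has $\widetilde\zeta\in C^\infty_c(\R^N)$ and $\zeta_n=\widetilde\zeta(\cdot/n)$, hence $\widehat{\zeta_n}(\xi)=n^N\widehat{\widetilde\zeta}(n\xi)$. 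Since $\widehat{\widetilde\zeta}$ is a Schwartz function, $A:=|\widehat{\widetilde\zeta}|_1<\infty$, and the substitution $\eta=n\xi$ gives $|\widehat{\zeta_n}|_1=A$ for every $n\in\N$; that is, the quantity that governs the bound in Lemma~\ref{lem:prod_co} does not depend on $n$.

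With this in hand I would apply Lemma~\ref{lem:prod_co} with $s=s_k$, $w=w_k\in D^{s_k}(\R^N)$, and $\eta=\zeta_n$, obtaining $\zeta_n w_k\in D^{s_k}(\R^N)$ with $\|\zeta_n w_k\|_{s_k}\le|\widehat{\zeta_n}|_1^2\|w_k\|_{s_k}=A^2\|w_k\|_{s_k}\le A^2C_1$ for all $n,k\in\N$. Combined with the first reduction, this yields $\|\zeta_n w_k-w_k\|_{s_k}\le (A^2+1)C_1$ for all $n,k$, which is the claim with $C:=(A^2+1)C_1$. The uniformity in $k$ comes for free, since the constant in Lemma~\ref{lem:prod_co} is purely geometric (it involves $\widehat{\zeta_n}$ only, not $s_k$), and the hypothesis $\|w_k\|_{s_k}<C_1$ is assumed uniform.

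There is no real obstacle here; the only subtle point is the uniformity in $n$, which at first glance looks threatening because $\operatorname{supp}\zeta_n$ swells as $n\to\infty$. This is resolved precisely by the dilation invariance of $\xi\mapsto|\widehat{\zeta_n}|_1$ noted above (equivalently $|\widehat{\zeta_n}|_1=|\widehat{\zeta_1}|_1$), which makes the bound from Lemma~\ref{lem:prod_co} scale-robust. An alternative, more computational route would be to peel off the integer order $m$, write the Gagliardo difference quotient for $\zeta_n w_k-w_k$ and split it as $\zeta_n(x)\big(w_k(x)-w_k(y)\big)+w_k(y)\big(\zeta_n(x)-\zeta_n(y)\big)$, controlling the first piece by $\|w_k\|_{s_k}^2$ and the second via $\|\nabla\zeta_n\|_\infty\le Cn^{-1}$ and the uniform Sobolev bound on $w_k$ in $L^{2^\star_{s_k}}$; but the Fourier/scaling argument is shorter and I would use that.
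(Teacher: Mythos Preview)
Your proof is correct and takes a genuinely different, much shorter route than the paper's. The paper writes $\zeta_n w_k-w_k=-\varphi_n w_k$ with $\varphi_n:=1-\zeta_n$, expands $(-\Delta)^{m/2}(\varphi_n w_k)$ via the Leibniz rule, and then invokes the technical Lemma~\ref{lem:est_gen}, which bounds each mixed term $\|\partial^\alpha w_k\,\partial^\beta\varphi_n\|_{\sigma_k}$ through a six-region decomposition of the Gagliardo double integral combined with Sobolev embedding and the decay $|\partial^\beta\varphi_n|\le Cn^{-|\beta|-1}$; this is essentially an elaborated version of the ``alternative, more computational route'' you sketch at the end. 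You instead reduce everything to Lemma~\ref{lem:prod_co} together with the dilation identity $|\widehat{\zeta_n}|_1=|\widehat{\widetilde\zeta}|_1$: since the bound in Lemma~\ref{lem:prod_co} involves only $|\widehat\eta|_1$ and not $s$, uniformity in both $n$ and $k$ follows at once. Your approach is more conceptual and economical; the paper's is more self-contained (it does not rely on Lemma~\ref{lem:prod_co}) and produces finer intermediate estimates on the individual Leibniz pieces, though those are not used elsewhere. One small caveat worth keeping in mind: your argument leans entirely on Lemma~\ref{lem:prod_co}, so its validity is only as solid as that lemma; the scaling observation you make is in any case robust, since it shows that the operator norm of multiplication by $\zeta_n$ on $D^{s}(\R^N)$ equals that of multiplication by $\widetilde\zeta$, independently of the specific form of the constant.
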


Lemma \ref{GV} follows essentially from \Cref{lem:est_gen} below. We argue as in \cite[Proposition B.1]{BGV18}, which shows that $\zeta_n w\to w$ as $n\to\infty$ in $D^s(\R^N)$ for $s\in(0,1)$. When considering $w_k\in D^{s_k}(\R^N)$ instead of $w$, showing convergence is much more delicate and requires additional assumptions.  Here, we only show a uniform bound, which suffices for our purposes.

 Let $B_r(0)=B_r$ denote the ball in $\R^N$ of radius $r>0$ centered at zero. Let $\zeta_n$ be as in \eqref{zetan} and $\varphi_n=:1-\zeta_n$. Clearly,
\begin{equation}\label{eq:prop_phi_n}
\varphi_n=0 \mbox{ in $B_n$}, \qquad \varphi_n= 1 \mbox{ in $\R^N\setminus B_{2n}$}, \qquad \sup_{x,y\in\R^N}\frac{|\partial^\beta\varphi_n(x)-\partial^\beta\varphi_n(y)|}{|x-y|} \leq \frac{C}{n^{|\beta|+1}},
\end{equation}
for some constant $C>0$ depending on $\zeta$ (given in \eqref{zeta}) and a multi-index $\beta\in\N_0^N$.

\begin{lem}\label{lem:est_gen}
Let $m\in\mathbb N_0$, $\sigma_k\subset[0,1]$, $\lim_{k\to\infty}\sigma_k=:\sigma\in[0,1]$, and consider multi-indices $\alpha,\beta\in\mathbb N_0^N$ such that $|\alpha|+|\beta|=m$. For $k\in\N$, let $s_k:= m+\sigma_k>0$, $s:=m+\sigma>0,$ and $w_{k}\in D^{s_k}(\R^N)$ such that \eqref{bd:a} holds, then there exists $C>0$ depending at most on $\zeta$ and $N$ such that
\begin{equation}\label{eq:bound_gen_lem}
\norme{\partial^\alpha w_k \partial^\beta \varphi_n }_{\sigma_k} \leq C\qquad \text{ for all }n,k\in\N.
\end{equation}
\end{lem}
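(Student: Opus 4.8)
The plan is to split according to whether $\beta=0$ and to work throughout with $\sigma_k\in(0,1)$; the endpoint values $\sigma_k\in\{0,1\}$, where $\norme{\cdot}_{\sigma_k}$ is the $L^2$-norm, respectively the seminorm $u\mapsto|\nabla u|_2$, are covered by the same (and simpler) computations. The basic observation to record first is that $f:=\partial^\alpha w_k$ belongs to $D^{s_k-|\alpha|}(\R^N)$ with $\norme{f}_{s_k-|\alpha|}\le\norme{w_k}_{s_k}<C_1$, since $|\xi^\alpha|\le|\xi|^{|\alpha|}$ on the Fourier side; likewise, when $|\alpha|\le m-1$, the weak gradient $\nabla\partial^\alpha w_k$ is bounded in $D^{s_k-|\alpha|-1}(\R^N)$ (note $s_k-|\alpha|-1\ge\sigma_k>0$ in that case). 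By the fractional Sobolev inequality (\Cref{thm:sobolev}) these give $L^{\dsa{s_k-|\alpha|}}$- and $L^{\dsa{s_k-|\alpha|-1}}$-bounds on $f$ and $\nabla f$ that are uniform in $k$: uniformity holds because $\sup_k s_k<\tfrac N2$, so the Sobolev constants $\kappa_{N,t}$ stay bounded along the finitely many convergent sequences of exponents $t$ that occur.

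If $\beta=0$, then $|\alpha|=m$, so $f\in D^{\sigma_k}(\R^N)$ with $\norme{f}_{\sigma_k}\le\norme{w_k}_{s_k}$, and $\partial^\beta\varphi_n=\varphi_n=1-\zeta_n$. I would simply write $f\varphi_n=f-\zeta_n f$ and apply \Cref{lem:prod_co} to $\zeta_n f$: since $\zeta_n(x)=\zeta_0(x/n)$ is a dilation of the fixed function $\zeta_0(x):=\zeta(|x|)\in C^\infty_c(\R^N)$, its Fourier mass $|\widehat{\zeta_n}|_1=|\widehat{\zeta_0}|_1$ is independent of $n$, whence $\norme{\zeta_n f}_{\sigma_k}\le|\widehat{\zeta_0}|_1^2\norme{f}_{\sigma_k}$; by the triangle inequality for the seminorm $\norme{\cdot}_{\sigma_k}$ we get $\norme{f\varphi_n}_{\sigma_k}\le(1+|\widehat{\zeta_0}|_1^2)\norme{w_k}_{s_k}$. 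This case is essentially immediate.

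The core is the case $\beta\ne0$, where $|\alpha|\le m-1$ and $g:=\partial^\beta\varphi_n=-\partial^\beta\zeta_n$ is now \emph{compactly supported} in an annular region $K_n\subset B_{2n}\setminus B_n$ of volume at most $Cn^N$, with $\norme{g}_\infty\le Cn^{-|\beta|}$ and $\norme{\nabla g}_\infty\le Cn^{-|\beta|-1}$ by \eqref{eq:prop_phi_n} and the scaling $\zeta_n(x)=\zeta_0(x/n)$. Then $h_n:=fg$ is compactly supported and, because $f$ has a weak gradient that is locally in $L^{\dsa{s_k-|\alpha|-1}}$, one has $h_n\in H^1(\R^N)$. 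Combining H\"older's inequality over $K_n$ with the Sobolev bounds above and the identity $|\alpha|+|\beta|=m$, I would estimate
\begin{equation*}
|h_n|_2^2\le\norme{g}_\infty^2\!\int_{K_n}\!|f|^2\le Cn^{-2|\beta|}\,n^{2(s_k-|\alpha|)}=Cn^{2\sigma_k},\qquad
|\nabla h_n|_2^2\le C\bigl(n^{-2|\beta|}n^{2(s_k-|\alpha|-1)}+n^{-2|\beta|-2}n^{2(s_k-|\alpha|)}\bigr)=Cn^{2\sigma_k-2}.
\end{equation*}
Finally, the interpolation inequality $\norme{u}_{\sigma_k}\le|u|_2^{1-\sigma_k}\,|\nabla u|_2^{\sigma_k}$ (H\"older in $\xi$ applied to $|\xi|^{2\sigma_k}=(|\xi|^2)^{\sigma_k}\cdot 1^{1-\sigma_k}$, valid since $h_n\in H^1(\R^N)$) yields $\norme{h_n}_{\sigma_k}^2\le C(n^{2\sigma_k-2})^{\sigma_k}(n^{2\sigma_k})^{1-\sigma_k}=Cn^{0}$, which is \eqref{eq:bound_gen_lem}.

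I expect the main obstacle to be precisely this power counting for $\beta\ne0$. A naive decomposition of the Gagliardo double integral of $h_n$ through $|(fg)(x)-(fg)(y)|\le|g(x)|\,|f(x)-f(y)|+|f(y)|\,|g(x)-g(y)|$ is hopeless: it effectively bounds $\norme{h_n}_{\sigma_k}$ by $|h_n|_2\sim n^{\sigma_k}$, which diverges, so one must exploit the cancellation between the two pieces — the interpolation inequality (equivalently, a decomposition of the double integral in dyadic scales of $|x-y|$, using the $H^1$ bound on $h_n$ for $|x-y|\lesssim n$ and the $L^2$ bound for $|x-y|\gtrsim n$, together with $c_{N,\sigma_k}\le C\sigma_k(1-\sigma_k)$) is what captures it, and the exponents of $n$ only cancel because $|\alpha|+|\beta|=m$. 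The other delicate point, which permeates the whole argument, is keeping every constant — the Sobolev constants $\kappa_{N,t}$, the bound on $c_{N,\sigma_k}$, and $|\widehat{\zeta_0}|_1$ — uniform in $k$ as $\sigma_k$ may approach $0$ or $1$; this is where the hypotheses $s_k\to s$ and $\sup_k s_k<\tfrac N2$ enter.
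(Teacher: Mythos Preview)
Your proof is correct and takes a genuinely different route from the paper's. The paper splits the Gagliardo double integral of $\partial^\alpha w_k\,\partial^\beta\varphi_n$ into six pieces $J_1,\dots,J_6$ according to whether $x,y$ lie in $B_n$, $B_{2n}\setminus B_n$, or $\R^N\setminus B_{2n}$, and estimates each region by hand via \eqref{eq:prop_phi_n}, H\"older, and the Sobolev bound $|\partial^\alpha w_k|_{\dsa{s_k-|\alpha|}}\le C$. Your argument bypasses this case analysis entirely: for $\beta=0$ you invoke \Cref{lem:prod_co} and the scale invariance $|\widehat{\zeta_n}|_1=|\widehat{\zeta_0}|_1$ to get the bound in one line; for $\beta\ne0$ you observe that $h_n=\partial^\alpha w_k\,\partial^\beta\varphi_n$ is compactly supported in $K_n$ with controlled $L^2$ and $H^1$ norms, and the Fourier-side interpolation $\|h_n\|_{\sigma_k}\le|h_n|_2^{1-\sigma_k}|\nabla h_n|_2^{\sigma_k}$ makes the powers of $n$ cancel exactly because $|\alpha|+|\beta|=m$. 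What your approach buys is brevity and a transparent explanation of why the bound is uniform in $n$ --- the interpolation is doing implicitly what the paper's dyadic-in-$|x-y|$ region splitting does explicitly. What the paper's approach buys is that it never passes through $H^1$, so it does not require the extra Sobolev bound on $\nabla\partial^\alpha w_k$; it works directly at the level of the $\sigma_k$-seminorm and the single exponent $\dsa{s_k-|\alpha|}$.
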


\begin{proof} The local case $s_k\in\N$ is clear, so we may assume that {$\sigma_k\in (0,1)$} for all $k\in\mathbb N$. We begin by splitting in suitable subdomains, more precisely,
\begin{align*}
\norme{\partial^\alpha w_{k} \partial^\beta \varphi_n}_\sigma
&= \frac{c_{N,\sigma_k}}{2}\int_{\R^N}\int_{\R^N}  \frac{|\partial^\alpha w_{k}(x) \partial^\beta\varphi_n(x)-\partial^\alpha w_{k}(y) \partial^\beta\varphi_n(y)|^2}{|x-y|^{N+2\sigma_k }} \d{x}\d{y} \\
&= {c_{N,\sigma_k}} \int_{B_{2n}\setminus B_n}\int_{B_n} \cdots
+ {c_{N,\sigma_k}} \int_{\R^N\setminus B_{2n}} \int_{B_n}\cdots
+ \frac{c_{N,\sigma_k}}{2} \int_{B_{2n}\setminus B_n} \int_{B_{2n}\setminus B_n} \cdots\\
&\quad+ {c_{N,\sigma_k}} \int_{\R^N\setminus B_{2n}} \int_{B_{2n}\setminus B_n}  \cdots
+ \frac{c_{N,\sigma_k}}{2} \int_{\R^N\setminus B_{2n}} \int_{\R^N\setminus B_{2n}}\cdots
+ \frac{c_{N,\sigma_k}}{2} \int_{B_{n}} \int_{B_n}\cdots
=: \sum_{i=1}^{6} J_i.
\end{align*}

We show that each $J_i$ is uniformly bounded in $k$ and $n$ for $1\leq i\leq 6$.  
In the following, $C>0$ denotes possibly different constants depending at most on $N$, $m$, $\sigma$, and $\zeta$ (given in \eqref{zeta}).

\medskip

\emph{-Estimate for $J_1$}. By \eqref{eq:prop_phi_n},
\begin{align*}
J_1
&= {c_{N,\sigma_k}} \int_{B_{2n}\setminus B_n}\int_{B_n} \frac{|\partial^\alpha w_{k}(y) \partial^\beta \varphi_n(y)|^2}{|x-y|^{N+2\sigma_k }} \d{x}\d{y}\\
&=  {c_{N,\sigma_k}} \int_{B_{2n}\setminus B_n}\int_{B_n} \frac{|\partial^\beta\varphi_n(x)-\partial^\beta\varphi_n(y)|^2}{|x-y|^{N+2\sigma_k }} |\partial^\alpha w_{k}(y) |^2 \d{x}\d{y}\\
&\leq \frac{Cc_{N,\sigma_k}}{n^{2(|\beta|+1)}}  \int_{B_{2n}\setminus B_n}\int_{B_n} \frac{|\partial^\alpha w_{k}(y)|^2}{|x-y|^{N-2(1-\sigma_k)}}\d{x}\d{y}.
\end{align*}
Note that $B_n\subset B_{3n}(y)$ for $y\in B_{2n}\setminus B_n$ and hence
\begin{align*}
\int_{B_n} \frac{1}{|x-y|^{N-2(1-\sigma_k)}}\dx \leq \int_{B_{3n}(y)}  \frac{1}{|x-y|^{N-2(1-\sigma_k)}}\dx \leq \frac{C}{1-\sigma_k} n^{2(1-\sigma_k)}.
\end{align*}
Thus, 
\begin{equation}\label{J11}
J_{1}\leq \frac{c_{N,\sigma_k}}{1-\sigma_k} \frac{C}{n^{2(|\beta|+\sigma_k)}}\int_{B_{2n}\setminus B_n} |\partial^\alpha w_{k}(y)|^2 \d{y}.
\end{equation}

By Fourier transform and the assumption \eqref{bd:a},
\begin{align}\label{eq:four_equiv}
 \|\partial^\alpha w_{k}\|^2_{s_k-|\alpha|}
 =\int_{\R^N} |\xi|^{2(s_k-|\alpha|)}|\widehat{\partial^\alpha w_k}|^2\d{\xi}
 = \int_{\R^N} |\xi|^{2s_k}|\widehat w_k|^2\d{\xi} =\|w_{k}\|^2_{s_k}<C\quad \text{ for all }k\in\N,
\end{align}
Then, by \Cref{thm:sobolev}, $|\partial^\alpha w_{k}(y)|_{\dsa{s_k-|\alpha|}}<C$ for all $k\in\N$ and, using H\"older inequality,  \eqref{cnsigma}, and \eqref{eq:four_equiv},
\begin{align*}
J_1 &\leq \frac{C}{n^{2(|\beta|+\sigma_k)}} \left(\int_{B_{2n}\setminus B_n} |\partial^\alpha w_{k}(y)|^{\dsa{s_k-|\alpha|}}\right)^{\frac{2}{\dsa{s_k-\alpha}}} \left(Cn^{N}\right)^{\frac{2(s_k-|\alpha|)}{N}}
\leq {C} |\partial^\alpha w_{k}(y)|_{\dsa{s_k-|\alpha|}}^{2}<C
\end{align*}
for all $n,k\in\N$, where we have used that $|\alpha|+|\beta|=m$ and $s_k=m+\sigma_k$. 

\smallskip
\textit{-Estimate for $J_2$.} If $|\beta|>0$, then $J_2\equiv 0$ by \eqref{eq:prop_phi_n}. Thus, let $|\beta|=0$. Then $|\alpha|=m$ and, by \eqref{eq:prop_phi_n},
\begin{align*}
J_2=c_{N,\sigma_k}\int_{\R^{N}\setminus B_{2n}}\int_{B_n}|\partial^\alpha w_{k}(y)|^2 |x-y|^{-N-2\sigma_k}\d{x}\d{y}. 
\end{align*}
By Fubini's theorem and H\"older's inequality,
\begin{equation}\label{eq:J2_fub_hol}
J_2\leq \int_{B_n} \left(\int_{\R^N\setminus B_{2n}}|\partial^\alpha w_{k}(y)|^{\dsa{\sigma_k}}\d{y}\right)^{\frac{2}{\dsa{\sigma_k}}} \left(c_{N,\sigma_k}\int_{\R^N\setminus B_{2n}} |x-y|^{-(N+2\sigma_k)\frac{N}{2\sigma_k}}\d{y}\right)^{\frac{2\sigma_k}{N}} \d{x}.
\end{equation}
Since $B_n(x)\subset B_{2n}$ for every $x\in B_n$ and $s_k-|\alpha|=\sigma_k$, then
\begin{equation*}
c_{N,\sigma_k}\int_{\R^N\setminus B_{2n}} |x-y|^{-(N+2\sigma_k)\frac{N}{2\sigma_k }} \d{y}
\leq c_{N,\sigma_k}\int_{\R^N\setminus B_{n}(x)} |x-y|^{-(N+2\sigma_k)\frac{N}{2\sigma_k }} \d{y} \leq C n^{-\frac{N^2}{2\sigma_k}}.
\end{equation*}
By putting this estimate in \eqref{eq:J2_fub_hol}, we obtain, as before, that, for all $n,k\in\N$,
\begin{equation*}
J_2\leq C n^{-N} \left(\int_{\R^N\setminus B_{2n}}|\partial^\alpha w_{k}(y)|^{\dsa{\sigma_k}}\d{y}\right)^{\frac{2}{\dsa{\sigma_k}}} \left(\int_{B_n}1 \d{x}\right) \leq  C \left(\int_{\R^N}|\partial^\alpha w_{k}(y)|^{\dsa{\sigma_k}}\d{y}\right)^{\frac{2}{\dsa{\sigma_k}}}<C. 
\end{equation*}

\smallskip
\textit{-Estimate for $J_3$.} A straightforward computation yields that
\begin{align*}
J_3&\leq c_{N,\sigma_k} \int_{B_{2n}\setminus B_n} \int_{B_{2n}\setminus B_n} \frac{|\partial^\beta(x) \varphi_n(x)-\partial^\beta \varphi_n(y)|^2}{|x-y|^{N+2\sigma_k}} |\partial^\alpha w_{k}(x)|^2\d{x}\d{y} \\
&\quad + c_{N,\sigma_k}  \int_{B_{2n}\setminus B_n} \int_{B_{2n}\setminus B_n} \frac{|\partial^\alpha w_{k}(x)-\partial^\alpha w_{k}(y)|^2}{|x-y|^{N+2\sigma_k}} | \partial^\beta \varphi_n(y)|^2\d{x}\d{y}.
\end{align*}
By using the mean value theorem in the first integral and \eqref{eq:prop_phi_n} in the second one, we have that
\begin{align*}
J_3&\leq \frac{c_{N,\sigma_k}}{n^{2(|\beta|+1)}} \int_{B_{2n}\setminus B_n} \int_{B_{2n}\setminus B_n} \frac{1}{|x-y|^{N-2(1-\sigma_k)}} |\partial^\alpha w_{k}(x)|^2\d{x}\d{y} \\
&\quad + \frac{c_{N,\sigma_k}}{n^{2|\beta|}}  \int_{B_{2n}\setminus B_n} \int_{B_{2n}\setminus B_n} \frac{|\partial^\alpha w_{k}(x)-\partial^\alpha w_{k}(y)|^2}{|x-y|^{N+2\sigma_k}} \d{x}\d{y}=: J_3^{(1)}+J_{3}^{(2)}.
\end{align*}
An argument similar to the estimate of $J_1$ yields that $J_3^{(1)}< C$ uniformly in $k$ and $n$, whereas $J_3^{(2)}\leq C\|\partial^\alpha w_k\|_{\sigma_k}<C$ for all $k,n\in\N$, by \eqref{eq:four_equiv}.

\smallskip
\textit{-Estimate for $J_4$.} If $|\beta|=0$, then $|\alpha|=m$ and, by \eqref{eq:prop_phi_n},
\begin{align*}
J_4 & \leq c_{N,\sigma_k}\int_{\R^N\setminus B_{2n}} \int_{B_{2n}\setminus B_n} \frac{|\partial^\alpha w_{k}(x)-\partial^\alpha w_{k}(y)|^2}{|x-y|^{N+2\sigma_k}}\d{x}\d{y}\\
&\quad + c_{N,\sigma_k} \int_{\R^N\setminus B_{2n}}\int_{B_{2n}\setminus B_n} \frac{|\varphi_n(x)-\varphi_n(y)|^2}{|x-y|^{N+2\sigma_k}}|\partial^{\alpha} w_k(x)|^2\d{x}\d{y}=: J_4^{(1)}+J^{(2)}_4.
\end{align*}
Note that $J_4^{(1)}\leq C\|\partial^\alpha w_k\|_{\sigma_k}<C$ for all $k,n\in\N$ as in the previous step. On the other hand, by the mean value theorem,
\begin{align*}
J_4^{(2)}&
\leq  {C c_{N,\sigma_k}} \int_{\R^N\setminus B_{3n}}\int_{B_{2n}\setminus B_n}\frac{ |\partial^\alpha w_{k}(x)|^2}{|x-y|^{N+2\sigma_k}}\d{x}\d{y}
 + \frac{C c_{N,\sigma_k}}{n^{2}}\int_{B_{3n}\setminus B_{2n}}\int_{B_{2n}\setminus B_n} \frac{|\partial^\alpha w_{k}(x)|^2}{|x-y|^{N-2(1-\sigma_k)}}\d{x}\d{y}
\end{align*}
and the uniform bounds follow as in the estimates for $J_2$ and $J_1$, respectively.

If $|\beta|>0$, we see that
\begin{equation}
J_4=c_{N,\sigma_k}\int_{\R^N\setminus B_{2n}}\int_{B_{2n}\setminus B_n}\frac{|\partial^\beta \varphi_n(x)-\partial^\beta \varphi_n(y)|^2}{|x-y|^{N+2\sigma_k}} |\partial^\alpha w_k(x)|^2\d{x}\d{y},
\end{equation}
and the uniform bound follows as in the estimate for $J_2$.

\smallskip
\textit{-Estimate for $J_5$.} If $|\beta|>0$, then $J_5=0$. If $|\beta|>0$, then, by \eqref{eq:four_equiv},  for all $n,k\in\N$,
\[
J_5=\frac{c_{N,\sigma_k}}{2}\int_{\R^N\setminus B_{2n}}\int_{\R^N\setminus B_{2n}} \frac{|\partial^\alpha w_k(x)-\partial^\alpha w_{k}(y)|^2}{|x-y|^{N+2\sigma_k}}\d{x}\d{y}\leq \|\partial^\alpha w_k\|_{\sigma_k}<C.
\]

\smallskip
The proof is then finished since $J_6\equiv 0$ by \eqref{eq:prop_phi_n}.
\end{proof}

\begin{proof} [Proof of Lemma \ref{GV}]
We show the case $m\in \N$ even and $(\sigma_k)_{k\in\N}\in(0,1)$. The other cases follow similarly.  Since $m$ is even, 
\begin{equation}
(-\Delta)^{{\frac{m}{2}}}(uv)= (-\Delta)^{\frac{m}{2}} u\,v +\sum_{\substack{\{ \alpha,\beta: |\alpha|+|\beta|={m}, \\ |\alpha|<{m} \} }} \mu_{\alpha,\beta} \partial^{\alpha} u \, \partial^\beta v,\quad \text{for $u,v\in H^m(\R^N)$}
\end{equation}
and for suitable coefficients $\mu_{\alpha,\beta}\in\mathbb N_0$. Then, for $s_k=m+\sigma_k$ and $w_{k}\in D^{s_k}(\R^N)$,
\begin{equation}\label{eq:first_split}
\norme{w_{k}\varphi_n}_{s_k}= \norme{(-\Delta)^{\frac{m}{2}}(w_{k}\varphi_n)}_{\sigma_k} \leq \norme{(-\Delta)^{\frac{m}{2}} w_{k} \varphi_n}_{\sigma_k} + C \sum_{\substack{\{ \alpha,\beta: |\alpha|+|\beta|={m}, \\ |\alpha|<{m} \} }}  \norme{ \partial^{\alpha} w_{k} \, \partial^\beta \varphi_n}_{\sigma_k},
\end{equation}
for some $C=C(m,N)>0$, where $\varphi_n$ is given in \eqref{eq:prop_phi_n}.  The claim now follows from \eqref{eq:first_split} and \Cref{lem:est_gen}.
\end{proof}

\renewcommand{\abstractname}{Acknowledgements}
\begin{abstract}
\end{abstract}
\vspace{-0.5cm}

V. Hernández-Santamaría is supported by the program ``Estancias posdoctorales por M\'exico'' of CONACyT, Mexico.  A. Saldaña is supported by UNAM-DGAPA-PAPIIT grant IA101721, Mexico.  We also thank Prof. Mónica Clapp (IMUNAM) for fruitful discussions. 

\bibliographystyle{plain}

\end{document}